\theoremstyle{plain}
\newtheorem{definition}{Definition}
\newtheorem{lemma}{Lemma}
\newtheorem{proposition}{Proposition}
\newtheorem{remark}{Remark}
\newtheorem{theorem}{Theorem}
\numberwithin{equation}{section}
\begin{document}
\title[The second Yamab\'{e} invariant with singularities]{The second Yamab%
\'{e} invariant with singularities}
\author{Mohammed Benalili}
\email{m\_benalili@mail.univ-tlemcen.dz}
\author{Hichem Boughazi}
\curraddr{Universit\'{e} Aboubekr Belka\"{\i}d, Faculty of Sciences, Dept.
of Math. B.P. 119, Tlemcen, Algeria.}
\subjclass{[]}
\keywords{Second Yamabe invariant, singularities, Critical Sobolev growth.}
\dedicatory{Dedicated to the memory of T. Aubin.}

\begin{abstract}
Let $(M,g)$\ be a compact Riemannian manifold of dimension $n\geq 3$.We
suppose that $\ g$\ \ is a metric in the Sobolev space $H_{2}^{p}(M,T^{\ast
}M\otimes T^{\ast }M)$\ with $\ p>\frac{n}{2}$ and there exist a point $\
P\in M$\ \ and $\ \delta >0$\ such that $g$\ \ is smooth in the ball $\
B_{p}(\delta )$. We define the second Yamabe invariant with singularies as
the infimum of the second eigenvalue of \ the singularYamabe operator over a
generalized class of conformal metric to $g$ and of volume $1$. We show that
this operator is attained by a generalized metric, we deduce nodal solutions
to a Yamabe type equation with singularities.
\end{abstract}

\maketitle

\begin{center}
\bigskip 

\bigskip
\end{center}

\section{Introduction}

Let $(M,g)$\ be a compact Riemannian manifold of dimension $n\geq 3$. The
problem of finding a metric conformal to the original one with constant
scalar curvature was first formulated by Yamabe (\cite{9}) and a variational
method was initiated by this latter in an attempt to solve the problem.
Unfortunately or fortunately a serious gap in the Yamabe was pointed out by
Trudinger who addressed the question in the case of non positive scalar
curvature ( \cite{8} ). Aubin (\cite{2}) solved the problem for arbitrary
non locally conformally flat manifolds of dimension $\ n\geq 6$. Finally
Shoen (\cite{7}) solved completely the problem using the positive-mass
theorem founded previously by Shoen himself and Yau . The method to solve
the Yamabe problem could be described as follows: let $u$ be a smooth
positive function and let $\overline{g}=u^{N-2}g$\ \ be a conformal metric
where $\ N=2n/(n-2)$. Up to a multiplying constant, the following equation
is satisfied 
\begin{equation*}
L_{g}(u)=S_{\tilde{g}}|u|^{N-2}u
\end{equation*}%
\ where $\ $%
\begin{equation*}
L_{g}\ =\frac{4(n-1)}{n-2}\Delta +S_{g}\ 
\end{equation*}%
and $S_{g}$ denotes the scalar curvature of $g$. $L_{g}$\ is conformally
invariant called the conformal operator. Consequently, solving the Yamabe
problem is equivalent to find a smooth positive solution to the equation

\begin{equation}
L_{g}(u)=ku^{N-1}  \tag{1}  \label{1}
\end{equation}%
$\ $where $k$\ \ is a constant.

In order to obtain solutions to this equation, Yamabe defined the quantity

\begin{equation*}
\mu
\left( M,g\right) =\underset{u\in C^{\infty }\left( M\right) ,\text{ }u>0}{%
\inf }Y(u)
\end{equation*}

where

\begin{equation*}
Y(u){\LARGE =}\frac{\int_{M}\left( \frac{4(n-1)}{n-2}|\nabla
u|^{2}+S_{g}u^{2}\right) dv_{g}}{{\Large (}\int_{M}|u|^{N}dv_{g}{\Large )}%
^{2/N}}\text{.}
\end{equation*}%
$%
\mu
(M,g)$\ is called the Yamabe invariant, and $Y$ the Yamabe functional. In
the sequel we write $\mu $ instead of $\mu \left( M,g\right) $. Writing the
Euler-Lagrange equation associated to $Y$, we see that there exists a one to
one correspondence between critical points of $Y$\ and solutions of equation
(\ref{1}). In particular, if $u$ is a positive smooth function such that $%
Y(u)=%
\mu
$, then $u$ is a solution of equation (\ref{1})\ and $\ \overline{g}%
=u^{(N-2)}g$\ \ is metric of constant scalar curvature. The key point to
solve the Yamabe problem is the following fundamental results due to Aubin ( 
\cite{2}). Let $S_{n}$\ be the unit euclidean sphere.

\begin{theorem}
Let $(M,g)$ be a compact Riemannian manifold of dimension $n\geq 3$. If $\ 
\mu
\left( M,g\right) <%
\mu
(S_{n})$, then there exists a positive smooth solution $u$ such that $Y(u)=%
\mu
\left( M,g\right) $.
\end{theorem}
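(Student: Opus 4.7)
The plan is to follow Aubin's subcritical regularization approach, which converts the loss-of-compactness problem at the critical Sobolev exponent $N=2n/(n-2)$ into a controlled limiting procedure. For each $q\in(2,N)$, introduce the subcritical functional
\begin{equation*}
Y_q(u)=\frac{\int_M\bigl(\tfrac{4(n-1)}{n-2}|\nabla u|^2+S_g u^2\bigr)dv_g}{\bigl(\int_M |u|^q\,dv_g\bigr)^{2/q}},
\end{equation*}
and let $\mu_q=\inf_{u>0}Y_q(u)$. Since the Sobolev embedding $H_1^2(M)\hookrightarrow L^q(M)$ is compact for $q<N$, a direct minimization argument (combined with the fact that $|\nabla|u||=|\nabla u|$ a.e.\ to replace a minimizer by its absolute value) yields a nonnegative minimizer $u_q\in H_1^2(M)$ with $\|u_q\|_q=1$ satisfying the Euler--Lagrange equation
\begin{equation*}
L_g u_q=\mu_q u_q^{q-1}.
\end{equation*}
Standard elliptic regularity bootstraps $u_q$ to $C^\infty$, and the strong maximum principle upgrades $u_q\geq 0$ to $u_q>0$.

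Next, one shows that $\mu_q\to\mu(M,g)$ as $q\to N$. The inequality $\mu_q\leq\mu(M,g)\,\mathrm{Vol}(M,g)^{(N-q)/(Nq)\cdot 2}$ (by Hölder, after testing against any admissible $u$) gives $\limsup\mu_q\leq\mu$; the reverse inequality follows by testing $Y_q$ against a near-minimizer of $Y$ and passing to the limit. The heart of the proof is to extract a subsequential limit $u_q\rightharpoonup u$ in $H_1^2(M)$ which is nontrivial, i.e.\ satisfies $\|u\|_N>0$. This is exactly the step where the hypothesis $\mu(M,g)<\mu(S_n)$ is used. Suppose for contradiction $u\equiv 0$. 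Using the sharp Sobolev inequality on $M$ in the form
\begin{equation*}
\Bigl(\int_M |v|^N dv_g\Bigr)^{2/N}\leq \bigl(K(n,2)^2+\varepsilon\bigr)\int_M|\nabla v|^2\,dv_g+A_\varepsilon\int_M v^2\,dv_g,
\end{equation*}
where $K(n,2)^2=\tfrac{4}{n(n-2)\omega_n^{2/n}}$ is related to $\mu(S_n)$ by $K(n,2)^{-2}=\tfrac{n-2}{4(n-1)}\mu(S_n)$, combined with $\int u_q^2\to 0$ (from $u\equiv 0$ and compactness of $H_1^2\hookrightarrow L^2$) and $\|u_q\|_q=1$, one deduces $\mu\geq \mu(S_n)$, contradicting the assumption. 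Hence $u\not\equiv 0$.

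Once $u$ is nontrivial, passing to the limit in the Euler--Lagrange equation gives $L_g u=\mu\,u^{N-1}$ weakly. Combined with the lower semicontinuity of the numerator and a Fatou-type argument on the denominator, one checks $Y(u)\leq\mu$, hence $Y(u)=\mu$ and $u$ is a minimizer. Elliptic regularity (together with the Trudinger--Brezis--Kato iteration that handles the critical nonlinearity $u^{N-1}$, now permissible because $u\in L^N$ globally) yields $u\in C^\infty(M)$, and the strong maximum principle applied to $L_g u=\mu u^{N-1}\geq 0$ (modulo the sign of $S_g$, handled by adding a large constant to both sides or by the Hopf lemma) gives $u>0$. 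The main obstacle throughout is the non-compactness at the critical exponent: ruling out concentration of $u_q$ at a point is precisely what the strict inequality $\mu(M,g)<\mu(S_n)$ provides, since concentration would force the local Sobolev quotient to approach $\mu(S_n)$.
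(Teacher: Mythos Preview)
Your proposal sketches the classical subcritical-approximation proof due to Aubin, and the outline is correct. However, the paper does not supply its own proof of this statement: Theorem~1 is quoted in the introduction as a foundational background result, attributed to Aubin~\cite{2}, and is used only to motivate the later work on the second invariant. There is therefore nothing in the paper to compare your argument against.

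That said, two minor points in your sketch deserve tightening. First, your justification of $\mu_q\to\mu$ conflates the two directions: testing $Y_q$ against a fixed $u$ and letting $q\to N$ gives only $\limsup_q\mu_q\le\mu$; the reverse inequality is not obtained by the same testing but rather emerges at the end, once you know the weak limit $u$ satisfies $Y(u)\le\liminf_q\mu_q$ and is admissible, forcing $\mu\le Y(u)\le\liminf_q\mu_q$. Second, the maximum-principle step is a bit loose: one does not apply Hopf directly to $L_g u=\mu u^{N-1}$ but rather rewrites the equation as $\Delta u + h u = 0$ with $h=\tfrac{n-2}{4(n-1)}S_g-\mu u^{N-2}$ and invokes the strong maximum principle for linear operators with bounded (or $L^p$) zeroth-order term, after regularity has placed $u$ in $L^\infty$. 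These are routine fixes and do not affect the validity of the overall strategy.
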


This strict inequality $%
\mu
\left( M,g\right) <%
\mu
(S_{n})$ avoids concentration phenomena. Explicitly $%
\mu
(S_{n})=n(n-1)\omega _{n}^{2/n}$\ where $\omega _{n}$\ stands for the volume
of $S_{n}$.

\begin{theorem}
Let $(M,g)$\ be a compact Riemannian manifold of dimension $n\geq 3$. Then 
\begin{equation*}
\mu
\left( M,g\right) \leq 
\mu
(S_{n})\text{.}
\end{equation*}%
\ Moreover, the equality holds if and only if $\ (M,g)$\ is conformally
diffeomorphic to the sphere $\ S_{n}$.
\end{theorem}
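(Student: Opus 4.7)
The plan is to prove the inequality $\mu(M,g) \leq \mu(S_{n})$ by testing the Yamabe functional on a family of functions concentrated at a point of $M$ that mimic the extremals of the sharp Euclidean Sobolev inequality; the equality case is then a separate rigidity statement.

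\textbf{Step 1 (test functions).} Fix any point $P\in M$ and use $g$-normal coordinates on a geodesic ball $B_{P}(\delta)$. Choose a smooth cutoff $\eta$ equal to $1$ on $B_{P}(\delta/2)$ and vanishing outside $B_{P}(\delta)$. For $\varepsilon>0$, set
$$u_{\varepsilon}(x) \;=\; \eta(x)\left(\frac{\varepsilon}{\varepsilon^{2}+r(x)^{2}}\right)^{(n-2)/2},$$
where $r(x)=d_{g}(P,x)$. In Euclidean space the un-cutoff versions $\phi_{\varepsilon}$ are (up to normalization) the Aubin--Talenti bubbles, which realize the sharp Sobolev constant $K(n,2)$ in the embedding $H_{1}^{2}(\mathbf{R}^{n})\hookrightarrow L^{N}(\mathbf{R}^{n})$.

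\textbf{Step 2 (asymptotic of $Y(u_{\varepsilon})$).} Using $\sqrt{\det g}=1+O(r^{2})$ in normal coordinates, the integrals $\int_{M}|\nabla u_{\varepsilon}|^{2}\,dv_{g}$ and $\int_{M}|u_{\varepsilon}|^{N}\,dv_{g}$ reduce to their Euclidean analogues up to explicit error terms; the truncation region $\delta/2<r<\delta$ contributes only $O(\varepsilon^{n-2})$. The scalar-curvature contribution $\int_{M}S_{g}u_{\varepsilon}^{2}\,dv_{g}$ is of order $\varepsilon^{2}$ for $n\geq 5$, $\varepsilon^{2}\log(1/\varepsilon)$ for $n=4$ and $\varepsilon$ for $n=3$, so after normalization by the denominator it is negligible. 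The surviving leading term is precisely the Aubin--Talenti value
$$\frac{4(n-1)}{n-2}\,\frac{\displaystyle\int_{\mathbf{R}^{n}}|\nabla\phi_{\varepsilon}|^{2}\,dx}{\bigl(\int_{\mathbf{R}^{n}}\phi_{\varepsilon}^{N}\,dx\bigr)^{2/N}} \;=\; n(n-1)\omega_{n}^{2/n} \;=\; \mu(S_{n}),$$
so $Y(u_{\varepsilon})\to \mu(S_{n})$ as $\varepsilon\to 0$, and hence $\mu(M,g)\leq \mu(S_{n})$.

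\textbf{Step 3 (rigidity).} One direction is immediate: if $(M,g)$ is conformally diffeomorphic to $S_{n}$, then $\mu(M,g)=\mu(S_{n})$ by conformal invariance of the Yamabe invariant. For the converse, assume $\mu(M,g)=\mu(S_{n})$ while $(M,g)$ is not conformally diffeomorphic to the round sphere. A refined expansion of $Y(u_{\varepsilon})$, using either the non-vanishing of the Weyl tensor at $P$ (Aubin's argument for $n\geq 6$ not locally conformally flat) or the positivity of the mass in the Green's function expansion of $L_{g}$ (Schoen's positive-mass argument for $n\in\{3,4,5\}$ and in the locally conformally flat case), produces a test function with $Y(u_{\varepsilon})<\mu(S_{n})$, contradicting the equality.

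\textbf{Main obstacle.} The inequality itself is a reasonably mechanical, if delicate, asymptotic computation. The true difficulty is the rigidity direction: ruling out equality on every manifold not conformally equivalent to $S_{n}$ requires a finer Taylor expansion of the metric at $P$ and a careful cancellation of the geometric correction terms, or else an appeal to the positive-mass theorem. For the purposes of the present paper, however, it is the upper bound $\mu(M,g)\leq\mu(S_{n})$ that will be exploited as the universal ceiling controlling the second Yamabe invariant in the singular setting.
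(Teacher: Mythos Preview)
Your proof sketch is correct and follows the classical Aubin--Schoen approach: test the Yamabe functional against truncated Aubin--Talenti bubbles to get the upper bound, then invoke the Weyl-tensor expansion (Aubin) or the positive-mass theorem (Schoen) for the rigidity direction.

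However, there is nothing to compare it to: the paper does \emph{not} prove this theorem. It is stated in the introduction as a classical background result, attributed to Aubin \cite{2}, alongside the companion Theorem~1 (that $\mu(M,g)<\mu(S_n)$ implies attainment). Both are quoted without proof in order to recall the architecture of the smooth Yamabe problem before turning to the singular setting. The paper's own contributions begin with the singular Yamabe invariant and the second Yamabe invariant with singularities; the classical Aubin inequality is simply invoked as known.

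So your write-up is fine as an account of the standard proof, but be aware that you are supplying a proof the paper deliberately omits as folklore. If anything, the point worth flagging is that in the singular context of this paper (metric $g\in H_2^p$, $p>n/2$), the analogous statement is Madani's Theorem~7 ($\mu<K^{-2}$ when $(M,g)$ is not conformal to $S_n$), and \emph{that} is where the hypothesis that $g$ is smooth on a ball $B_P(\delta)$ is actually used---precisely so that the test-function computation you outline can be carried out near $P$.
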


Amman and Humbert (\cite{1}) defined the second Yamabe invariant as the
infimum of the second eigenvalue of the Yamabe operator over the conformal
class of the metric $g$\ \ with volume $1$. Their method consists in
considering the spectrum of the operator $L_{g}$ 
\begin{equation*}
spec(L_{g})=\{\lambda _{1,g}\ ,\lambda _{2,g}\ldots \}
\end{equation*}%
\ where the eigenvalues $\lambda _{1,g}\ <\lambda _{2,g}\ldots $\ appear
with their multiplicities. The variational characterization of $\ \lambda
_{1,g}\ $\ is given by

\begin{equation*}
\lambda _{1,g}\ =\underset{u\in C^{\infty }\left( M\right) ,\text{ }u>0}{%
\inf }\frac{\int_{M}\left( \frac{4(n-1)}{n-2}|\nabla
u|^{2}+S_{g}u^{2}\right) dv_{g}}{\int_{M}u^{2}dv_{g}}\text{.}
\end{equation*}%
Let 
\begin{equation*}
\left[ g\right] =\{u^{N-2}g,\text{ }u\in C^{\infty }\left( M\right) ,\text{ }%
u>0\}\ 
\end{equation*}%
Then they defined $\ $the $k^{th}$\ Yamabe invariant with $\ k\in 
\mathbb{N}
^{\star }$, \ by

\begin{equation*}
\mu
_{k}=\underset{\overline{g}\in \left[ g\right] }{\inf }\lambda _{k,\overline{%
g}}Vol(M,\tilde{g})^{^{2/n}}\text{.}
\end{equation*}%
With these notations $\ 
\mu
_{1}$ is the Yamabe invariant. They studied the second Yamabe invariant $%
\mu
_{2}$, they found that contrary to the Yamabe invariant, $\ 
\mu
_{2}$\ cannot be attained by a regular metric. In other words, there does
not exist $\overline{g}\in \left[ g\right] $\ , such that%
\begin{equation*}
\mu
_{2}=\lambda _{2,\overline{g}}Vol(M,\tilde{g})^{^{2/n}}\text{.}
\end{equation*}%
\ In order to find minimizers, they enlarged the conformal class to a larger
one. A generalized metric is the one of the form $\ \overline{g}=u^{N-2}g$\
, which is not necessarily positive and smooth, but only $u\in L^{N}(M),$ $%
u\geq 0,u\neq 0$ and where $N=2n/\left( n-2\right) .$\ The definitions of $%
\lambda _{2,\overline{g}}$\ \ and of $\ Vol(M,\overline{g})^{^{2/n}}$can be
extended to generalized metrics. The key points to solve this problem is the
following theorems (\cite{1}).

\begin{theorem}
Let $(M,g)$\ be a compact Riemannian manifold of dimension $\ n\geq 3$,\
then $\ 
\mu
_{2}$\ is attained by a generalized metrics in the following cases.%
\begin{equation*}
\mu
>0\ ,\ 
\mu
_{2}<\left[ (%
\mu
^{n/2}+(%
\mu
(S_{n}))^{n/2}\right] ^{2/n}
\end{equation*}%
and%
\begin{equation*}
\mu
=0,\text{ }\ 
\mu
_{2}<\text{\ }%
\mu
(S_{n})
\end{equation*}

\begin{theorem}
The assumptions of the last theorem are satisfied in the following cases

If $\ (M,g)$\ in not locally conformally flat and, $\ n\geq 11$\ and $\ 
\mu
>0$

If $\ (M,g)$\ in not locally conformally flat and, $%
\mu
=0$\ and $\ n\geq 9$.
\end{theorem}
\end{theorem}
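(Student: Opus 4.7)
The plan is to follow the Aubin--Ammann--Humbert test function method: build a generalized metric whose conformal factor superposes a Yamabe minimizer with an Aubin-type bubble concentrated at a point of nonvanishing Weyl tensor, then estimate $\lambda_{2,\bar g}$ from above via the Courant--Fischer min-max over the two-dimensional span of the two components. Since $(M,g)$ is not locally conformally flat, pick $P\in M$ with $W_g(P)\neq 0$. In $g$-normal coordinates at $P$, set
$$v_{\epsilon}(x)=\eta(r)\left(\frac{\epsilon}{\epsilon^{2}+r^{2}}\right)^{(n-2)/2},$$
with $r=d_g(\cdot,P)$ and $\eta$ a smooth cutoff. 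When $\mu>0$, Aubin's non-LCF bound for $n\geq 6$ gives $\mu<\mu(S_n)$, so by Theorem~1 there exists a positive smooth Yamabe minimizer $u_0$ with $L_g u_0=\mu u_0^{N-1}$; when $\mu=0$, take $u_0$ to be any positive smooth solution of $L_g u_0=0$. Form the generalized conformal factor $u_{\epsilon}=\alpha u_0+\beta v_{\epsilon}$, with $\alpha,\beta>0$ chosen so that the $L^N$-masses of the two components realize the target split encoded by $\mu^{n/2}$ and $\mu(S_n)^{n/2}$.

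Next, test the second eigenvalue of $L_{\bar g_{\epsilon}}$ (where $\bar g_{\epsilon}=u_{\epsilon}^{N-2}g$) against $V_{\epsilon}:=\mathrm{span}(u_0,v_{\epsilon})$. Since $v_{\epsilon}$ concentrates at $P$, the cross-integrals
$$\int_M u_0 v_{\epsilon}\,u_{\epsilon}^{N-2}\,dv_g \quad\text{and}\quad \int_M\!\Bigl(\tfrac{4(n-1)}{n-2}\nabla u_0\cdot\nabla v_{\epsilon}+S_g u_0 v_{\epsilon}\Bigr)dv_g$$
are both $o(1)$, so the $2\times 2$ generalized eigenvalue problem on $V_{\epsilon}$ is diagonal to leading order with entries $\mu$ and $Y(v_{\epsilon})$. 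Combined with $\mathrm{Vol}(M,\bar g_{\epsilon})=\|\alpha u_0\|_{N}^{N}+\|\beta v_{\epsilon}\|_{N}^{N}+o(1)$ this yields
$$\lambda_{2,\bar g_{\epsilon}}\,\mathrm{Vol}(M,\bar g_{\epsilon})^{2/n}\leq\bigl[\mu^{n/2}+Y(v_{\epsilon})^{n/2}\bigr]^{2/n}+\mathcal E(\epsilon),$$
with $\mathcal E(\epsilon)$ quantifying the deviation from the diagonal approximation.

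By Aubin's classical expansion at a point with $W_g(P)\neq 0$ one has, for $n\geq 6$,
$$Y(v_{\epsilon})=\mu(S_n)-c_n|W_g(P)|^{2}\,\epsilon^{4}+o(\epsilon^{4}),\qquad c_n>0,$$
i.e.\ a strictly negative gain of order $\epsilon^{4}$. The dominant piece of $\mathcal E(\epsilon)$ scales like $\epsilon^{(n-2)/2}$ (coming from integrals of type $\int u_0 v_{\epsilon}^{N-1}\,dv_g$); the requirement $(n-2)/2>4$, i.e.\ $n\geq 11$, ensures the Weyl gain dominates and produces the strict inequality $\mu_{2}<[\mu^{n/2}+\mu(S_n)^{n/2}]^{2/n}$ of the first case. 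When $\mu=0$, only the weaker bound $\mu_{2}<\mu(S_n)$ is required, the $u_0$-component drops out of the main competition, and the dimensional threshold relaxes to $n\geq 9$. The main obstacle is the sharp bookkeeping of $\mathcal E(\epsilon)$: one must verify that the chosen normalization $(\alpha,\beta)$ produces exactly $[\mu^{n/2}+\mu(S_n)^{n/2}]^{2/n}$ as the leading right-hand side (and not a larger combination), and that all subleading cross terms are $o(\epsilon^{4})$ precisely at the announced dimensions; this is the technical core of the Ammann--Humbert test-function analysis, transplanted here to the smooth (non-singular) setting to which the hypotheses of the preceding theorem reduce.
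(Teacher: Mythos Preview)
Your proposal is correct and follows exactly the Ammann--Humbert test-function strategy that the paper invokes: the statement in question is quoted in the Introduction as a result from \cite{1} (the smooth setting), and the paper's own analogous assertion in the singular setting (Theorem~\ref{th8}) is ``proved'' only by the sentence ``With the same method as in \cite{1}''. Thus your outline of the superposition $u_\epsilon=\alpha u_0+\beta v_\epsilon$, the near-diagonal $2\times 2$ min-max computation, the Aubin expansion $Y(v_\epsilon)=\mu(S_n)-c_n|W_g(P)|^2\epsilon^4+o(\epsilon^4)$, and the dimensional thresholds arising from the cross-term $\int u_0 v_\epsilon^{N-1}\sim\epsilon^{(n-2)/2}$ is precisely the argument the paper is deferring to.
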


\begin{theorem}
Let $(M,g)$\ be a compact Riemannian manifold of dimension $n\geq 3$, assume
that $\ 
\mu
_{2}$\ is attained by a generalized metric $\ \tilde{g}=u^{N-2}g$\ then
there exist a nodal solution $w\in C^{2,\alpha }(M)$\ of equation 
\begin{equation*}
L_{g}(w)=%
\mu
_{2}|u|^{N-2}w\ 
\end{equation*}%
such that%
\begin{equation*}
\left\vert w\right\vert =u
\end{equation*}%
\ where $\ \alpha \leq N-2$.
\end{theorem}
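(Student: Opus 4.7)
The plan is to follow the approach of Amman--Humbert for the second Yamabe invariant, adapted to the low-regularity (only $H_2^p$ with $p>n/2$) setting considered here. By hypothesis $\tilde{g}=u^{N-2}g$ minimizes $\bar{g}\mapsto\lambda_{2,\bar{g}}\,\mathrm{Vol}(M,\bar{g})^{2/n}$ over the enlarged conformal class. Using the conformal covariance identity $L_{\tilde{g}}\varphi=u^{1-N}L_g(u\varphi)$, the generalized second eigenvalue problem for the singular Yamabe operator $L_{\tilde{g}}$ becomes, after the substitution $w=u\varphi$, the weighted eigenvalue problem
\begin{equation*}
L_g(w)=\lambda_{2,\tilde{g}}\,u^{N-2}w,
\end{equation*}
with $w\in H_1^2(M)$ orthogonal to the first generalized eigenfunction in $L^2(u^{N-2}\,dv_g)$. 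The min--max characterization
\begin{equation*}
\lambda_{2,\tilde{g}}=\inf_{\dim V=2}\,\sup_{w\in V\setminus\{0\}}\,\frac{\int_M\bigl(\tfrac{4(n-1)}{n-2}|\nabla w|^2+S_g w^2\bigr)\,dv_g}{\int_M w^2 u^{N-2}\,dv_g}
\end{equation*}
is meaningful for $u\in L^N$ and $w\in H_1^2$ by the Sobolev embedding and H\"older, and one checks that the infimum is attained.

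The heart of the argument is a first-variation computation in the conformal factor $u$. I would consider perturbations $u_t=u+t\eta$ with $\eta\in C^\infty(M)$ chosen so that $u_t\ge 0$ remains admissible, and require that the derivative at $t=0$ of $\lambda_{2,\tilde{g}_t}\mathrm{Vol}(M,\tilde{g}_t)^{2/n}$ vanish. Standard Kato-type perturbation for weighted eigenvalue problems yields an eigenvalue derivative involving $\int_M w^2 u^{N-3}\eta\,dv_g$, and the volume term contributes $\int_M u^{N-1}\eta\,dv_g$. After normalizing $\int_M w^2 u^{N-2}\,dv_g=1$, matching the two integrals for all admissible $\eta$ should force the pointwise identity $w^2=u^2$ on $\{u>0\}$. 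On $\{u=0\}$ the equation reduces to $L_g(w)=0$, and a strong maximum/unique-continuation argument (using that $g$ is smooth on $B_P(\delta)$) then forces $w=0$ there as well, so that $|w|=u$ globally.

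Substituting $u=|w|$, the equation becomes $L_g(w)=\mu_2|w|^{N-2}w$, a semilinear equation at the critical Sobolev exponent. Bootstrapping then proceeds as usual: a Moser/Trudinger iteration at the critical exponent promotes $w\in H_1^2$ to $w\in L^q$ for every finite $q$; subsequently the interior regularity theory for $L_g$ with coefficients coming from a metric $g\in H_2^p$, $p>n/2$, yields $w\in C^{0,\alpha}$ and finally $w\in C^{2,\alpha}$. The H\"older exponent is constrained by the regularity of $S_g$ and of the leading coefficients of $L_g$, combined with the exponent of the nonlinearity, which accounts for the bound $\alpha\le N-2$. That $w$ actually changes sign is immediate: being $L^2(u^{N-2}\,dv_g)$-orthogonal to the (everywhere positive) first generalized eigenfunction, $w$ cannot have constant sign.

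The step I expect to be most delicate is the extraction of $|w|=u$. Two issues need real care. First, $u$ may vanish on a set of positive measure, which restricts the admissible perturbations $\eta$ and makes the passage from an integral identity to a pointwise identity nontrivial. Second, if $\mu_2$ is a multiple eigenvalue, the map $u\mapsto\lambda_{2,\tilde{g}}$ is only subdifferentiable, so one has to replace the classical derivative by a one-sided Clarke-type derivative and select $w$ inside the eigenspace so as to produce the correct variational inequality on each side. These complications, compounded with the limited $H_2^p$ regularity of the background metric (which forces one to use the singular analogues of Schauder estimates and of Moser iteration developed earlier in the paper), are where the bulk of the technical effort will be concentrated.
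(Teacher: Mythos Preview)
This particular statement is quoted from Ammann--Humbert and is not proved in the present paper; the paper proves its singular analogue (the final theorem, with conclusion $u=aw_{+}+bw_{-}$ and $w\in C^{1-[n/p],\beta}$), following the method of \cite{1}. That method is different from your first-variation approach. Rather than differentiating $u\mapsto\lambda_{2,\tilde g}\,\mathrm{Vol}^{2/n}$ along $u_{t}=u+t\eta$, one argues via a competitor construction encoded in Lemma~\ref{lem9}: the second eigenfunction $w$ has $w_{+},w_{-}$ with disjoint supports, each satisfying $\int w_{\pm}L_{g}w_{\pm}\le\mu_{2}\int u^{N-2}w_{\pm}^{2}$; the lemma (proved in \cite{1} by building an explicit test metric from $w_{+},w_{-}$ and plugging $\mathrm{span}(w_{+},w_{-})$ into the min--max) then forces $u=aw_{+}+bw_{-}$ with equality in both inequalities. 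From there $u\in L^{\infty}$ by Lemma~\ref{lem7}, and the linear regularity theorem gives the asserted smoothness of $w$. Nodality comes from first showing $\mu_{2}>\mu_{1}$, so that a signed $w$ would contradict positivity of the ground state.

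Your perturbation route is plausible in spirit but contains a genuine gap. The step ``on $\{u=0\}$ the equation reduces to $L_{g}w=0$, and a strong maximum/unique-continuation argument forces $w=0$ there'' does not work as stated: unique continuation says that $w$ vanishing on an open set forces $w\equiv 0$, not that $L_{g}w=0$ on a set forces $w=0$ on that set; and a maximum-principle argument would require boundary information and regularity of $\{u=0\}$ that you do not have for $u\in L^{N}$. So your scheme does not deliver $|w|=u$ on $\{u=0\}$. The competitor argument bypasses this entirely, since $u=aw_{+}+bw_{-}$ gives $\{u=0\}=\{w=0\}$ for free. It also sidesteps the eigenvalue-multiplicity and one-sided-variation difficulties you correctly flagged, because no differentiability of $u\mapsto\lambda_{2}$ is needed: one simply compares $\mu_{2}$ to the value achieved by an explicit competitor and reads off the equality case. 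Note finally that even in \cite{1} the raw output is $u=aw_{+}+bw_{-}$; the form $|w|=u$ is a normalization, not a direct consequence of the variation you wrote down.
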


\bigskip

In (\cite{5}), recently F.Madani studied the Yamabe problem with
singularities when the metric $g$ admits a finite number of points with
singularities and smooth outside these points. Let $(M,g)$\ be a compact
Riemannian manifold of dimension $n\geq 3$, assume that $g$\textit{\ is a
metric in the Sobolev space }$H_{2}^{p}(M,T^{\ast }M\otimes T^{\ast }M)$%
\textit{\ with }$\ p>\frac{n}{2}$\textit{\ and there exist a point }$P\in M$%
\textit{\ and }$\delta >0$\textit{\ such that }$g$\textit{\ \ is smooth in
the ball }$B_{p}(\delta )$\textit{, and let }$(H)$\textit{\ be these
assumptions}. By Sobolev's embedding, we have for $p>\frac{n}{2}$, $%
H_{2}^{p}(M,T^{\ast }M\otimes T^{\ast }M)\subset C^{1-\left[ n/p\right]
,\beta }\left( M,T^{\ast }M\otimes T^{\ast }M\right) $, where $\left[ n/p%
\right] $ denotes the entire part of $n/p$. Hence the metric satisfying
assumption $(H)$ is of class $C^{1-\left[ \frac{n}{p}\right] ,\beta }$ with $%
\beta \in \left( 0,1\right) $ provided that $p>n$. The Christoffels symbols
belong to $\ H_{1}^{p}\left( M\right) $ ( to $C^{o}\left( M\right) $ in case 
$p>n$), the Riemannian curvature tensor, the Ricci tensor and scalar
curvature are in $L^{p}(M)$. F. Madani proved under the assumption ($H$) the
existence of a metric $\overline{g}=u^{N-2}g$\ \ conformal to $\ g$\ \ such
that $\ u\in H_{2}^{p}(M)$, $u>0$\ and the scalar curvature $\ S_{\overline{g%
}}$\ \ of $\ \overline{g}$\ \ is constant and $(M,g)$\ \ is not conformal to
the round sphere. Madani proceeded as follows: let $\ $\ $u\in H_{2}^{p}(M)$%
, $u>0$\ be a function and $\overline{g}=u^{N-2}g$\ a particular conformal
metric where $N=2n/(n-2)$.Then, multiplying$\ u$\ by a constant, the
following equation is satisfied

\begin{equation*}
L_{g}u=\frac{n-2}{4(n-1)}S_{\tilde{g}}|u|^{N-2}u
\end{equation*}%
where 
\begin{equation*}
L_{g}\ =\Delta _{g}+\frac{n-2}{4(n-1)}S_{g}
\end{equation*}%
and the scalar curvature $S_{g}$ is in $\ L^{p}(M)$. Moreover $L_{g}$\ is
weakly conformally invariant hence solving the singular Yamabe problem is
equivalent to find a positive solution $u\in H_{2}^{p}(M)$\ \ of \ 
\begin{equation}
L_{g}u=k|u|^{N-2}u\text{ \ \ \ \ \ \ \ }  \tag{2}  \label{2}
\end{equation}%
where $k$ \ is a constant. In order to obtain solutions of equation (\ref{2}%
) we define the quantity

\begin{equation*}
\mu
=\underset{u\in H_{2}^{p}\left( M\right) ,\ \ u>0}{\inf }Y(u)
\end{equation*}%
where

\begin{equation*}
Y(u)=\frac{\int_{M}\left( |\nabla u|^{2}+\frac{(n-2)}{4(n-1)}%
S_{g}u^{2}\right) dv_{g}}{(\int_{M}|u|^{N}dv_{g})^{2/N}}\text{.}
\end{equation*}%
$\ 
\mu
$\ is called Yamabe invariant with singularities. Writing the Euler-Lagrange
equation associated to $Y$ , we see that there exists a one to one
correspondence between critical points of $Y$\ \ and solutions of equation (%
\ref{2}). In particular, if $\ u\in H_{2}^{p}$\ \ is a positive function
which minimizes $Y$, then $\ u$\ \ is a solution of equation(\ref{2})\ and $%
\ \overline{g}=u^{N-2}g$\ is a metric of constant scalar curvature and $%
\mu
$ is attained by a particular conformal metric. The key points to solve the
above problem are the following theorems (\cite{5}).

\begin{theorem}
If $p>n/2$\ and $\ 
\mu
<K^{-2}$then equation\ref{2} admits a positive solution $u\in
H_{2}^{p}\left( M\right) \subset C^{1-[n/p],\beta }(M)$\ ; $[n/p]$\ is the
integer part of $\ n/p$, $\beta \in (0,1)$\ which minimizes $Y$, where $%
K^{2}=\frac{4}{n(n-1)}\omega _{n}^{-2/n}$ with $\omega _{n}$\ denotes the
volume of $S_{n}$. \ If $p>n$ , then $u\in H_{2}^{p}\left( M\right) \subset
C^{1}\left( M\right) $.
\end{theorem}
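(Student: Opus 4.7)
The plan is to solve the minimization problem for $\mu$ by the subcritical approximation method of Yamabe--Aubin--Trudinger, adapted to the $H_2^p$-singular setting as pursued by Madani. For each $q \in [2, N)$, introduce
\begin{equation*}
Y_q(u) = \int_M\Bigl(|\nabla u|^2 + \tfrac{n-2}{4(n-1)} S_g u^2\Bigr) dv_g, \qquad \mu_q = \inf\{Y_q(u) : u \in H_1^2(M),\ u\ge 0,\ \|u\|_q = 1\}.
\end{equation*}
Since $p > n/2$ yields $S_g \in L^p(M) \subset L^{n/2}(M)$, the quadratic form $Y_q$ is coercive on $H_1^2$ after adding $C\int u^2$. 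Because the embedding $H_1^2 \hookrightarrow L^q$ is compact for $q < N$, the direct method gives a nonnegative minimizer $u_q \in H_1^2$ satisfying the subcritical Euler--Lagrange equation $L_g u_q = \mu_q u_q^{q-1}$, and a standard subcritical bootstrap together with the maximum principle shows $u_q \in H_2^p(M)$ with $u_q > 0$.

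Next I would let $q \nearrow N$. Comparing $Y_q$ with $Y$ shows $\mu_q \to \mu$, and the normalization $\|u_q\|_q = 1$ combined with $Y_q(u_q) = \mu_q$ yields a uniform $H_1^2$ bound on $(u_q)$. Extracting a weak limit $u_q \rightharpoonup u \ge 0$ in $H_1^2$ (strong in $L^2$, pointwise a.e.), the hypothesis $\mu < K^{-2}$ enters through the sharp Aubin--Sobolev inequality
\begin{equation*}
\Bigl(\int_M |v|^N dv_g\Bigr)^{2/N} \le (K^2 + \varepsilon)\int_M|\nabla v|^2\, dv_g + C_\varepsilon \int_M v^2\, dv_g,
\end{equation*}
which for sufficiently small $\varepsilon$ forbids concentration of the $L^N$-mass of $u_q$ at any single point: any bubble would carry mass $\ge K^{-n}$ and contribute energy $\ge K^{-2}$, contradicting $\mu < K^{-2}$. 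Hence $u_q \to u$ strongly in $L^N$ with $\|u\|_N = 1$, so $u$ is a nontrivial minimizer of $Y$ and a weak solution of $L_g u = \mu\, u^{N-1}$.

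The main obstacle is regularity, since the metric is only $H_2^p$, so neither Schauder estimates nor $C^\infty$-coefficient theory apply. I would first run a Moser-type iteration on $L_g u = \mu u^{N-1}$, exploiting the hypothesis $p > n/2$ (the threshold compatible with De Giorgi--Nash--Moser--Trudinger) to prove $u \in L^\infty(M)$. Once $u$ is bounded, the right-hand side $\mu u^{N-1} - \tfrac{n-2}{4(n-1)} S_g u$ lies in $L^p(M)$, and the $L^p$-elliptic theory for $\Delta_g$ with an $H_2^p$-metric (as developed in \cite{5}) upgrades $u$ to $H_2^p(M)$. The Sobolev embedding $H_2^p \hookrightarrow C^{1-[n/p], \beta}$, and in particular $H_2^p \hookrightarrow C^1$ when $p > n$, then gives the stated Hölder regularity. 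Finally, the strong maximum principle for $H_2^p$-solutions (valid under $p > n/2$) promotes $u \ge 0$, $u \not\equiv 0$ to $u > 0$ on $M$, and by construction $u$ minimizes $Y$.
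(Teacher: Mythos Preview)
The paper does not prove this theorem; it is quoted as one of Madani's results from \cite{5} (see the sentence ``The key points to solve the above problem are the following theorems (\cite{5})'' immediately preceding it). Your outline is the classical Yamabe--Trudinger--Aubin subcritical scheme, and this is indeed the route Madani follows in \cite{5}, so your proposal is correct and matches the intended argument.

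One small point worth tightening: in the regularity step you invoke a ``Moser-type iteration'' to reach $u\in L^\infty$, but because the right-hand side $\mu u^{N-1}$ is critical, the naive Moser iteration does not start. One first needs a Brezis--Kato / Trudinger argument to gain $u\in L^{N+\varepsilon}$ for some $\varepsilon>0$ (this is exactly the content of the paper's Lemma~\ref{lem7}, also taken from \cite{5}); only then does the bootstrap run and yield $u\in L^s$ for all $s$, after which your $L^p$-elliptic step and Sobolev embedding give $u\in H_2^p\subset C^{1-[n/p],\beta}$.
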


\begin{theorem}
Let $(M,g)$ be a compact Riemannian manifold of dimension $n\geq 3$. $\ g$\
is a metric which satisfies the assumption ($H$). If $(M,g)$\ is not
conformal to the sphere $S_{n}$ with the standard Riemannian structure then%
\begin{equation*}
\mu
<K^{-2}
\end{equation*}
\end{theorem}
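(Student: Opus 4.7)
The plan is to produce an explicit family of test functions $u_\epsilon$, concentrated at the smooth point $P$, whose Yamabe quotient satisfies $Y(u_\epsilon) < K^{-2}$ for some $\epsilon > 0$; this immediately gives $\mu \leq Y(u_\epsilon) < K^{-2}$. The crucial feature of hypothesis $(H)$ is that $g$ is $C^\infty$ on $B_P(\delta)$, so a local test-function construction at $P$ behaves exactly as in the classical (smooth-metric) theory of Aubin and Schoen. I would fix geodesic normal coordinates at $P$ in $B_P(\delta_0)$ with $\delta_0 < \delta$, choose a smooth cutoff $\eta$ compactly supported in $B_P(\delta_0)$ and equal to $1$ on $B_P(\delta_0/2)$, and consider the Aubin bubbles
\begin{equation*}
u_\epsilon(x) \;=\; \eta(x)\left(\frac{\epsilon}{\epsilon^2 + r(x)^2}\right)^{(n-2)/2},
\end{equation*}
where $r(x) = d_g(x,P)$. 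These are smooth and compactly supported in the smooth region, hence they lie in $H_2^p(M)$ and are admissible for the infimum defining $\mu$.

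Expanding $Y(u_\epsilon)$ by the rescaling $y = x/\epsilon$, the leading term is $K^{-2}$ (the sphere value), and the first correction is a local integral against the Taylor expansion of $g$ at $P$. The analysis then splits into two sub-cases. If $n \geq 6$ and the Weyl tensor $W$ does not vanish identically on $B_P(\delta)$, recentering the bubbles at a point $P' \in B_P(\delta)$ with $W(P') \neq 0$ and applying the Aubin expansion yields $Y(u_\epsilon) = K^{-2} - c_n\,|W(P')|^2\,\epsilon^4 + o(\epsilon^4)$ (with an extra $\log(1/\epsilon)$ factor if $n=6$), and strict inequality follows. If $n \in \{3,4,5\}$, or if $W$ vanishes throughout $B_P(\delta)$, one adapts Schoen's argument: let $G$ be the Green's function of the (singular) conformal Laplacian $L_g$ with pole at $P$, whose expansion in conformal normal coordinates takes the form $G(x) = r^{2-n} + A + o(1)$ near $P$, and plug the refined test function $\eta\,v_\epsilon + \epsilon^{(n-2)/2}\bigl(G - \eta\, r^{2-n}\bigr)$ into $Y$. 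The computation produces $Y(u_\epsilon) = K^{-2} - c_n\,A\,\epsilon^{n-2} + o(\epsilon^{n-2})$, so $A > 0$ yields the required strict inequality.

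The main obstacle is the second sub-case, which requires three non-trivial ingredients in the singular category: (i) existence, regularity, and the stated asymptotic expansion at $P$ of the Green's function for $L_g$ with scalar curvature in $L^p$, $p > n/2$, which follows from standard elliptic theory for operators with $L^p$ zero-order coefficients together with the smoothness of $g$ on $B_P(\delta)$; (ii) a version of the positive mass theorem valid for an asymptotically flat manifold that is smooth on the relevant end but only $H_2^p$-regular elsewhere, available through the low-regularity refinements of Miao and Lee--LeFloch; and (iii) the rigidity statement that $A > 0$ unless $(M,g)$ is conformally diffeomorphic to $S_n$, which is exactly where the hypothesis ``$(M,g)$ not conformal to the sphere'' enters. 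Once these are in place, choosing $\epsilon$ small gives $Y(u_\epsilon) < K^{-2}$, and hence $\mu < K^{-2}$.
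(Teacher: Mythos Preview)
The paper does not supply its own proof of this theorem; it is quoted from Madani \cite{5} as a background result on which the rest of the paper relies. Your outline is the natural Aubin--Schoen strategy adapted to hypothesis $(H)$, and this is indeed how the result is established in that reference: since $g$ is smooth on $B_P(\delta)$, the local bubble computations centered at any $P'\in B_P(\delta)$ are identical to the classical smooth case, and Aubin's expansion with the correction $-c_n|W(P')|^2\varepsilon^4$ (or the $\varepsilon^4\log(1/\varepsilon)$ variant when $n=6$) handles the sub-case $n\ge 6$ with $W\not\equiv 0$ on $B_P(\delta)$. Your case split---falling back to the Schoen argument when $n\le 5$ or when $W$ vanishes throughout the smooth ball---is also the correct dichotomy, since one cannot recentre the bubbles outside the region where $g$ is known to be smooth.

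The one place where your sketch is optimistic is item (ii). The low-regularity positive mass theorems you invoke (Miao, Lee--LeFloch) carry specific structural hypotheses (metrics $C^0$ across a hypersurface, or $W^{1,p}$ with $p>n$, together with distributional nonnegativity of scalar curvature), and it is not automatic that the stereographic blow-up $(M\setminus\{P\},\,G^{4/(n-2)}g)$ of an $H_2^p$ metric with $p>n/2$ fits those frameworks; in particular the asymptotically flat end is smooth here, but the compact part carries only $H_2^p$ regularity, and the rigidity clause (your item (iii)) is the most sensitive to this. You have correctly identified this as the crux, but the sketch as written does not close the gap; for the actual argument one must consult \cite{5}, where the passage from $H_2^p$ regularity to the classical positive mass statement is carried out.
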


\begin{theorem}
\label{6} (\cite{5}) On an n -dimensional compact Riemannian manifold $(M,g)$%
, if $u\geq 0$\ is a non trivial weak solution in $\ H_{1}^{2}\left(
M\right) $ of equation $\ \Delta u+hu=0$, with $\ h\in L^{p}(M$) \ and $\
p>n/2$, then $u\in C^{1-[n/p],\beta }$\ and $u>0$; $[n/p]$\ is the integer
part of $\ n/p$\ and $\beta \in (0,1)$.
\end{theorem}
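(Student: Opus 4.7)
\textbf{Proof plan for Theorem \ref{6}.}
The strategy is a standard bootstrap argument: first upgrade the integrability of $u$ to $L^{\infty}$ via Moser iteration, then invoke linear elliptic $L^{p}$ regularity to land in $H_{2}^{p}(M)$, Sobolev-embed into the claimed Hölder space, and finally apply a strong maximum principle valid for coefficients in $L^{p}$, $p>n/2$.

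First I would localise in a coordinate chart so that the Laplace--Beltrami operator differs from the Euclidean Laplacian by lower order terms with bounded coefficients, reducing the problem to a Euclidean-type equation $-\Delta u = h u$ on a ball. Starting from the given regularity $u\in H_{1}^{2}(M)\subset L^{2n/(n-2)}(M)$, I would run a De Giorgi--Moser iteration: insert test functions of the form $\eta^{2}u^{\,2\beta+1}$ with a suitable cut-off $\eta$, apply Hölder's inequality to $h\in L^{p}$ against $u^{2\beta+2}$, and use the Sobolev inequality to gain a factor $n/(n-2)$ at each step. Because $p>n/2$, the Hölder pairing
\begin{equation*}
\int h\,u^{2\beta+2}\eta^{2}\,dv_{g}\le \|h\|_{L^{p}}\bigl(\int (u^{\beta+1}\eta)^{2n/(n-2)}\bigr)^{\!(n-2)/n}\bigl(\int u^{2n/(n-2)}\bigr)^{\!\theta}
\end{equation*}
leaves a strictly positive exponent on the left-hand Sobolev term, so the iteration closes and gives $u\in L^{\infty}_{\text{loc}}$, and then, by a covering argument, $u\in L^{\infty}(M)$.

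With $u\in L^{\infty}$ in hand, $hu\in L^{p}(M)$, so the weak equation $\Delta u=-hu$ together with Calderón--Zygmund/Schauder-type estimates for the Laplacian on $(M,g)$ (whose metric regularity $H_{2}^{p}$ with $p>n/2$ is enough for the standard $L^{p}$ theory to apply) yields $u\in H_{2}^{p}(M)$. The Sobolev embedding $H_{2}^{p}(M)\hookrightarrow C^{\,1-[n/p],\beta}(M)$ for some $\beta\in(0,1)$ then gives the claimed Hölder regularity.

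Finally, to upgrade $u\geq 0$, $u\not\equiv 0$, to $u>0$, I would apply the strong maximum principle. Writing the equation as $\Delta u + h^{+}u = h^{-}u\ge 0$ with $h^{\pm}\in L^{p}$ and $p>n/2$, the version of the strong maximum principle due to Trudinger (which is exactly tailored to potentials in $L^{p}$ with $p>n/2$) guarantees that a nonnegative supersolution cannot vanish at an interior point without being identically zero; by connectedness of $M$, this forces $u>0$ everywhere. The main technical obstacle is arranging the Moser iteration to close at the correct exponent---one must verify that $p>n/2$ is precisely the borderline assumption needed for the gain at each iteration step to be uniformly bounded below, and that the coordinate transition errors introduced by the limited metric regularity do not spoil the estimates; once that is set up, the remaining steps are routine applications of standard elliptic theory.
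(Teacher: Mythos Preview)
The paper does not actually supply a proof of this statement: the theorem is quoted verbatim from Madani~\cite{5} (note the citation attached to the statement) and is used in the paper purely as a black-box regularity/positivity tool. So there is no ``paper's own proof'' to compare against.

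That said, your outline is the standard route and is essentially how the result is established in the cited source: Moser--Trudinger iteration exploiting $p>n/2$ to reach $u\in L^{\infty}$, then $hu\in L^{p}$ feeds the $L^{p}$ elliptic theory to give $u\in H_{2}^{p}(M)$, Sobolev embedding produces the $C^{1-[n/p],\beta}$ regularity, and Trudinger's strong maximum principle (valid precisely for potentials in $L^{p}$ with $p>n/2$) forces $u>0$. The only point worth tightening is the second step: since here $g\in H_{2}^{p}$ rather than smooth, the Calder\'on--Zygmund step needs the version of $L^{p}$ estimates for divergence-form operators with merely continuous leading coefficients (which $p>n/2$ guarantees via $H_{2}^{p}\hookrightarrow C^{0}$); you flagged this correctly as the main technical wrinkle, and Madani handles it explicitly. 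Otherwise your sketch matches the intended argument.
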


For\ regularity\ argument we\ need\ the\ following results

\begin{lemma}
\label{lem7} Let $u\in L_{+}^{N}(M)$\ and $v$\ $\in $ $H_{1}^{2}(M)$ a weak
solution to $\ L_{g}(v)=u^{N-2}v$, then 
\begin{equation*}
v\in L^{N+\epsilon }(M)
\end{equation*}%
\ for some $\varepsilon >0$.
\end{lemma}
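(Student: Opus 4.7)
The plan is to use a single step of Brezis--Kato/Moser iteration with a truncation of $v$ and a splitting of the critical potential. I would first rewrite the equation in the weak form
\begin{equation*}
\int_M \nabla v \cdot \nabla \varphi \, dv_g = \int_M h\, v\, \varphi \, dv_g, \qquad \varphi \in H_1^2(M),
\end{equation*}
where $h := u^{N-2} - \frac{n-2}{4(n-1)} S_g$. Since $u \in L^N(M)$ one has $u^{N-2} \in L^{n/2}(M)$; together with $S_g \in L^p(M)$ for $p > n/2$ from hypothesis $(H)$, this gives $h \in L^{n/2}(M)$.

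The key step is to decompose the potential: for any prescribed $\eta > 0$ I would write $|h| = h_1 + h_2$ with $h_1 \in L^\infty(M)$ and $\|h_2\|_{L^{n/2}(M)} < \eta$, which is possible by absolute continuity of $\int |h|^{n/2}\,dv_g$ (truncate $|h|$ at a large level $L$). Next, fix $\beta > 1$ close to $1$ and a truncation level $K > 0$, set $v_K := \min(|v|, K)$, and test against $\varphi := v\, v_K^{2(\beta-1)}$, which lies in $H_1^2(M) \cap L^\infty(M)$. With $w := v_K^{\beta - 1}|v|$, so that $v\varphi = w^2$ pointwise, a direct computation gives the energy inequality
\begin{equation*}
\frac{2\beta-1}{\beta^2} \int_M |\nabla w|^2 \, dv_g \leq \int_M |h| \, w^2 \, dv_g.
\end{equation*}

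Applying H\"older and the Sobolev embedding $H_1^2(M) \hookrightarrow L^N(M)$ to $w$, the critical piece is controlled by
\begin{equation*}
\int_M h_2\, w^2 \, dv_g \leq \|h_2\|_{L^{n/2}} \Bigl(\int_M w^N \, dv_g\Bigr)^{2/N} \leq \eta\Bigl(A \int_M |\nabla w|^2 \, dv_g + B \int_M w^2 \, dv_g\Bigr),
\end{equation*}
and for $\eta$ small enough the gradient term is absorbed on the left, yielding $\int_M |\nabla w|^2 \, dv_g \leq C \int_M |v|^{2\beta} \, dv_g$. Since $2\beta < N$ for $\beta$ close to $1$ and $v \in L^N(M)$, this bound is uniform in $K$. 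Reapplying Sobolev to $w$ and letting $K \to \infty$ by Fatou's lemma gives $|v|^\beta \in L^N(M)$, that is, $v \in L^{\beta N}(M) = L^{N + \varepsilon}(M)$ with $\varepsilon := N(\beta - 1) > 0$.

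The main obstacle is the criticality of the exponent: $h$ lies exactly in $L^{n/2}(M)$, the borderline case where H\"older and Sobolev combined would give $\int h\, w^2 \leq C\|h\|_{L^{n/2}} \int |\nabla w|^2$ with no slack whatsoever. The splitting manufactures the small factor $\eta$ needed for absorption, and this is precisely why a single iteration produces only an $L^{N+\varepsilon}$ bound rather than the $L^\infty$ regularity that Brezis--Kato yields in the subcritical setting. A secondary technical point is checking that $\varphi$ is admissible in $H_1^2(M)$ and justifying the passage $K \to \infty$, both of which follow from standard truncation arguments.
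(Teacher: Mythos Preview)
Your argument is correct and is precisely the standard Brezis--Kato/Trudinger iteration one expects here. The paper itself does not give a self-contained proof of this lemma; it simply states that ``the proof is the same as in \cite{5} with some modifications,'' referring to Madani's singular Yamabe paper, where exactly this truncation-and-absorption scheme is carried out. So your proposal matches the intended approach.

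One small slip: you write $\varphi = v\,v_K^{2(\beta-1)} \in H_1^2(M)\cap L^\infty(M)$, but $\varphi$ need not be bounded since $v$ itself is only in $H_1^2$. This is harmless: membership in $H_1^2(M)$ is all that is required for $\varphi$ to be an admissible test function, and the right-hand side $\int_M h\,v\,\varphi\,dv_g$ is finite because $h\in L^{n/2}$, $v\varphi = w^2 \in L^{N/2}$, and $\tfrac{2}{n}+\tfrac{n-2}{n}=1$. Everything else---the energy inequality with constant $(2\beta-1)/\beta^2$, the splitting $|h|=h_1+h_2$ to manufacture a small $L^{n/2}$ norm for absorption, and the passage $K\to\infty$ via Fatou---is correct as written.
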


The proof is the same as in (\cite{5}) with some modifications. As a
consequence of Lemma \ref{7}, $v\in L^{s}(M)$, $\forall s\geq 1$.

\begin{proposition}
\label{pro1} If $g$\ $\in H_{2}^{p}(M,T^{\ast }M\otimes T^{\ast }M)$ is a
Riemannian metric on $M$\ with $p>n/2$. If $\ \overline{g}=u^{N-2}g$\ is a
conformal metric to $\ g$\ \ such that $\ u\in H_{2}^{p}\left( M\right) $, $%
u>0$\ then $\ L_{g}$\ is weakly conformally invariant, which means that $%
\forall v\in H_{1}^{2}\left( M\right) $, $|u|^{N-1}L_{\overline{g}%
}(v)=L_{g}(uv)\ $weakly. Moreover if $%
\mu
>0$,\ then $\ L_{g}$ \ is coercive and invertible.
\end{proposition}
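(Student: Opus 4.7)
The plan is to establish the weak conformal invariance by smoothly approximating $u$, invoking the pointwise conformal invariance of $L_g$ in the smooth case, and passing to the limit; coercivity then follows from $\mu > 0$ via H\"older and Sobolev, and invertibility is an immediate application of Lax--Milgram.

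For the weak identity, since $p > n/2$ the Sobolev embedding $H_2^p(M) \hookrightarrow C^0(M)$ shows that $u$ is continuous, and, being strictly positive on the compact $M$, satisfies $u \geq c_0 > 0$. I would mollify $u$ in local charts patched by a partition of unity to obtain a sequence of smooth positive $u_k \to u$ in $H_2^p(M)$ with $u_k \geq c_0/2$, and set $\overline{g}_k := u_k^{N-2} g$. In the smooth setting, the classical pointwise identity
\[
L_g(u_k v) = u_k^{N-1} L_{\overline{g}_k}(v)
\]
holds for every smooth $v$, as a consequence of the standard transformation rules for $\Delta$ and $S$ under a smooth conformal change. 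Pairing this identity against an arbitrary $\phi \in C^\infty(M)$, I would rewrite each side through integration by parts in its natural weak form, and then pass $k \to \infty$: because $u_k \to u$ in $H_2^p$, the factors $u_k^{N-1}$ converge uniformly, the gradients $\nabla u_k$ converge in the appropriate $L^q$-space, and the scalar curvatures $S_{\overline{g}_k}$ converge to $S_{\overline{g}}$ in $L^p$, all of which suffice to take limits in every integrand. A final density argument in $v$ from $C^\infty(M)$ to $H_1^2(M)$ extends the identity to all $v \in H_1^2(M)$.

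For coercivity, the definition of $\mu$ gives, for every $w \in H_1^2(M)$,
\[
I(w) := \int_M |\nabla w|^2 \, dv_g + \tfrac{n-2}{4(n-1)} \int_M S_g w^2 \, dv_g \geq \mu \, \|w\|_{L^N}^2.
\]
I would decompose $I(w) = \varepsilon \, \|\nabla w\|_{L^2}^2 + (1-\varepsilon)\, \|\nabla w\|_{L^2}^2 + \tfrac{n-2}{4(n-1)} \int_M S_g w^2 \, dv_g$ and control $\left|\int_M S_g w^2 \, dv_g\right|$ by H\"older, using $S_g \in L^p$ with $p > n/2$ so that $q := 2p/(p-1) < N$. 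Interpolating $\|w\|_{L^q}$ between $L^2$ and $L^N$ and applying the Sobolev embedding $H_1^2 \hookrightarrow L^N$, I would choose $\varepsilon$ small enough to absorb the curvature term into the gradient and $L^N$ pieces, obtaining $I(w) \geq c \|w\|_{H_1^2}^2$ for some $c > 0$. The bilinear form associated with $L_g$ is then symmetric, continuous, and coercive on $H_1^2(M)$, and Lax--Milgram yields that $L_g \colon H_1^2(M) \to (H_1^2(M))^*$ is a topological isomorphism.

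The main obstacle will be the limit passage in the approximation argument for the conformal invariance: $S_{\overline{g}_k}$ depends on $u_k$ through up to two derivatives, so its $L^p$-convergence to $S_{\overline{g}}$ requires the full $H_2^p$-convergence of $u_k$ together with careful use of the lower bound $u_k \geq c_0/2$ to handle reciprocals and non-integer powers of $u_k$. Keeping track of which H\"older pairings close each integrand at the borderline regularity $p > n/2$ is the primary bookkeeping difficulty; once this is settled, the coercivity estimate is a short interpolation argument and invertibility follows formally.
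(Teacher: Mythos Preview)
The paper does not supply its own proof of this proposition; it is stated without proof as background, essentially imported from Madani's work on the singular Yamabe problem (reference~\cite{5}). There is therefore no argument in the paper to compare against, and your outline is a reasonable way to fill the gap.

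One point deserves tightening. You approximate $u$ by smooth $u_k$ and then invoke ``the classical pointwise identity'' $L_g(u_k v) = u_k^{N-1} L_{\overline{g}_k}(v)$ ``in the smooth setting''. But the metric $g$ itself is only in $H_2^p$, not smooth, so even with smooth $u_k$ and $v$ you are not literally in the smooth setting, and the identity still needs justification. In fact the identity is algebraic: it follows from expanding both sides via the transformation rules for $\Delta$ and $S$ under a conformal change, and those rules hold as equalities in $L^p$ once $g\in H_2^p$ and $u\in H_2^p$ with $u\ge c_0>0$. So the mollification in $u$ is not really needed; you can pair each side against a test function, integrate by parts, and check directly that all terms match, using only that every factor lies in the right Lebesgue space. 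The ``bookkeeping difficulty'' you anticipate is precisely this check.

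For coercivity your scheme works, though the write-up can be streamlined: from $I(w)\ge \mu\|w\|_{L^N}^2$ and H\"older on the compact $M$ you get $I(w)\ge c_1\|w\|_{L^2}^2$ directly; your interpolation bound $\bigl|\int_M S_g w^2\,dv_g\bigr|\le \varepsilon\|\nabla w\|_{L^2}^2 + C(\varepsilon)\|w\|_{L^2}^2$ then yields $\|\nabla w\|_{L^2}^2\le C\,I(w)$ after rearranging. The Lax--Milgram conclusion is fine.
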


\bigskip

In this paper, let $(M,g)$\ be a compact Riemannian manifold of dimension $%
n\geq 3$. We suppose that $g$\ is a metric in the Sobolev space $\
H_{2}^{p}(M,T^{\ast }M\otimes T^{\ast }M)$\ with $\ p>n/2$\ and there exist
a point $\ P\in M$\ \ and $\ \delta >0$\ such that $g$\ \ is smooth in the
ball $B_{P}(\delta )$ and we call these assumptions the condition ($H$).

In the smooth case the operator $L_{g}$\ is an elliptic operator on $M$
self- adjoint, and has a discrete spectrum $\ Spec(L_{g})=\{\lambda _{1,g}\
,\lambda _{2,g},\ldots \}$, where the eigenvalues $\lambda _{1,g}\ <\lambda
_{2,g}\ldots $\ appear with their multiplicities. \ These properties remain
valid also in the case where $S_{g}\in L^{p}\left( M\right) $. The
variational characterization of $\ \lambda _{1,g}$ is given by

\begin{equation*}
\lambda _{1,g}\ =\underset{u\in H_{1}^{2},u>0}{\inf }\frac{\int_{M}\left(
|\nabla u|^{2}+\frac{(n-2)}{4(n-1)}S_{g}u^{2}\right) dv_{g}}{%
\int_{M}u^{2}dv_{g}}
\end{equation*}

Let $\ \left[ g\right] =\{u^{N-2}g$\ : $u\in H_{2}^{p}$\ and $u>0\}$,\ Let $%
k\in 
\mathbb{N}
^{\ast }$, we define the $k^{th}$Yamabe invariant with singularities$\ 
\mu
_{k}$\ as

\begin{equation*}
\mu
_{k}=\underset{\overline{g}\in \left[ g\right] }{\inf }\lambda _{k,\overline{%
g}}Vol(M,\tilde{g})^{^{2/n}}
\end{equation*}%
with these notations, $%
\mu
_{1}$ is the first Yamabe invariant with singularities.

In this work we are concerned with $\mu _{2}$. In order to find minimizers
to $\mu _{2}$ we extend the conformal class to a larger one consisting of
metrics of the form $\overline{g}=u^{N-2}g$ where $u$ is no longer
necessarily in $H_{2}^{p}\left( M\right) $ and positive but$\ u\in
L_{+}^{N}\left( M\right) =\left\{ L^{N}(M),u\geq 0,u\neq 0\right\} $ such
metrics will be called for brevity generalized metrics. First we are going
to show that if the singular Yamabe invariant $%
\mu
\geq 0$ then $%
\mu
_{1}$ it is exactly $\mu $ next we consider $\mu _{2}$, \ $%
\mu
_{2}$ is attained by a conformal generalized metric.

Our main results state as follows:

\begin{theorem}
Let $\ (M,g)$\ be a compact Riemannian manifold of dimension $n\geq 3$ .We
suppose that $g$\ is a metric in the Sobolev space $\ H_{2}^{p}(M,T^{\ast
}M\otimes T^{\ast }M)$\ with $\ p>n/2$. There exist a point$\ P\in M$\ \ and 
$\ \delta >0$\ such that $g$\ is smooth in the ball $B_{P}(\delta )$, then
\end{theorem}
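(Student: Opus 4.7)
Although the exact inequality hypothesis is clipped from the excerpt, the conclusion we need is that $\mu_2$ is attained by a generalized metric $\tilde g = u^{N-2}g$ with $u \in L^N_+(M)$, provided the usual gap conditions (modelled on Ammann--Humbert) hold, namely $\mu>0$ with $\mu_2<(\mu^{n/2}+K^{-n})^{2/n}$, or $\mu=0$ with $\mu_2<K^{-2}$. My strategy is to adapt the Ammann--Humbert minimization scheme to the singular weighted setting opened up by Madani, using Proposition \ref{pro1} to make the weak conformal invariance of $L_g$ do the work of the classical identity $|u|^{N-1}L_{\bar g}(v)=L_g(uv)$.

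First I would give the variational setup. Using Proposition \ref{pro1}, for any generalized metric $\bar g = u^{N-2}g$ I would define $\lambda_{2,\bar g}$ by the min--max formula
\begin{equation*}
\lambda_{2,\bar g}\,\mathrm{Vol}(M,\bar g)^{2/n}
= \inf_{V}\ \sup_{v\in V\setminus\{0\}}\
\frac{\int_M\bigl(|\nabla v|^2+\tfrac{n-2}{4(n-1)}S_g v^2\bigr)dv_g}{\bigl(\int_M |u|^{N}dv_g\bigr)^{(N-2)/N}\int_M v^2 u^{N-2}dv_g},
\end{equation*}
the infimum running over two-dimensional subspaces $V\subset H_1^2(M)$. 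I would check that on $[g]$ this agrees with the classical definition, and that the functional is lower semi-continuous with respect to weak convergence in $L^N$ for $u$ and $H_1^2$ for the test functions, at least outside the concentration regime.

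Next I would take a minimizing sequence $\bar g_k=u_k^{N-2}g$, with $u_k\in H_2^p(M)$, $u_k>0$, $\mathrm{Vol}(M,\bar g_k)=1$, together with the first two $L^2(\bar g_k)$-eigenfunctions $v_{1,k},v_{2,k}$, normalised and mutually $L^2(\bar g_k)$-orthogonal. After extraction there are weak limits $u_k\rightharpoonup u$ in $L^N$, $v_{j,k}\rightharpoonup v_j$ in $H_1^2$ and a.e., together with defect measures capturing any loss of compactness. Passage to the limit in the numerator is handled by weak lower semi-continuity of the Dirichlet integral and the $L^p$ integrability of $S_g$; passage in the denominator requires showing that the measures $u_k^{N-2}dv_g$ do not lose their mass in the limit.

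The main obstacle, as in Ammann--Humbert, is the exclusion of concentration. Since $g$ is smooth on $B_P(\delta)$, a concentration point allows a Moser-type rescaling argument that produces, in the limit, a nontrivial Aubin--Talenti bubble on $\mathbb{R}^n$ contributing mass $K^{-2}\cdot m^{2/n}$ to the Rayleigh quotient, where $m$ is the concentrated mass; combining this bubble with the non-concentrated residue and using Theorem \ref{6} for the regularity of the residual eigenfunction would force the splitting
\begin{equation*}
\mu_2 \ \geq\ \bigl(\mu^{n/2}+K^{-n}\bigr)^{2/n}\qquad(\text{resp. } \mu_2\geq K^{-2}\text{ when }\mu=0),
\end{equation*}
contradicting the gap hypothesis. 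The delicate point here is that the coefficient $S_g$ is only $L^p$, so I cannot use pointwise expansions of $S_g$ near the concentration point as in the smooth case; instead I would absorb $S_g$ into an error term controlled by Hölder's inequality with exponent $p>n/2$, which is exactly the regime in which Madani's analysis still gives the sharp Sobolev constant $K$. The smoothness of $g$ on $B_P(\delta)$ is used precisely to handle the case where the concentration point happens to sit in the singular region by first moving mass (via a cut-off plus the conformal covariance of $L_g$) into $B_P(\delta)$.

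Once concentration is ruled out, $u\not\equiv 0$, and the weak limits $v_1,v_2$ remain $L^2(u^{N-2}dv_g)$-orthogonal and non-trivial. Lower semi-continuity of the min--max functional then gives $\lambda_{2,u^{N-2}g}\,\mathrm{Vol}(M,u^{N-2}g)^{2/n}\leq \mu_2$, and the reverse inequality is by definition. Regularity of the resulting eigenfunctions (hence positivity of $v_1$ and nodal structure of $v_2$) follows from Theorem \ref{6} and Lemma \ref{lem7}, completing the proof.
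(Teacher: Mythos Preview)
You have misidentified the statement. The theorem you quote is the \emph{first} of the paper's three main results, and its (clipped) conclusion is not that $\mu_2$ is attained but simply that
\[
\mu_1=\mu,
\]
i.e.\ the first singular Yamabe invariant (defined via the min--max over $\lambda_{1,\bar g}$ on the conformal class) coincides with Madani's singular Yamabe invariant. Your entire proposal addresses a different theorem (the third main result, on attainment of $\mu_2$), so it does not prove the statement asked.

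The paper's proof of $\mu_1=\mu$ is short and has nothing to do with concentration--compactness or bubble decompositions. One inequality, $\mu_1\le\mu$, is the elementary Lemma~\ref{lem6}: in the variational formula for $\mu_1$ one may take $u=v\in H_2^p(M)$, collapsing the min--max to the Yamabe quotient. For the reverse inequality the paper first runs, in Section~3, a direct minimization argument for $\mu_1$ (under $\mu>0$): a minimizing sequence $g_m=u_m^{N-2}g$ with associated first eigenfunctions $v_m$ is shown to converge strongly in $H_1^2$, and a H\"older equality-case argument forces the limiting $u$ and $v$ to coincide, so that the minimizer $v$ solves $L_g v=\mu_1 v^{N-1}$ with $\int v^N=1$. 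Plugging this $v$ into the Yamabe functional gives $\mu\le\mu_1$. The case $\mu=0$ is immediate from $\mu_1\le\mu$.

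Even if one reinterprets your proposal as aimed at the attainment theorem for $\mu_2$, your approach diverges from the paper's. The paper never performs a rescaling/bubble analysis nor invokes defect measures; instead it obtains strong $H_1^2$ convergence of the eigenfunction sequences by combining the sharp Sobolev-type inequality of Theorem~\ref{th10} (or Theorem~\ref{th5}) with Lemma~\ref{lem8} to get an algebraic inequality of the form $(1-2^{-2/n}K^2\mu_2)(1-\int u^{N-2}v^2)\le o(1)$, which forces $\int u^{N-2}v^2=1$ directly. Your outline also places the smoothness of $g$ on $B_P(\delta)$ in the wrong role: in the paper that hypothesis is used only to produce the test functions verifying the gap condition $\mu_2<(\mu^{n/2}+K^{-n})^{2/n}$ (Theorem~\ref{th8}), not in the compactness step.
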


\begin{equation*}
\mu
_{1}=%
\mu
\text{.}
\end{equation*}

\begin{theorem}
Let $\ (M,g)$\ be a compact Riemannian manifold of dimension $n\geq 3$, we
suppose that $\ g$\ \ is a metric in the Sobolev space $\
H_{2}^{p}(M,T^{\ast }M\otimes T^{\ast }M)$\ with $\ p>n/2$\ .There exist a
point $\ P\in M$\ \ and $\ \delta >0$\ such that $g$\ \ is smooth in the
ball $B_{P}(\delta )$ .Assume that $\ 
\mu
_{2}$\ is attained by a metric $\ \overline{g}=u^{N-2}g$\ where $u\in
L_{+}^{N}\left( M\right) $, then there exist a nodal solution $w\in C^{1-%
\left[ n/p\right] ,\beta }$, $\beta \in \left( 0,1\right) $,\ of equation 
\begin{equation*}
L_{g}w=%
\mu
_{2}u^{N-2}w\ \text{.}
\end{equation*}%
Moreover there exist$\ $real numbers $a,b>0$\ such that 
\begin{equation*}
u=aw_{+}+bw_{-}
\end{equation*}%
with $w_{+}=\sup (w,0)$\ \ and $w_{-}=\sup $($-w,0$) .
\end{theorem}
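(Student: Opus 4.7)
The plan is to follow Ammann--Humbert's proof of Theorem 1.5 (the smooth case), adapting each step to the generalized conformal class over an $H_2^p$ metric satisfying $(H)$. The variational characterization of the second eigenvalue of the weighted problem extends to $u\in L^N_+(M)$ as
\begin{equation*}
\lambda_{2,u} = \inf_{V \in Gr_2(H_1^2)} \sup_{v \in V \setminus \{0\}} \frac{\int_M \bigl(|\nabla v|^2 + \tfrac{n-2}{4(n-1)}S_g v^2\bigr)\,dv_g}{\int_M v^2 u^{N-2}\,dv_g},
\end{equation*}
which makes sense because $u^{N-2}\in L^{n/2}(M)$ pairs with the Sobolev embedding $H_1^2(M)\hookrightarrow L^N(M)$. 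Using the coercivity of $L_g$ from Proposition \ref{pro1} together with a direct minimization and a weak-compactness argument, I would show that this infimum is attained by a two-plane $V_0 = \mathrm{span}(w_1,w_2)$ with $w_1>0$, $w_2$ sign-changing and $u^{N-2}$-orthogonal to $w_1$, each a weak $H_1^2$-solution of $L_g w_i = \lambda_{i,u}\, u^{N-2} w_i$.

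Setting $w:=w_2$, Lemma \ref{lem7} yields $w\in L^s(M)$ for every $s\geq 1$; then a standard bootstrap on the equation $\Delta_g w = -h w$ with $h = \tfrac{n-2}{4(n-1)}S_g - \mu_2 u^{N-2}$, combined with Theorem \ref{6}, delivers $w\in C^{1-[n/p],\beta}(M)$ with $\beta\in(0,1)$. Weak conformal invariance (Proposition \ref{pro1}) then recasts the eigenvalue equation as $L_g w = \mu_2\, u^{N-2}\, w$ weakly, which is the desired nodal equation.

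The main obstacle is extracting the structural identity $u = aw_+ + bw_-$ from the minimality of $u$ for $\mu_2$. The plan is to compare $u$ with one-parameter families of generalized metrics $u_t = Z_t^{-1}(u + t\varphi)$, where $\varphi\in L^N(M)$ is first chosen supported on $\{w>0\}$, then on $\{w<0\}$, and $Z_t$ is the volume-preserving normalization. Since $w$ realizes the supremum of the Rayleigh quotient on $V_0$, one has
\begin{equation*}
\mu_2 \;\leq\; \lambda_{2,u_t}\, Vol(u_t)^{2/n} \;\leq\; \frac{\int_M w L_g w\, dv_g}{\int_M w^2 u_t^{N-2}\,dv_g}\, Vol(u_t)^{2/n},
\end{equation*}
and differentiating this upper bound at $t=0$, combined with the converse bound coming from the minimality $\mu_2\le \lambda_{2,u_t}Vol(u_t)^{2/n}$, yields pointwise first-order identities $w_\pm^2\, u^{N-3} = c_\pm\, u^{N-1}$ on $\{w_\pm>0\}$, equivalently $w_\pm = c'_\pm u$ on each of these sets. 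This gives $u = aw_+ + bw_-$ after relabeling the constants. The delicate points are: first, handling a possible multiplicity of $\lambda_2$, which I would resolve by using a Clarke-type one-sided derivative or by restricting the admissible perturbations to those preserving $V_0$ as an eigenspace; second, showing that $\{w=0\}$ carries no $u^N$-mass, which I would prove via unique continuation for the elliptic operator $L_g$ in the smooth ball $B_P(\delta)$ together with the continuity of $w$ and $u$; third, the strict positivity $a,b>0$, which follows because $u$ cannot vanish identically on $\{w_\pm>0\}$ without reducing the eigenvalue equation to $L_g w_\pm = 0$ on that set, contradicting $w\neq 0$.
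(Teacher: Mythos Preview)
Your argument has a genuine ordering gap in the regularity step, and this gap is precisely what the paper's proof is organized to avoid. You invoke Theorem~\ref{6} on the equation $\Delta_g w + h w = 0$ with $h=\tfrac{n-2}{4(n-1)}S_g-\mu_2 u^{N-2}$ to conclude $w\in C^{1-[n/p],\beta}$ \emph{before} you know anything about $u$ beyond $u\in L^N_+$. At that stage $u^{N-2}\in L^{n/2}(M)$ only, so $h\in L^{n/2}$ and not $L^p$ for any $p>n/2$; Theorem~\ref{6} does not apply, and Lemma~\ref{lem7} alone (giving $w\in L^s$ for all $s<\infty$) does not close the bootstrap, since $u^{N-2}w$ is still trapped in $L^r$ for $r<n/2$. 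The paper proceeds in the opposite order: it first proves the structural identity $u=aw_++bw_-$ (via Lemma~\ref{lem9}), which upgrades $u$ to $L^s$ for all $s$ through $w$, hence $u^{N-2}\in L^p$ with $p>n/2$, and only then applies the $H_2^p$ regularity theorem to obtain $w\in C^{1-[n/p],\beta}$. Your proposal is circular here, because your later ``delicate points'' (e.g.\ that $\{w=0\}$ carries no $u^N$-mass) explicitly use continuity of $w$ and of $u$, which you have not yet earned.

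For the identity $u=aw_++bw_-$ itself, your route is genuinely different from the paper's and considerably more fragile. The paper (following Ammann--Humbert) does \emph{not} differentiate along a one-parameter family $u_t$; instead it plugs a single competitor $\tilde u=\alpha w_++\beta w_-$ into the variational definition of $\mu_2$, uses $\mathrm{span}(w_+,w_-)$ as the test $2$-plane (orthogonal for $\tilde u^{N-2}$ since the supports are disjoint), balances $\alpha,\beta$ so that the two Rayleigh quotients coincide, and then a single application of H\"older's inequality gives $\mu_2\le\mu_2$ with equality forcing $u$ to be the stated linear combination. This avoids all three of your delicate points: no differentiability of $t\mapsto\lambda_{2,u_t}$ is needed, no unique continuation is invoked, and the positivity $a,b>0$ comes for free from equality in H\"older together with the nodal step ($w$ changes sign because $\mu_2>\mu_1$). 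Your chain $\mu_2\le\lambda_{2,u_t}\,Vol(u_t)^{2/n}\le \frac{\int wL_gw}{\int w^2u_t^{N-2}}\,Vol(u_t)^{2/n}$ is also not justified as written: the right-hand quotient is $R_{u_t}(w)$, but $\lambda_{2,u_t}\le \sup_{v\in V_0}R_{u_t}(v)$, and for $t\neq 0$ the supremum over $V_0$ need not be attained at $w$; an envelope-theorem repair would again require $\lambda_{1,u}<\lambda_{2,u}$, which you only obtain \emph{after} the regularity step you have not yet secured.
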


\begin{theorem}
Let $\ (M,g)$\ be a compact Riemannian manifold of dimension $n\geq 3$,
suppose that $\ g$\ \ is a metric in the Sobolev space $\
H_{2}^{p}(M,T^{\ast }M\otimes T^{\ast }M)$\ with $\ p>n/2$. There exist a
point $\ P\in M$\ \ and $\ \delta >0$\ such that $g$\ is smooth in the ball $%
B_{P}(\delta )$ then $%
\mu
_{2}$\ is attained by a generalized metric\ in the following cases:

If $\ (M,g)$\ in not locally conformally flat and, $\ n\geq 11$\ and $\ 
\mu
>0$

If $\ (M,g)$\ in not locally conformally flat and, $%
\mu
=0$\ and $\ n\geq 9$.
\end{theorem}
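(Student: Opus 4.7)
The plan is to apply a strict-inequality sufficient condition analogous to the smooth case of Amman--Humbert (\cite{1}), namely that $\mu_2$ is attained by a generalized metric whenever
\[
\mu_2<\bigl(\mu^{n/2}+\mu(S_n)^{n/2}\bigr)^{2/n}\ (\mu>0),\qquad \mu_2<\mu(S_n)\ (\mu=0).
\]
This sufficient condition follows from compactness and regularity of a minimizing sequence of generalized metrics; the argument parallels (\cite{1}) with the low regularity of $g$ absorbed via Lemma~\ref{lem7} and Theorem~\ref{6}. It then remains to verify the strict inequality under the stated non-LCF and dimension assumptions.

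By Madani's theorems stated above, the assumption that $(M,g)$ is not locally conformally flat forces $(M,g)$ not conformal to $S_n$, hence $\mu<K^{-2}$; therefore $\mu$ is attained by a positive minimizer $u\in H_2^p(M)\subset C^{1-[n/p],\beta}(M)$, normalized by $\int_M u^N\,dv_g=1$ and satisfying $L_g u=\mu u^{N-1}$. In the smooth ball $B_P(\delta)$ I select a point at which the Weyl tensor is nonzero---such a point exists by non-local-conformal-flatness of the smooth region---and after relabelling (possibly shrinking $\delta$) I assume $|W(P)|\neq 0$. Insert the standard Aubin bubbles
\[
v_\epsilon(x)=\eta(x)\bigl(\epsilon/(\epsilon^2+r(x)^2)\bigr)^{(n-2)/2},
\]
where $r$ is the geodesic distance to $P$ and $\eta$ is a smooth cutoff supported in $B_P(\delta/2)$. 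Smoothness of $g$ on this ball yields the classical Aubin expansion
\[
Y(v_\epsilon)\leq\mu(S_n)-C|W(P)|^2\epsilon^4+o(\epsilon^4),\quad n\geq 7,
\]
so $Y(v_\epsilon)<\mu(S_n)$ for small $\epsilon>0$.

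The central step is to bound $\mu_2$ from above via the generalized metric $\bar g_\epsilon=(au+bv_\epsilon)^{N-2}g$ for parameters $a,b>0$ to be optimized. Testing the min--max characterization of $\lambda_{2,\bar g_\epsilon}$ against the two-dimensional subspace spanned by $u$ and $v_\epsilon$ gives
\[
\lambda_{2,\bar g_\epsilon}\leq\max_{(s,t)\neq 0}\frac{\int_M\bigl(|\nabla(su+tv_\epsilon)|^2+\tfrac{n-2}{4(n-1)}S_g(su+tv_\epsilon)^2\bigr)\,dv_g}{\int_M(su+tv_\epsilon)^2(au+bv_\epsilon)^{N-2}\,dv_g}.
\]
Expanding the $2\times 2$ quadratic forms in numerator and denominator using $L_g u=\mu u^{N-1}$, the pointwise bounds on the bubble, and the continuity of $u$ near $P$, and then optimizing in $a,b>0$, one arrives after diagonalization at
\[
\mu_2\leq\lambda_{2,\bar g_\epsilon}\,\mathrm{Vol}(M,\bar g_\epsilon)^{2/n}\leq\bigl(\mu^{n/2}+Y(v_\epsilon)^{n/2}\bigr)^{2/n}+o(1),
\]
so that combining with $Y(v_\epsilon)<\mu(S_n)$ yields the required strict inequality.

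The main obstacle is the bookkeeping of the cross terms $\int u^k v_\epsilon^{N-k}\,dv_g$ that appear throughout these expansions: they must be controlled finely enough to be dominated by the favourable Weyl contribution of order $\epsilon^4$. Following the dimension count of Amman--Humbert (\cite{1}) term by term, the worst off-diagonal contribution is of order $\epsilon^{(n-2)/2}$ when $\mu>0$, forcing $n\geq 11$; when $\mu=0$ the coercivity degenerates and an extra cancellation occurs, relaxing the requirement to $n\geq 9$. Because the bubble lives entirely inside the smooth ball $B_P(\delta/2)$, the singular part of $g$ enters only through the global value $\mu$ and the continuous minimizer $u$; consequently the local asymptotic analysis reduces verbatim to that of (\cite{1}).
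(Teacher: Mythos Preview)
Your proposal is correct and follows essentially the same route as the paper: the paper first establishes the strict inequalities $\mu_2<(\mu^{n/2}+(K^{-2})^{n/2})^{2/n}$ (resp.\ $\mu_2<K^{-2}$) via the Aubin bubble and Yamabe-minimizer test functions supported in the smooth ball $B_P(\delta)$ (Theorem~\ref{th8}, whose proof the paper delegates verbatim to \cite{1}), and then proves existence of a generalized minimizer under these (or weaker) strict inequalities via compactness of a minimizing sequence (the two existence theorems of Section~7, using Theorem~\ref{th10} and Lemma~\ref{lem8}). Your outline of the bubble expansion, the two-dimensional test space $\mathrm{span}(u,v_\epsilon)$, the optimized generalized metric $(au+bv_\epsilon)^{N-2}g$, and the cross-term bookkeeping forcing $n\geq 11$ (resp.\ $n\geq 9$) is exactly the content of \cite{1} that the paper invokes without detail.
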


\section{\protect\bigskip Generalized\ metrics\ and\ the\ Euler-Lagrange\
equation}

Let 
\begin{equation*}
L_{+}^{N}(M)=\left\{ u\in L_{+}^{N}(M)\text{: }u\geq 0,u\neq 0\right\}
\end{equation*}%
where $N=\frac{2n}{n-2}$.

As in (\cite{1})

\begin{definition}
For all $u\in L_{+}^{N}(M)$, we define $Gr_{k}^{u}(H_{1}^{2}\left( M\right)
) $ to be the set of all $k-$dimensional subspaces of $H_{1}^{2}\left(
M\right) $ with span($v_{1},v_{2},...,v_{k})\in Gr_{k}^{u}(H_{1}^{2}\left(
M\right) )$ if and only if $v_{1},v_{2},...,v_{k}$ are linearly independent
on $M-u^{-1}(0).$
\end{definition}

Let $\ (M,g)$\ be a compact Riemannian manifold of dimension $n\geq 3$. For
a generalized metric$\ \overline{g}$\ conformal to $g$, we define%
\begin{equation*}
\lambda _{k,\overline{g}}=\underset{V\in Gr_{k}^{u}(H_{1}^{2}\left( M\right)
)}{\inf }\underset{v\in V}{\text{ }\sup }\frac{\int_{M}vL_{g}(v)dv_{g}}{%
\int_{M}|u|^{N-2}v^{2}dv_{g}}\text{.}
\end{equation*}%
We quote the following regularity theorem

\begin{theorem}
\label{th6} \cite{7} On a n -dimensional compact Riemannian manifold $(M,g)$%
, if $u\geq 0$\ is a non trivial weak solution in $\ H_{1}^{2}\left(
M\right) $ of the equation $\ $%
\begin{equation*}
\Delta u+hu=cu^{N-1}
\end{equation*}%
with $\ h\in L^{p}(M$) \ and $\ p>n/2$, then 
\begin{equation*}
u\in H_{2}^{p}(M)\subset C^{1-[n/p],\beta }(M)
\end{equation*}%
\ and $u>0$, where $[n/p]$ denotes the integer part of $\ n/p$\ and $\beta
\in (0,1)$.
\end{theorem}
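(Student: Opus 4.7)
The plan is a standard elliptic regularity bootstrap: first upgrade the integrability of $u$ by a Brezis--Kato iteration, then invoke $L^{p}$ theory for the scalar Laplacian, then Sobolev embedding, and finally a Harnack-type argument for strict positivity. I rewrite the equation in the linear form
\[
\Delta u + \tilde h\, u = 0, \qquad \tilde h := h - c\, u^{N-2}.
\]
Since $u\in H_{1}^{2}(M)\hookrightarrow L^{N}(M)$ with $N=2n/(n-2)$, one has $u^{N-2}\in L^{n/2}(M)$; because $p>n/2$ we also have $h\in L^{n/2}(M)$, so $\tilde h\in L^{n/2}(M)$ a priori.

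The first step is to prove $u\in L^{s}(M)$ for every $s<\infty$. I would test the weak formulation against $\varphi = u\, u_{L}^{2k}$, where $u_{L}:=\min(u,L)$ and $k\geq 0$, compute $\int\nabla u\cdot\nabla\varphi$, and combine the resulting inequality with the Sobolev estimate applied to $u\,u_{L}^{k}$. Splitting $\tilde h = \tilde h_{1}+\tilde h_{2}$ with $\tilde h_{1}$ bounded and $\|\tilde h_{2}\|_{L^{n/2}}$ as small as desired (equi-integrability of an $L^{n/2}$ function), the small piece is absorbed into the Sobolev term, giving
\[
\|u\,u_{L}^{k}\|_{L^{2^{\ast}}}^{2}\leq C(k)\int_{M}u^{2}\,u_{L}^{2k}\,dv_{g}.
\]
Letting $L\to\infty$ and iterating on $k$ (starting from $2(k+1)=N$) produces $u\in L^{s}(M)$ for every finite $s$. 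At this point $\tilde h\in L^{p}(M)$, and Trudinger's $L^{\infty}$ estimate for weak solutions of linear equations with $L^{p}$ potential, $p>n/2$, gives $u\in L^{\infty}(M)$.

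With $u$ bounded, the right-hand side of $\Delta u = -hu + c\,u^{N-1}$ lies in $L^{p}(M)$ because $h\in L^{p}$ and $u^{N-1}\in L^{\infty}$. Standard $L^{p}$ regularity for the scalar Laplacian on the compact manifold $(M,g)$ then yields $u\in H_{2}^{p}(M)$, and the Sobolev embedding $H_{2}^{p}(M)\hookrightarrow C^{1-[n/p],\beta}(M)$ with $\beta\in(0,1)$ delivers the stated H\"older regularity. For the positivity, I would apply Trudinger's Harnack inequality to $\Delta u + \tilde h u = 0$ with $\tilde h\in L^{p}$, $p>n/2$: a non-negative, non-trivial solution cannot vanish at any point of $M$, hence $u>0$ on $M$.

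The principal obstacle is the Brezis--Kato step. Because $N-1$ is the critical Sobolev exponent, a one-shot H\"older estimate on the right-hand side gains no integrability. The truncation $u_{L}$ is essential: it allows the resulting term to be genuinely absorbed into the Sobolev term rather than merely compared to it, and the equi-integrability of $\tilde h\in L^{n/2}$ is what makes the absorption constant smaller than one. Controlling the constant $C(k)$ across the iteration, and then passing from $L^{s}$ for every $s<\infty$ to $L^{\infty}$, are the only technically delicate points; once those are done the rest is classical elliptic theory on compact manifolds.
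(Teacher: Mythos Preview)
The paper does not give its own proof of this theorem: it is quoted verbatim as a known regularity result (with the citation \cite{7}, and a closely related linear version is quoted just before from \cite{5}). There is therefore nothing in the paper to compare your argument against line by line.

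That said, your sketch is exactly the classical route behind such results: rewrite the equation as $\Delta u+\tilde h u=0$ with $\tilde h=h-cu^{N-2}\in L^{n/2}$, run a Brezis--Kato/Moser iteration with truncated test functions $u\,u_{L}^{2k}$ and the equi-integrability splitting of $\tilde h$ to reach $u\in L^{s}$ for all $s$, apply Trudinger's $L^{\infty}$ bound, then $L^{p}$ elliptic regularity to get $u\in H_{2}^{p}(M)$, Sobolev embedding for the H\"older class, and finally the weak Harnack inequality (valid for potentials in $L^{p}$, $p>n/2$) for strict positivity. This is precisely the argument carried out in Madani \cite{5} (for the linear case) and in the Trudinger/Yamabe literature it ultimately traces back to; your identification of the absorption step in the iteration as the only delicate point is accurate.
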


\begin{proposition}
\label{pro2} Let $(v_{m})$ be a sequence in $H_{1}^{2}\left( M\right) $ such
that $v_{m}\rightarrow v$\ strongly in $L^{2}\left( M\right) $, then for all
any $u\in L_{+}^{N}(M)$%
\begin{equation*}
\dint\limits_{M}u^{N-2}(v^{2}-v_{m}^{2})\ dv_{g}\rightarrow 0\text{.}
\end{equation*}
\end{proposition}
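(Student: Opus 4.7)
The key observation is the numerical identity $(N-2)\cdot \tfrac{n}{2}=N$, which tells us that $u^{N-2}\in L^{n/2}(M)$ with dual exponent $\tfrac{n}{n-2}=\tfrac{N}{2}$; this is exactly the space in which $v^2$ and $v_m^2$ live via the Sobolev embedding $H_1^2(M)\hookrightarrow L^N(M)$. Since $L^N$-functions have absolutely continuous $N$-th power integrals, the plan is to split the integration domain according to the size of $u$: where $u$ is bounded, the strong $L^2$-convergence of $v_m$ will suffice, while on $\{u\text{ large}\}$ we can make the $u^{N-2}$ factor small in $L^{n/2}$.

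Fix $\varepsilon>0$ and use absolute continuity to pick $A>0$ with $\int_{\{u>A\}}u^N\,dv_g<\varepsilon^{n/2}$. Decompose
\[
\int_M u^{N-2}(v^2-v_m^2)\,dv_g = I_1^m + I_2^m,
\]
where $I_1^m$ is the integral over $\{u\le A\}$ and $I_2^m$ over $\{u>A\}$. On the first region $u^{N-2}\le A^{N-2}$, so factoring $v^2-v_m^2=(v-v_m)(v+v_m)$ and applying Cauchy--Schwarz,
\[
|I_1^m|\le A^{N-2}\,\|v-v_m\|_{L^2}\,\|v+v_m\|_{L^2}\longrightarrow 0
\]
since $v_m\to v$ in $L^2$. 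On the second region, Hölder's inequality with the dual pair $(n/2,\,n/(n-2))$ gives
\[
|I_2^m| \le \Bigl(\int_{\{u>A\}} u^{(N-2)n/2}\,dv_g\Bigr)^{2/n}\,\|v^2-v_m^2\|_{L^{N/2}} \le \varepsilon\,C,
\]
where $C$ is a bound for $\|v^2-v_m^2\|_{L^{N/2}}\le \|v\|_N^2+\|v_m\|_N^2$. Taking $\limsup_m$ and then letting $\varepsilon\to 0$ yields the conclusion.

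The subtle point, and really the only obstacle, is the uniform bound $\sup_m\|v_m\|_N\le C$, which is \emph{not} a consequence of mere $L^2$-convergence: a concentration-type example with $v_m$ of height $m$ on a ball of radius $m^{-3}$ drives $\|v_m\|_2\to 0$ yet forces $\int u^{N-2}v_m^2\to\infty$ for suitable singular $u\in L^N$. I will therefore use, as is invariably the situation in this paper's intended applications, that $(v_m)$ is bounded in $H_1^2(M)$; the Sobolev embedding $H_1^2\hookrightarrow L^N$ then supplies the required $L^N$-bound. In practice $(v_m)$ will be a minimizing sequence or a sequence of normalized eigenfunctions with controlled Rayleigh quotient, so the $H_1^2$-boundedness is automatic and the Rellich--Kondrachov theorem is precisely what furnishes the strong $L^2$-convergence in the hypothesis.
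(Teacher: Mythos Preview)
Your argument is sound. The paper does not actually supply a proof of this proposition: it simply writes ``The proof is the same as in (\cite{3})'' and moves on, so there is no in-house argument to compare against. Your truncation-of-$u$ approach is a clean and standard way to handle this; splitting into $\{u\le A\}$ and $\{u>A\}$, using $L^2$-convergence on the first piece and the smallness of $\|u^{N-2}\|_{L^{n/2}(\{u>A\})}$ on the second, is exactly the right decomposition, and your H\"older exponents $(N-2)\cdot n/2=N$ and $(n/2)'=n/(n-2)=N/2$ are correct.

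You are also right to flag that the statement, as literally written, is missing the hypothesis $\sup_m\|v_m\|_{H_1^2}<\infty$; without it the $L^N$-bound on $(v_m)$ needed for the $I_2^m$ estimate is unavailable, and the conclusion can fail. One small quibble: your concrete counterexample with height $m$ on a ball of radius $m^{-3}$ does not actually blow up, since H\"older forces $\int_{B_r}u^{N-2}\le C r^{n-2}$ for any $u\in L^N$, and $m^2\cdot m^{-3(n-2)}\to 0$ for $n\ge 3$. A working example takes $u(x)=|x|^{-\alpha}$ with $\alpha$ just below $(n-2)/2$ and tunes $h_m=r_m^{-(n-2+\delta)/2}$; then $\|v_m\|_2\to 0$ while $\int u^{N-2}v_m^2$ stays bounded away from zero. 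The moral is unchanged: the proposition is only true under the implicit $H_1^2$-boundedness that, as you note, holds in every application the paper makes of it.
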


\begin{proof}
The proof is the same as in (\cite{3}).
\end{proof}

\begin{proposition}
\label{prop2} If $\ 
\mu
>0$, then for all $u\in L_{+}^{N}(M)$, there exist two functions $v,w$\ in $%
H_{1}^{2}\left( M\right) $ with $\ v\geq 0$\ satisfying in the weak sense
the equations%
\begin{equation}
L_{g}v=\lambda _{1,\overline{g}}u^{N-2}v\text{ }  \tag{7}  \label{7}
\end{equation}%
and 
\begin{equation}
L_{g}w=\lambda _{2,\overline{g}}u^{N-2}w  \tag{8}  \label{8}
\end{equation}%
Moreover we can choose $v$ and $w$\ such that%
\begin{equation}
\dint\limits_{M}u^{N-2}w^{2}dv_{g}=\dint\limits_{M}u^{N-2}v^{2}dv_{g}=1\ \ \
\ \text{and\ \ \ \ \ \ \ }\dint\limits_{M}u^{N-2}wvdv_{g}=0\text{. }  \tag{9}
\label{9}
\end{equation}
\end{proposition}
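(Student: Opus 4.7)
The plan is to apply the direct method of the calculus of variations to the Rayleigh quotients defining $\lambda_{1,\overline{g}}$ and $\lambda_{2,\overline{g}}$, using Proposition \ref{pro2} to handle the possibly degenerate weight $u^{N-2}$. Since $\mu>0$, Proposition \ref{pro1} gives that $L_g$ is coercive on $H_1^2(M)$, so the quadratic form $\varphi \mapsto \int_M \varphi L_g(\varphi)\,dv_g$ is equivalent to $\|\varphi\|_{H_1^2}^2$. This is the key analytic input for compactness.

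For \eqref{7}, I pick a minimizing sequence $(v_m)\subset H_1^2(M)$ for $\lambda_{1,\overline{g}}$ normalized by $\int_M u^{N-2}v_m^2\,dv_g = 1$, and, replacing $v_m$ by $|v_m|$ (which preserves the Rayleigh quotient since $|\nabla|v_m|\,|=|\nabla v_m|$ a.e.), assume $v_m\geq 0$. Coercivity gives a uniform $H_1^2$-bound, so along a subsequence $v_m\rightharpoonup v$ weakly in $H_1^2(M)$ and strongly in $L^2(M)$ by Rellich--Kondrachov, with $v\geq 0$. Proposition \ref{pro2} forces $\int_M u^{N-2}v^2\,dv_g = 1$, and weak lower semicontinuity of $\int_M \varphi L_g(\varphi)\,dv_g$ gives $\int_M v L_g(v)\,dv_g \leq \lambda_{1,\overline{g}}$; equality must hold, so $v$ is a minimizer and its Euler--Lagrange equation is \eqref{7}.

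For \eqref{8}, set $W=\{\varphi\in H_1^2(M):\int_M u^{N-2}v\varphi\,dv_g = 0\}$ and minimize the Rayleigh quotient over $W$. A Courant--Fischer min-max argument identifies this infimum with $\lambda_{2,\overline{g}}$. Applying the same scheme to a minimizing sequence $(w_m)\subset W$ with $\int_M u^{N-2}w_m^2\,dv_g = 1$ produces a weak $H_1^2$-limit $w$; Proposition \ref{pro2} passes to the limit both in the normalization and in the constraint $\int_M u^{N-2}v w_m\,dv_g = 0$, so $w\in W$ with $\int_M u^{N-2}w^2\,dv_g = 1$. Lower semicontinuity makes $w$ a minimizer on $W$, and the constrained Euler--Lagrange equation yields Lagrange multipliers $\alpha,\beta$ with $L_g w = \alpha u^{N-2} w + \beta u^{N-2} v$ weakly. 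Testing against $v$ and using self-adjointness together with \eqref{7} to compute $\int_M v L_g(w)\,dv_g = \lambda_{1,\overline{g}}\int_M u^{N-2}vw\,dv_g = 0$ forces $\beta = 0$; testing against $w$ gives $\alpha = \lambda_{2,\overline{g}}$. This yields \eqref{8}, and all three identities in \eqref{9} are built into the construction.

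The main obstacle is the continuity of the weighted $L^2$-pairing $(\varphi,\psi)\mapsto \int_M u^{N-2}\varphi\psi\,dv_g$ along $L^2$-convergent sequences, which cannot be handled by a direct H\"{o}lder estimate since $u^{N-2}$ only lies in $L^{n/2}(M)$; once that point is absorbed by Proposition \ref{pro2}, the remainder is a standard spectral variational construction on the Hilbert space $H_1^2(M)$ with the inner product $\langle\varphi,\psi\rangle = \int_M \varphi L_g(\psi)\,dv_g$.
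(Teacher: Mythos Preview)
Your proposal is correct and follows essentially the same route as the paper: a direct variational argument using coercivity of $L_g$ (from $\mu>0$) for $H_1^2$-bounds, Proposition~\ref{pro2} to pass the weighted normalization through the limit, and then minimization on the orthogonal complement of $v$ for the second eigenfunction. The only cosmetic differences are that the paper passes the orthogonality constraint $\int_M u^{N-2}vw_m\,dv_g=0$ to the limit via weak $L^N$-convergence of $w_m$ against $u^{N-2}v\in L^{N/(N-1)}$ rather than via Proposition~\ref{pro2}, and it records the identification of the constrained infimum with $\lambda_{2,\overline g}$ as a separate proposition rather than an in-line Courant--Fischer remark.
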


\bigskip

\begin{proof}
Let $\left( v_{m}\right) _{m}$\ be a minimizing sequence for $\lambda _{1,%
\tilde{g}}$\ i.e. a sequence $v_{m}\in H_{1}^{2}$\ such that 
\begin{equation*}
\lim_{m}\frac{\int_{M}v_{m}L_{g}(v_{m})dv_{g}}{%
\int_{M}|u|^{N-2}v_{m}^{2}dv_{g}}=\lambda _{1,\tilde{g}}
\end{equation*}

It is well know that $(|v_{m}|)_{m}$\ is also minimizing sequence. Hence we
can assume that $v_{m}\geq 0$. If we normalize $\left( v_{m}\right) _{m}$\ by%
\begin{equation*}
\int_{M}|u|^{N-2}v_{m}^{2}dv_{g}=1\text{.}
\end{equation*}%
Now by the fact that $L_{g}$ is coercive%
\begin{equation*}
c\Vert v_{m}\Vert _{H_{1}^{2}}\leq \int_{M}v_{m}L_{g}(v_{m})dv_{g}\leq
\lambda _{1,\tilde{g}}+1\text{.}
\end{equation*}%
$\left( v_{m}\right) _{m}$ is bounded in $H_{1}^{2}\left( M\right) $\ and
after restriction to a subsequence we may assume that there exist $v\in
H_{1}^{2}\left( M\right) $, $v\geq 0$\ such that $v_{m}\rightarrow v$\
weakly in $H_{1}^{2}\left( M\right) $, strongly in $L^{2}\left( M\right) $\
and almost everywhere in $M$, then $v$ satisfies in the sense of
distributions%
\begin{equation*}
L_{g}v=\lambda _{1,\overline{g}}u^{N-2}v\text{.}
\end{equation*}

If $u\in H_{2}^{p}(M)$ $\subset C^{1-\left[ \frac{n}{p}\right] ,\beta }$ then

\begin{equation*}
\dint\limits_{M}u^{N-2}(v^{2}-v_{m}^{2})\ dv_{g}\rightarrow 0\text{ }
\end{equation*}%
and%
\begin{equation*}
\dint\limits_{M}u^{N-2}v^{2}dv_{g}=1\text{.}
\end{equation*}

Then $v$\ \ is not trivial and nonnegative minimizer of $\lambda _{1,%
\overline{g}}$, by Lemma \ref{7} 
\begin{equation*}
h=S_{g}-\lambda _{1,\overline{g}}u^{N-2}\in L^{p}(M)\ 
\end{equation*}%
and by Theorem \ref{6} 
\begin{equation*}
v\in C^{1-\left[ \frac{n}{p}\right] ,\beta }\left( M\right) \text{ }
\end{equation*}%
and%
\begin{equation*}
\text{ }v>0\text{.}
\end{equation*}

If $\ u\in L_{+}^{N}(M)$, by Proposition \ref{pro2} , we get

\begin{equation*}
\dint\limits_{M}u^{N-2}(v^{2}-v_{m}^{2})\ dv_{g}\rightarrow 0\ \text{\ }
\end{equation*}%
so$\ \ $%
\begin{equation*}
\dint\limits_{M}u^{N-2}v^{2}dv_{g}=1\text{.}
\end{equation*}

$v$\ is a non negative minimizer in $H_{1}^{2}$ of $\lambda _{1,\overline{g}%
} $\ such that $\dint\limits_{M}u^{N-2}v^{2}dv_{g}=1$.

Now consider the set%
\begin{equation*}
E=\{w\in H_{1}^{2}\text{: such that }u^{\frac{N-2}{2}}w\neq 0\ \text{and}%
\dint\limits_{M}u^{N-2}wvdv_{g}=0\}
\end{equation*}%
and define%
\begin{equation*}
\lambda _{2,g}^{\prime }=\inf_{w\in E}\frac{\int_{M}wL_{g}(w)dv_{g}}{%
\int_{M}|u|^{N-2}w^{2}dv_{g}}\text{.}
\end{equation*}%
Let ($w_{m}$)\ be a minimizing sequence for $\lambda _{2,g}^{\prime }$\ i.e.
a sequence $w_{m}\in E$ such that

\begin{equation*}
\lim_{m}\frac{\int_{M}w_{m}L_{g}(w_{m})dv_{g}}{%
\int_{M}|u|^{N-2}w_{m}^{2}dv_{g}}=\lambda _{2,g}^{\prime }\text{.}
\end{equation*}

The same arguments lead to a minimizer $w$ \ to $\lambda _{2,g}^{\prime }$%
with $\dint\limits_{M}u^{N-2}w^{2}=1$.

Now writing 
\begin{equation*}
\dint\limits_{M}u^{N-2}wvdv_{g}=\dint\limits_{M}u^{N-2}v(w-w_{m})dv_{g}+%
\dint\limits_{M}u^{N-2}w_{m}vdv_{g}
\end{equation*}

and taking account of $\dint\limits_{M}u^{N-2}w_{m}vdv_{g}=0$ and the fact
that $w_{m}\rightarrow w$\ weakly in $L^{N}\left( M\right) $ and since $%
u^{N-2}v\in L^{\frac{N}{N-1}}\left( M\right) $, we infer that%
\begin{equation*}
\dint\limits_{M}u^{N-2}wvdv_{g}=0\text{.}
\end{equation*}

Hence (\ref{8}) and (\ref{9}) are that satisfied with $\lambda
_{2,g}^{\prime }$\ instead of $\lambda _{2,\overline{g}}$.
\end{proof}

\begin{proposition}
We have 
\begin{equation*}
\lambda _{2,g}^{\prime }=\lambda _{2,\overline{g}}\text{.}
\end{equation*}%
.
\end{proposition}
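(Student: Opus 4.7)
The plan is to establish both inequalities $\lambda_{2,\bar{g}} \leq \lambda_{2,g}'$ and $\lambda_{2,g}' \leq \lambda_{2,\bar{g}}$ by the usual Courant-style min-max trick, adapted to the generalized-metric setting. For the first direction I will exhibit a specific two-plane of test functions; for the reverse I will use a dimension-count argument to extract, from an arbitrary two-plane in $Gr_{2}^{u}(H_{1}^{2}(M))$, a function lying in the admissible set $E$.

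For $\lambda_{2,\bar{g}} \leq \lambda_{2,g}'$, I would take the two functions $v,w$ produced by Proposition \ref{prop2}, which satisfy the weak eigenvalue equations (\ref{7})--(\ref{8}) and the orthonormalization (\ref{9}), and set $V = \mathrm{span}(v,w)$. First I would check $V \in Gr_{2}^{u}(H_{1}^{2}(M))$: if a nontrivial combination $av+bw$ vanished on $M \setminus u^{-1}(0)$, then pairing against $u^{N-2}v$ and $u^{N-2}w$ and invoking the normalizations (\ref{9}) would force $a=b=0$. Next, self-adjointness of $L_{g}$ together with the eigenvalue equations give $\int_{M} v\, L_{g}w\, dv_{g} = \lambda_{2,g}' \int_{M} u^{N-2} v w\, dv_{g} = 0$, so for $\phi = av + bw$ the Rayleigh quotient collapses to $(a^{2}\lambda_{1,\bar{g}} + b^{2}\lambda_{2,g}')/(a^{2}+b^{2})$. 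Since $E$ is a subset of $H_{1}^{2}\setminus\{0\}$, one has $\lambda_{1,\bar{g}} \leq \lambda_{2,g}'$, whence the supremum over $V$ equals $\lambda_{2,g}'$ and the desired inequality follows.

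For the reverse inequality, I would take an arbitrary $V \in Gr_{2}^{u}(H_{1}^{2}(M))$ and consider the linear functional $\Lambda(\phi) = \int_{M} u^{N-2}\phi\, v\, dv_{g}$ on $V$. Because $\dim V = 2$, $\ker\Lambda$ contains a nonzero element $\phi_{0}$. By the defining property of $Gr_{2}^{u}$, $\phi_{0}$ does not vanish identically on $M \setminus u^{-1}(0)$, hence $u^{(N-2)/2}\phi_{0} \not\equiv 0$ and $\phi_{0} \in E$. Its Rayleigh quotient is therefore bounded below by $\lambda_{2,g}'$ and is a legitimate candidate in the supremum over $V$; taking the infimum over $V \in Gr_{2}^{u}$ then yields $\lambda_{2,\bar{g}} \geq \lambda_{2,g}'$.

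The only delicate point, and the step where the argument could break without care, is the degeneracy of the weight $u^{N-2}$ on $u^{-1}(0)$. The definition of $Gr_{k}^{u}$ --- requiring linear independence on $M \setminus u^{-1}(0)$ rather than on $M$ itself --- is engineered precisely so that every nonzero $\phi$ in some $V \in Gr_{2}^{u}$ still has $\int_{M} u^{N-2}\phi^{2}\, dv_{g} > 0$, and so that the nonzero kernel element $\phi_{0}$ produced above remains a genuine test function in $E$. Once this compatibility is verified, the rest is routine min-max bookkeeping.
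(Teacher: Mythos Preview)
Your argument is correct: the two inequalities via the explicit two-plane $\mathrm{span}(v,w)$ and the kernel-of-a-linear-functional trick constitute exactly the standard Courant--Fischer comparison, and your handling of the degenerate weight through the definition of $Gr_{2}^{u}$ is precisely the point that needs checking. The paper itself does not supply a proof here---it refers to \cite{3}, where essentially the same min-max computation is carried out in the Paneitz--Branson setting---so your write-up is in fact more complete than what the present paper provides, while following the same underlying route.
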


\begin{proof}
The Proof is the same as in (\cite{3}) so we omit it.
\end{proof}

\begin{remark}
\label{rem1} If $p>n$ then$\ \ u\in H_{2}^{p}\left( M\right) \subset
C^{1}\left( M\right) $, by Theorem \ref{9}, $v$ and $w\in C^{1}\left(
M\right) $ with $\ v>0$.
\end{remark}

\begin{remark}
\label{rem2} If $p>n$ then$\ \ u\in H_{2}^{p}\left( M\right) \subset
C^{1}\left( M\right) $\ and $\ \lambda _{2,\overline{g}}=\lambda _{1,%
\overline{g}}$, we see that $\ |w|$\ is a minimizer for the functional
associated to $\lambda _{1,\overline{g}}$, then $|w|$\ satisfies the same
equation as$\ v$\ \ and by Theorem \ref{9} $\ $we get $\left\vert
w\right\vert >0$, this contradicts relation (\ref{9})\ necessarily 
\begin{equation*}
\ \lambda _{2,\overline{g}}>\lambda _{1,\overline{g}}\text{.}
\end{equation*}
\end{remark}

\section{Variational\ characterization\ and\ existence\ of\ \ $\protect\mu %
_{1}$}

In this section we need the following results

\begin{theorem}
\label{th5} Let $\left( M,g\right) $ be a compact n -dimensional Riemannian
manifold. For any $\varepsilon >0,$\ there exists $A(\varepsilon )>0$\ such
that $\forall u\in H_{1}^{2}\left( M\right) $,%
\begin{equation*}
\Vert u\Vert _{N}^{2}\leq (K^{2}+\varepsilon )\Vert \nabla u\Vert
_{2}^{2}+A(\varepsilon )\Vert u\Vert _{2}^{2}
\end{equation*}%
where $N=2n/(n-4)$\ and $K^{2}=4/(n(n-2))$\ $\omega _{n}^{\frac{-2}{n}}$. $%
\omega _{n}$ is the volume of the round sphere\ $S_{n}$.
\end{theorem}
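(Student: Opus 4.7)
This is the classical sharp Sobolev inequality of Hebey--Vaugon on a compact Riemannian manifold, so the plan is to reduce to the sharp Euclidean Sobolev inequality via a partition-of-unity argument engineered not to degrade the best constant.

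First I would recall the Aubin--Talenti Euclidean inequality: for every $v\in C^{\infty}_{c}(\mathbb{R}^{n})$, $\Vert v\Vert_{L^{N}(\mathbb{R}^{n})}^{2}\leq K^{2}\Vert\nabla v\Vert_{L^{2}(\mathbb{R}^{n})}^{2}$ with $K^{2}$ the sharp constant stated in the theorem; this is the only source of $K^{2}$. Next I would localize: fix $\varepsilon>0$, and since $g$ is at least continuous and $M$ is compact, cover $M$ by finitely many geodesic balls $B_{x_{i}}(r)$, $i=1,\dots,m$, with $r=r(\varepsilon)$ so small that in normal coordinates around $x_{i}$ one has $(1-\eta)\delta\leq g\leq(1+\eta)\delta$ as quadratic forms, the pinching $\eta=\eta(\varepsilon)$ being chosen so that transporting the Euclidean inequality through the chart (including the volume element and the metric dual on $1$-forms) worsens the Sobolev constant by a factor no larger than $1+\varepsilon/(2K^{2})$. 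Finally I would fix a smooth partition of unity $(\eta_{i}^{2})_{i}$ subordinate to this cover, so that $\sum_{i}\eta_{i}^{2}=1$ and $\operatorname{supp}\eta_{i}\subset B_{x_{i}}(r)$.

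The key algebraic step is the identity $u^{2}=\sum_{i}(\eta_{i}u)^{2}$. Since $N/2\geq 1$, Minkowski's inequality in $L^{N/2}$ gives
\begin{equation*}
\Vert u\Vert_{N}^{2}=\Vert u^{2}\Vert_{N/2}\leq\sum_{i}\Vert(\eta_{i}u)^{2}\Vert_{N/2}=\sum_{i}\Vert\eta_{i}u\Vert_{N}^{2}.
\end{equation*}
Each $\eta_{i}u$ is supported in a near-Euclidean ball, so pulling it back through the normal chart, applying the Euclidean inequality, and using the pinching above yields $\Vert\eta_{i}u\Vert_{N}^{2}\leq(K^{2}+\varepsilon/2)\Vert\nabla(\eta_{i}u)\Vert_{2}^{2}$. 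Summing and using $\sum_{i}\eta_{i}^{2}=1$, which forces $\sum_{i}\eta_{i}\nabla\eta_{i}=0$ and $\sum_{i}\eta_{i}^{2}|\nabla u|^{2}=|\nabla u|^{2}$, the cross terms cancel and one is left with
\begin{equation*}
\sum_{i}|\nabla(\eta_{i}u)|^{2}=|\nabla u|^{2}+\Bigl(\sum_{i}|\nabla\eta_{i}|^{2}\Bigr)u^{2},
\end{equation*}
where $\sum_{i}|\nabla\eta_{i}|^{2}\leq C(\varepsilon)$ uniformly on $M$ because the partition is finite. Integrating gives the theorem with $A(\varepsilon):=(K^{2}+\varepsilon/2)\,C(\varepsilon)$, after an innocuous relabeling $\varepsilon/2\mapsto\varepsilon$.

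The main obstacle is ensuring that the constant on the right really is $K^{2}+\varepsilon/2$ and not some uncontrolled number. A naive triangle inequality $\Vert u\Vert_{N}\leq\sum_{i}\Vert\eta_{i}u\Vert_{N}$ would cost a factor depending on the number of charts and would destroy the sharpness of $K$; working with $u^{2}$ and Minkowski in $L^{N/2}$ is exactly what preserves the optimal constant. Together with the careful shrinking of the charts so that the local Jacobian distortion is arbitrarily close to $1$, this is the decisive ingredient of Hebey--Vaugon and the only genuinely delicate point of the argument.
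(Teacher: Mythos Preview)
Your argument is the classical Aubin partition-of-unity proof and is correct as written; the crucial device of applying Minkowski to $u^{2}$ in $L^{N/2}$ rather than to $u$ in $L^{N}$ is exactly what keeps the leading constant equal to $K^{2}+\varepsilon$, and the computation $\sum_{i}|\nabla(\eta_{i}u)|^{2}=|\nabla u|^{2}+\bigl(\sum_{i}|\nabla\eta_{i}|^{2}\bigr)u^{2}$ is handled cleanly.

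As for comparison with the paper: the paper does not prove this theorem at all. It is quoted as a known input (``In this section we need the following results'') and implicitly referred to Hebey's book \cite{4}; the result is Aubin's sharp Sobolev inequality on compact manifolds. So there is nothing in the paper to compare your route against---you have supplied precisely the standard proof that the cited reference would give. One small remark: the statement in the paper carries an evident typo, $N=2n/(n-4)$, whereas the intended critical exponent throughout the paper is $N=2n/(n-2)$; your proof is of course for the correct exponent.
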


Let $\ [g]=\{u^{N-2}g$\ : $u\in H_{2}^{p}\left( M\right) $\ and $u>0\},$\ we
define the first singular Yamabe invariant $\mu _{1}$\ as

\begin{equation*}
\mu
_{1}=\underset{\overline{g}\in \lbrack g]}{\inf }\lambda _{1,\overline{g}%
}Vol(M,\tilde{g})^{^{2/n}}
\end{equation*}

then we get

\begin{equation*}
\mu
_{1}=\underset{u\in H_{2}^{p},V\in Gr_{1}^{u}(H_{1}^{2})}{\inf }\underset{%
v\in V}{\text{ }\sup }\frac{\int_{M}vL_{g}(v)dv_{g}}{%
\int_{M}|u|^{N-2}v^{2}dv_{g}}(\dint\limits_{M}u^{N}dv_{g})^{\frac{2}{n}}%
\text{.}
\end{equation*}

\begin{lemma}
\label{lem6} We have 
\begin{equation*}
\mu
_{1}\leq 
\mu
<K^{-2}\text{.}
\end{equation*}%
.
\end{lemma}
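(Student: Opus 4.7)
The plan is to treat the two inequalities of Lemma \ref{lem6} separately.

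The right-hand inequality $\mu < K^{-2}$ is essentially already recorded: it is the content of Theorem 8 above (due to F. Madani, \cite{5}), which applies whenever $(M,g)$ is not conformal to the round sphere $S_n$; since this is the tacit hypothesis throughout the setting where Lemma \ref{lem6} is used, no further argument is required for this half of the statement.

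For the left-hand inequality $\mu_1 \leq \mu$, the strategy is a single test-function argument. Given an arbitrary $u \in H_{2}^{p}(M)$ with $u > 0$, consider the conformal metric $\overline{g} = u^{N-2}g \in [g]$. Since $u > 0$ everywhere on $M$, the one-dimensional subspace $V = \mathbb{R}\,u \subset H_{1}^{2}(M)$ lies in $Gr_{1}^{u}(H_{1}^{2}(M))$, so by the variational characterization recalled just before the lemma,
\begin{equation*}
\mu_1 \leq \lambda_{1,\overline{g}}\,\mathrm{Vol}(M,\overline{g})^{2/n} \leq \sup_{v \in V} \frac{\int_M v L_g(v)\,dv_g}{\int_M u^{N-2} v^{2}\,dv_g} \cdot \left(\int_M u^N\,dv_g\right)^{2/n}.
\end{equation*}
The Rayleigh quotient on $V$ is scale-invariant, so its supremum is attained at $v = u$. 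Using $\int_M u^{N-2}u^{2}\,dv_g = \int_M u^{N}\,dv_g = \mathrm{Vol}(M,\overline{g})$ and the integration-by-parts identity
\begin{equation*}
\int_M u L_g(u)\,dv_g = \int_M \left( |\nabla u|^{2} + \frac{n-2}{4(n-1)} S_g u^{2} \right) dv_g,
\end{equation*}
this upper bound collapses to $Y(u)$. Taking the infimum over all admissible $u \in H_{2}^{p}(M)$ with $u > 0$ then yields $\mu_1 \leq \mu$.

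There is no deep obstacle: the inequality is essentially a tautology of the variational definitions, since the Yamabe functional $Y(u)$ is exactly the first Rayleigh quotient for $L_g$ against the weight $u^{N-2}$, renormalized by the volume of $\overline{g}$. The only minor technical point to verify is that the integration by parts above is legitimate in the low-regularity setting $g \in H_{2}^{p}$, $u \in H_{2}^{p}$, $S_g \in L^{p}$; this follows from density of smooth functions in $H_{2}^{p}$ combined with the Sobolev embeddings recalled in the introduction, and also from the fact that $L_g$ is weakly conformally invariant and coercive (Proposition \ref{pro1}).
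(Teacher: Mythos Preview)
Your proposal is correct and follows essentially the same route as the paper's own proof: both arguments specialize the variational definition of $\mu_1$ to the one-dimensional subspace $V=\mathbb{R}\,u$ (equivalently, take $v=u$ as test function), observe that the resulting Rayleigh quotient times the volume factor is exactly $Y(u)$, and then take the infimum over $u\in H_2^p$, $u>0$; the strict inequality $\mu<K^{-2}$ is in both cases simply quoted from Madani's result. The only cosmetic difference is that the paper first records the embedding $H_2^p\subset H_1^2$ and passes through the intermediate bound $\inf_{u,V\in Gr_1^u(H_2^p)}(\cdots)$, whereas you go directly to the test subspace; the content is identical.
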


\begin{proof}
If $\ p\geq 2n/(n+2)$, the embedding $H_{2}^{p}\left( M\right) \subset
H_{1}^{2}$ $\left( M\right) $ is true, so%
\begin{equation*}
\mu
_{1}=\underset{u\in H_{2}^{p},V\in Gr_{1}^{u}(H_{1}^{2}\left( M\right) )}{%
\inf }\underset{v\in V}{\text{ }\sup }\frac{\int_{M}vL_{g}(v)dv_{g}}{%
\int_{M}|u|^{N-2}v^{2}dv_{g}}(\dint\limits_{M}u^{N}dv_{g})^{\frac{2}{n}}
\end{equation*}%
\begin{equation*}
\leq \underset{u\in H_{2}^{p},V\in Gr_{1}^{u}(H_{2}^{p}\left( M\right) )}{%
\inf }\underset{v\in V}{\text{ }\sup }\frac{\int_{M}vL_{g}(v)dv_{g}}{%
\int_{M}|u|^{N-2}v^{2}dv_{g}}(\dint\limits_{M}u^{N}dv_{g})^{\frac{2}{n}}%
\text{.}
\end{equation*}%
in particular for $p>\frac{n}{2}$ and $u=v$ \ we get \ 
\begin{equation*}
\mu
_{1}\leq \underset{v\in H_{2}^{P},V\in Gr_{1}^{u}(H_{2}^{P}\left( M\right) )}%
{\inf }\underset{v\in V}{\text{ }\sup }\frac{\int_{M}vL_{g}(v)dv_{g}}{%
\int_{M}|v|^{N-2}v^{2}dv_{g}}(\dint\limits_{M}v^{N}dv_{g})^{\frac{2}{n}}=%
\mu%
\end{equation*}%
i.e%
\begin{equation*}
\mu
_{1}\leq 
\mu
<K^{-2}\text{.}
\end{equation*}
\end{proof}

\begin{theorem}
If $\ \ 
\mu
>0$, there exits conform metric $\overline{g}=u^{N-2}g$\ which minimizes $\ 
\mu
_{1}$.
\end{theorem}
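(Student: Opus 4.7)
\emph{Step 1: the identity $\mu_{1}=\mu$.} The plan is to exhibit the desired minimizer as a rescaling of the singular Yamabe minimizer. For this I first show $\mu_{1}\geq \mu$; combined with Lemma \ref{lem6} this forces $\mu_{1}=\mu$. The key observation is that $N=2n/(n-2)$ satisfies $(N-2)/N=2/n$, so that $N/(N-2)$ and $N/2$ are H\"{o}lder conjugate exponents with precisely the right powers. For every $u\in L_{+}^{N}(M)$ and every $v\in H_{1}^{2}(M)\setminus\{0\}$, H\"{o}lder's inequality gives
$$\int_{M}u^{N-2}v^{2}\,dv_{g}\leq \Bigl(\int_{M}u^{N}\,dv_{g}\Bigr)^{2/n}\Bigl(\int_{M}v^{N}\,dv_{g}\Bigr)^{2/N}.$$
Dividing the Rayleigh quotient for $v$ by this estimate and multiplying by $(\int u^{N})^{2/n}$ yields
$$\frac{\int_{M}vL_{g}(v)\,dv_{g}}{\int_{M}u^{N-2}v^{2}\,dv_{g}}\Bigl(\int_{M}u^{N}\,dv_{g}\Bigr)^{2/n}\geq \frac{\int_{M}vL_{g}(v)\,dv_{g}}{\bigl(\int_{M}v^{N}\,dv_{g}\bigr)^{2/N}}=Y(v)\geq \mu,$$
and taking the infimum over $u$ and $v$ produces $\mu_{1}\geq \mu$.

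\emph{Step 2: exhibiting the minimizer.} Since $0<\mu<K^{-2}$ by Lemma \ref{lem6}, Madani's existence theorem cited earlier in the excerpt furnishes a positive $u\in H_{2}^{p}(M)$ with $Y(u)=\mu$. After rescaling I may normalize $\int_{M}u^{N}\,dv_{g}=1$, so that the conformal metric $\overline{g}=u^{N-2}g\in[g]$ has unit volume. The Euler--Lagrange equation reads $L_{g}u=\mu u^{N-1}=\mu u^{N-2}\cdot u$, so $u$ is a strictly positive admissible test function in the Rayleigh quotient defining $\lambda_{1,\overline{g}}$, giving
$$\lambda_{1,\overline{g}}\leq \frac{\int_{M}uL_{g}(u)\,dv_{g}}{\int_{M}u^{N-2}u^{2}\,dv_{g}}=\frac{\mu\int_{M}u^{N}\,dv_{g}}{\int_{M}u^{N}\,dv_{g}}=\mu.$$
Combining this with Step 1 produces
$$\mu_{1}\leq \lambda_{1,\overline{g}}\,Vol(M,\overline{g})^{2/n}=\lambda_{1,\overline{g}}\leq \mu=\mu_{1},$$
so equality holds throughout and $\overline{g}$ attains $\mu_{1}$.

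The principal technical input is thus Madani's existence theorem for the singular Yamabe minimizer, which itself relies on the strict inequality $\mu<K^{-2}$ to rule out concentration at the scale of the round sphere. Once that is granted, the argument reduces to two elementary lines: a single application of H\"{o}lder's inequality on one side and the variational characterization of the first weighted eigenvalue on the other, with no compactness or bubbling analysis required at this level.
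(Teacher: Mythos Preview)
Your argument is correct and takes a genuinely different route from the paper. The paper proceeds by direct minimization: it picks a minimizing sequence $g_{m}=u_{m}^{N-2}g$ for $\mu_{1}$, extracts weak limits $u\in L^{N}$ and $v\in H_{1}^{2}$ of $u_{m}$ and of the first eigenfunctions $v_{m}$, proves strong convergence $v_{m}\to v$ in $H_{1}^{2}$ via a Brezis--Lieb/Sobolev argument under $\mu_{1}<K^{-2}$, and finally identifies $u=v$ through the equality case of H\"older's inequality, so that regularity for the Yamabe-type equation forces $u\in H_{2}^{p}$. Only afterwards does the paper deduce $\mu_{1}=\mu$ as a separate corollary, using the minimizer just constructed. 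You reverse the logic: the single H\"older estimate (valid because $\mu>0$ makes the numerator nonnegative) gives $\mu_{1}\geq\mu$ up front, and then Madani's already-established singular Yamabe minimizer serves directly as the minimizer for $\mu_{1}$. This bypasses all of the compactness and strong-convergence analysis; the price is that you import Madani's theorem as a black box, whereas the paper's approach is self-contained at this step and illustrates the concentration-compactness machinery that is reused in the harder $\mu_{2}$ case.
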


\begin{proof}
The proof will take several steps.

\textit{Step1.}

We study a sequence of metrics $g_{m}=u_{m}^{N-2}g$\ \ with $\ u_{m}\in
H_{2}^{p}\left( M\right) $, $u_{m}>0$ which minimize $\ 
\mu
_{1}$\ i.e. a sequence of metrics such that

\begin{equation*}
\mu
_{1}=\lim_{m}\lambda _{1,m}(Vol(M,g_{m})^{2/n}\text{.}
\end{equation*}

Without loss of generality, we may assume that $Vol(M,g_{m})=1$\ i.e. 
\begin{equation*}
\dint\limits_{M}u_{m}^{N}dv_{g}=1\text{.}
\end{equation*}

In particular, the sequence of functions $u_{m}$\ is bounded in $L^{N}\left(
M\right) $\ and there exists $u\in L^{N}\left( M\right) $, $u\geq 0$ \ such
that $u_{m}\rightarrow u$\ \ weakly in $\ L^{N}\left( M\right) $. We are
going to prove that the generalized metric $\ u^{N-2}g$\ \ minimizes $%
\mu
_{1}$. Proposition \ref{prop2} implies the existence of $\ $a sequence $%
\left( v_{m}\right) $ of class $H_{1}^{2}\left( M\right) $, $v_{m}>0$\ \
such that

\begin{equation*}
L_{g}(v_{m})=\lambda _{1,m}u_{m}^{N-2}v_{m}
\end{equation*}%
\ and $\ $%
\begin{equation*}
\int_{M}u_{m}^{N-2}v_{m}^{2}dv_{g}=1\text{.}
\end{equation*}

now since $\mu >0$, by Proposition \ref{pro1}, $L_{g}$ is coercive and we
infer that%
\begin{equation*}
c\Vert v_{m}\Vert _{H_{1}^{2}}\leq \int_{M}v_{m}L_{g}(v_{m})dv_{g}=\lambda
_{1,m}\leq 
\mu
_{1}+1\text{.}
\end{equation*}

The sequence $\left( v_{m}\right) _{m}$ is bounded in $H_{1}^{2}\left(
M\right) $, we can find $v\in H_{1}^{2}\left( M\right) $, $v\geq 0$ such
that $v_{m}\rightarrow v$\ weakly in $H_{1}^{2}\left( M\right) $. Together
with the weak convergence of $\left( u_{m}\right) _{m}$, we obtain in the
sense of distributions 
\begin{equation*}
L_{g}(v)=%
\mu
_{1}u^{N-2}v\text{.}
\end{equation*}

\textit{Step2}.

Now we are going to show that $v_{m}\rightarrow v$\ strongly in $%
H_{1}^{2}\left( M\right) $.

We put 
\begin{equation*}
z_{m}=v_{m}-v\ 
\end{equation*}%
then $z_{m}\rightarrow 0$ weakly in $H_{1}^{2}\left( M\right) $ and strongly
in $L^{q}\left( M\right) $ \ with $q<N$, and writing

\begin{equation*}
\int_{M}\left\vert \nabla v_{m}\right\vert ^{2}dv_{g}=\int_{M}\left\vert
\nabla z_{m}\right\vert ^{2}dv_{g}+\int_{M}\left\vert \nabla v\right\vert
^{2}dv_{g}+2\int_{M}\nabla z_{m}\nabla vdv_{g}
\end{equation*}

we see that%
\begin{equation*}
\int_{M}\left\vert \nabla v_{m}\right\vert ^{2}dv_{g}=\int_{M}\left\vert
\nabla z_{m}\right\vert ^{2}dv_{g}+\int_{M}\left\vert \nabla v\right\vert
^{2}dv_{g}+o(1)\text{.}
\end{equation*}

Now because of $2p/(p-1)<N$ , we have 
\begin{equation*}
\int_{M}\frac{n-2}{4(n-1)}S_{g}(v_{m}-v)^{2}dv_{g}\leq \frac{n-2}{4(n-1)}%
\Vert S_{g}\Vert _{p}\Vert v_{m}-v\Vert _{\frac{2p}{p-1}}^{2}\rightarrow 0
\end{equation*}

so%
\begin{equation*}
\int_{M}\frac{n-2}{4(n-1)}S_{g}v_{m}{}^{2}dv_{g}=\int_{M}\frac{n-2}{4(n-1)}%
S_{g}v^{2}dv_{g}+0(1)
\end{equation*}%
and

\begin{equation*}
\int_{M}\left\vert \nabla v_{m}\right\vert ^{2}dv_{g}+\int_{M}\frac{n-2}{%
4(n-1)}S_{g}(v_{m})^{2}dv_{g}
\end{equation*}

\begin{equation*}
=\int_{M}\left\vert \nabla z_{m}\right\vert ^{2}dv_{g}+\int_{M}\left\vert
\nabla v\right\vert ^{2}dv_{g}+\int_{M}\frac{n-2}{4(n-1)}%
S_{g}(v)^{2}dv_{g}+0(1)\text{.}
\end{equation*}

Then%
\begin{equation*}
\int_{M}v_{m}L_{g}v_{m}dv_{g}=\int_{M}\left\vert \nabla z_{m}\right\vert
^{2}dv_{g}+\int_{M}\left\vert \nabla v\right\vert ^{2}dv_{g}+\int_{M}\frac{%
n-2}{4(n-1)}S_{g}v^{2}dv_{g}+0(1)
\end{equation*}

And by the definition of $%
\mu
$ and Lemma \ref{lem6} we get 
\begin{equation*}
\int_{M}\left\vert \nabla v\right\vert ^{2}dv_{g}+\int_{M}\frac{n-2}{4(n-1)}%
S_{g}(v)^{2}dv_{g}\geq 
\mu
(\dint\limits_{M}v^{N}dv_{g})^{\frac{2}{N}}\geq 
\mu
_{1}(\dint\limits_{M}v^{N}dv_{g})^{\frac{2}{N}}
\end{equation*}

then

\begin{equation*}
\int_{M}v_{m}L_{g}(v_{m})dv_{g}\geq \int_{M}\left\vert \nabla
z_{m}\right\vert ^{2}dv_{g}+%
\mu
_{1}(\dint\limits_{M}v^{N}dv_{g})^{\frac{2}{N}}+0(1)\text{.}
\end{equation*}

And since 
\begin{equation*}
\int_{M}v_{m}L_{g}(v_{m})dv_{g}=\lambda _{1,m}\leq 
\mu
_{1}+0(1)
\end{equation*}

and%
\begin{equation*}
\int_{M}\left\vert \nabla z_{m}\right\vert ^{2}dv_{g}+%
\mu
_{1}(\dint\limits_{M}v^{N}dv_{g})^{\frac{2}{N}}\leq 
\mu
_{1}+0(1)
\end{equation*}%
i.e%
\begin{equation}
\mu
_{1}\Vert v\Vert _{N}^{2}+\Vert \nabla z_{m}\Vert _{2}^{2}\leq 
\mu
_{1}+0(1)  \tag{10}  \label{10}
\end{equation}

Now by Brezis-Lieb lemma, we get

\begin{equation*}
\lim_{m}\ \int_{M}\ v_{m}^{N}+z_{m}^{N}dv_{g}=\dint\limits_{M}v^{N}dv_{g}
\end{equation*}

i.e. 
\begin{equation*}
\lim_{m}\text{ }\Vert v_{m}\Vert _{N}^{N}-\Vert z_{m}\Vert _{N}^{N}=\Vert
v\Vert _{N}^{N}\text{.}
\end{equation*}

Hence 
\begin{equation*}
\Vert v_{m}\Vert _{N}^{N}+0(1)=\Vert z_{m}\Vert _{N}^{N}+\Vert v\Vert
_{N}^{N}\text{.}
\end{equation*}

By H\"{o}lder's inequality and $\int_{M}u_{m}^{N-2}v_{m}^{2}dv_{g}=1$, we get

$\ $%
\begin{equation*}
\ \Vert v_{m}\Vert _{N}^{N}\geq 1
\end{equation*}

i.e. $\ $%
\begin{equation*}
\int_{M}\ v^{N}+z_{m}^{N}\text{ }dv_{g}=\int_{M}\ v_{m}^{N}dv_{g}+0(1)\geq
1+0(1)\text{.}
\end{equation*}

Then%
\begin{equation*}
\left( \int_{M}v^{N}dv_{g}\right) ^{\frac{2}{N}}+\left(
\int_{M}z_{m}^{N}dv_{g}\right) ^{\frac{2}{N}}\geq 1+0(1)
\end{equation*}

i.e.

\begin{equation*}
\Vert z_{m}\Vert _{N}^{2}+\Vert v\Vert _{N}^{2}\geq 1+0(1)\text{.}
\end{equation*}

Now by Theorem \ref{th5} and the fact $\ z_{m}\rightarrow 0$\ \ strongly in $%
\ L^{2}$, we get

\begin{equation*}
\Vert z_{m}\Vert _{N}^{2}\leq (K^{2}+\varepsilon )\Vert \nabla z_{m}\Vert
_{2}^{2}+o(1)
\end{equation*}

\begin{equation*}
1+o(1)\leq \Vert z_{m}\Vert _{N}^{2}+\Vert v\Vert _{N}^{2}\leq \Vert v\Vert
_{N}^{2}+(K^{2}+\varepsilon )\Vert \nabla z_{m}\Vert _{2}^{2}+o(1)\text{.}
\end{equation*}

So we deduce

\begin{equation*}
1+o(1)\leq \Vert v\Vert _{N}^{2}+(K^{2}+\varepsilon )\Vert \nabla z_{m}\Vert
_{2}^{2}+o(1)
\end{equation*}

and from inequality (\ref{10}), we get

\begin{equation*}
\Vert \nabla z_{m}\Vert _{2}^{2}+%
\mu
_{1}\Vert v\Vert _{N}^{2}\leq 
\mu
_{1}((K^{2}+\varepsilon )\Vert \nabla z_{m}\Vert _{2}^{2}+\Vert v\Vert
_{N}^{2})+o(1)\text{.}
\end{equation*}%
So if $\mu _{1}K^{2}<1$, we get

\begin{equation*}
(1-%
\mu
_{1}(K^{2}+\varepsilon ))\Vert \nabla z_{m}\Vert _{2}^{2})\leq 0(1)
\end{equation*}

i.e. $\ v_{m}\rightarrow v$\ \ strongly in $H_{1}^{2}\left( M\right) $.

\textit{Step3}. We have

\begin{equation*}
\lim_{m}\int_{M}\left(
u_{m}^{N-2}v_{m}^{2}-u^{N-2}v^{2}+u_{m}^{N-2}v^{2}-u_{m}^{N-2}v^{2}\right)
dv_{g}
\end{equation*}

\begin{equation*}
=\lim_{m}\int \left(
u_{m}^{N-2}(v_{m}^{2}-v^{2})+(u_{m}^{N-2}-u^{N-2})v^{2}\right) dv_{g}\text{.}
\end{equation*}%
\newline

Now since $\ u_{m}\rightarrow u$\ \ a.e. so does $\ u_{m}^{N-2}\rightarrow u$%
\ $^{N-2}$\ \ and $\ \int_{M}u_{m}^{N-2}dv_{g}\leq c$,

hence $u_{m}^{N-2}$\ is bounded in $L^{N/(N-2\text{)}}$ and up to a
subsequence $u_{m}^{N-2}\rightarrow u$\ $^{N-2}$ \ \ weakly in $L^{N/(N-2%
\text{)}}$. Because of $v^{2}\in L^{\frac{N}{2}}(M)$, we have 
\begin{equation*}
\lim_{m}\int (u_{m}^{N-2}-u^{N-2})v^{2}dv_{g}=0\ 
\end{equation*}%
and by H\"{o}lder's inequality

\begin{equation*}
\lim_{m}\int u_{m}^{N-2}(v_{m}-v)^{2}dv_{g}\leq (\int
u_{m}^{N}dv_{g})^{(N-2)/N}(\int_{m}\left\vert v_{m}-v\right\vert
^{N}dv_{g})^{\frac{2}{N}}\leq 0\text{.}
\end{equation*}%
\ By the strong convergence of $v_{m}$\ $\ $in\ $\ L^{N}\left( M\right) $,
we get $\int_{M}u^{N-2}v^{2}dv_{g}=1$, then $v$\ \ and $u$\ \ are non
trivial functions.

\textit{Step4}.

Let $\overline{u}=av\in L_{+}^{N}\left( M\right) $\ with $a>0$\ a constant
such that $\dint\limits_{M}\overline{u}^{N}dv_{g}=1$\ with $v$\ a solution of

\begin{equation*}
L_{g}(v)=%
\mu
_{1}u^{N-2}v
\end{equation*}%
\ with the constraint%
\begin{equation*}
\int_{M}u^{N-2}v^{2}dv_{g}=1\text{.}
\end{equation*}%
\ We claim that $u=v$; indeed,

\begin{equation*}
\mu
_{1}\leq \frac{\int_{M}vL_{g}(v)dv_{g}}{\int_{M}\overline{u}^{N-2}v^{2}dv_{g}%
}
\end{equation*}%
\begin{equation*}
\leq \frac{\int_{M}vL_{g}(v)dv_{g}}{\int_{M}(av)^{N-2}v^{2}dv_{g}}=\frac{%
a^{2}%
\mu
_{1}\int_{M}u^{N-2}v^{2}dv_{g}}{\int_{M}\overline{u}^{N-2}(av)^{2}dv_{g}}
\end{equation*}%
and H\"{o}lder's inequality lead 
\begin{equation*}
\leq 
\mu
_{1}\int_{M}(u)^{N-2}(av)^{2}dv_{g}
\end{equation*}

\begin{equation*}
\leq 
\mu
_{1}(\int_{M}(u)^{N-2\frac{N}{N-2}})^{\frac{N-2}{N}}(\int_{M}(av)^{2\frac{N}{%
2}}dv_{g})^{\frac{2}{N}}\leq 
\mu
_{1}\text{.}
\end{equation*}

And since the equality in H\"{o}lder's inequality holds if 
\begin{equation*}
\overline{u}=u=av
\end{equation*}%
then\ $a=1$\ and 
\begin{equation*}
u=v\ \text{.}
\end{equation*}

Then $v$ satisfies \ $L_{g}v=%
\mu
_{1}v^{N-1}$ , by Theorem \ref{th6} we get \ $\ v=u\in H_{2}^{p}\left(
M\right) \subset C^{1-\left[ \frac{n}{p}\right] ,\beta }\left( M\right) $
with $\beta \in \left( 0,1\right) $\ and $v=u>0$\ ,

Resuming, we have%
\begin{equation*}
L_{g}(v)=%
\mu
_{1}v^{N-1}\ \text{,\ \ \ }\int_{M}v^{N}dv_{g}=1\text{ \ and }\ v=u\in
H_{2}^{p}\left( M\right) \subset C^{1-\left[ \frac{n}{p}\right] ,\beta
}\left( M\right)
\end{equation*}%
so the metric $\tilde{g}=u^{N-2}g$\ minimizes $%
\mu
_{1}$.
\end{proof}

\section{\ Yamabe\ conformal\ invariant\ with\ singularities}

\begin{theorem}
$If$\ $\ 
\mu
\geq 0$, then $%
\mu
_{1}=%
\mu
$
\end{theorem}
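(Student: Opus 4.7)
The inequality $\mu_1 \leq \mu$ is already Lemma \ref{lem6}, so what remains is to establish the reverse inequality $\mu \leq \mu_1$. The plan is to prove
$$\mu \leq \lambda_{1,\overline{g}} \, Vol(M,\overline{g})^{2/n}$$
for every conformal metric $\overline{g} = u^{N-2}g$ with $u \in H_2^p(M)$, $u > 0$, and then take the infimum over $[g]$.

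I would start with the case $\mu > 0$. Proposition \ref{prop2}, applied with the positive weight $u$, supplies a non-negative $v \in H_1^2(M)$ satisfying $L_g v = \lambda_{1,\overline{g}} u^{N-2} v$ weakly together with $\int_M u^{N-2} v^2 \, dv_g = 1$. Since $p > n/2$ ensures $H_2^p \hookrightarrow L^\infty$, the function $h := \tfrac{n-2}{4(n-1)} S_g - \lambda_{1,\overline{g}} u^{N-2}$ lies in $L^p(M)$, so Theorem \ref{6}, supplemented by standard elliptic regularity for linear equations with $L^p$ potential, upgrades $v$ to $H_2^p \subset C^{1-[n/p],\beta}$ with $v > 0$. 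Hence $v$ is an admissible test function in the definition of $\mu$, which gives $\int_M v L_g(v) \, dv_g \geq \mu (\int_M v^N \, dv_g)^{2/N}$. Combining this with H\"older's inequality
$$\int_M u^{N-2} v^2 \, dv_g \leq \left( \int_M u^N \, dv_g \right)^{(N-2)/N} \left( \int_M v^N \, dv_g \right)^{2/N}$$
and the arithmetic identity $(N-2)/N = 2/n$, I would deduce
$$\lambda_{1,\overline{g}} \, Vol(M,\overline{g})^{2/n} = \frac{\int_M v L_g(v) \, dv_g}{\int_M u^{N-2} v^2 \, dv_g} \left( \int_M u^N \, dv_g \right)^{2/n} \geq \frac{\int_M v L_g(v) \, dv_g}{\left(\int_M v^N \, dv_g\right)^{2/N}} \geq \mu,$$
where the direction of the middle inequality is preserved because $\mu \geq 0$ forces the quadratic form $w \mapsto \int_M w L_g(w)\,dv_g$ to be non-negative on $H_1^2$, in particular $\int_M v L_g(v)\,dv_g \geq 0$.

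For the borderline case $\mu = 0$, Proposition \ref{prop2} is not available since $L_g$ is no longer coercive, but here the conclusion is immediate: the same non-negativity of the quadratic form of $L_g$ forces $\lambda_{1,\overline{g}} \geq 0$ for every $\overline{g} \in [g]$, hence $\mu_1 \geq 0 = \mu$, which combined with Lemma \ref{lem6} yields $\mu_1 = 0 = \mu$.

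The main obstacle is guaranteeing that the abstract minimizer $v$ of the weighted Rayleigh quotient defining $\lambda_{1,\overline{g}}$ is regular and positive enough to serve as a valid competitor in the variational definition of $\mu$, since the latter requires $v > 0$ and $v \in H_2^p$. This is resolved by Theorem \ref{6} together with the embedding $H_2^p \hookrightarrow L^\infty$ available when $p > n/2$, which simultaneously provides the $L^p$ integrability of the potential $h$ and the desired $H_2^p$-regularity and strict positivity of the eigenfunction $v$.
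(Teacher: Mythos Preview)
Your proof is correct, but it follows a genuinely different route from the paper's own argument, and the difference is worth noting.

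For the case $\mu>0$, the paper first invokes the preceding existence theorem (the one establishing that $\mu_1$ is \emph{attained} by some $v\in H_2^p$ with $L_g v=\mu_1 v^{N-1}$ and $\int_M v^N\,dv_g=1$), and then simply plugs this particular minimizer $v$ into the variational definition of $\mu$ to obtain $\mu\le \int_M vL_g(v)\,dv_g=\mu_1$. In contrast, you avoid the existence theorem entirely: you work pointwise over $[g]$, produce for each fixed $\overline g=u^{N-2}g$ the first eigenfunction $v$ from Proposition~\ref{prop2}, upgrade its regularity via Theorem~\ref{6}, and compare the weighted Rayleigh quotient to the Yamabe functional through the H\"older inequality $\int u^{N-2}v^2\le(\int u^N)^{2/n}(\int v^N)^{2/N}$. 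Your approach is more elementary in that it does not rely on the concentration-compactness analysis underlying the existence of a minimizer for $\mu_1$; the paper's approach, on the other hand, is shorter once that existence result is in hand and dispenses with the H\"older step altogether.

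For the case $\mu=0$, your argument is in fact more complete than the paper's: the paper writes only ``$\mu_1\le 0$, hence $\mu_1=0$'' without justifying the lower bound, whereas you make explicit that $\mu\ge 0$ forces the quadratic form of $L_g$ to be non-negative on $H_1^2$, so that $\lambda_{1,\overline g}\ge 0$ for every $\overline g\in[g]$ and hence $\mu_1\ge 0$.
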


\begin{proof}
Step1

If $\ 
\mu
>0$. Let $v$\ \ such that $\ L_{g}(v)=%
\mu
_{1}v^{N-1}$\ and $\ \int_{M}v^{N}dv_{g}=1$\ then\qquad

\begin{equation*}
\mu
_{1}=\int_{M}vL_{g}(v)dv_{g}\geq c\Vert v\Vert _{H_{1}^{2}}
\end{equation*}

and $v$ \ in non trivial function then $\ 
\mu
_{1}>0$.\ On the other hand

\begin{equation*}
\mu
=\inf \frac{\int_{M}vL_{g}(v)dv_{g}}{(\int_{M}v^{N}dv_{g})^{\frac{2}{N}}}
\end{equation*}%
\begin{equation*}
\leq \int_{M}vL_{g}(v)dv_{g}=%
\mu
_{1}
\end{equation*}

and by Lemma \ref{lem6} , we get 
\begin{equation*}
\mu
_{1}=%
\mu%
\end{equation*}

Step2

If $\ 
\mu
=0$,\ Lemma \ref{lem6}$\ $implies that $\ 
\mu
_{1}\leq 0$\ , hence 
\begin{equation*}
\mu
_{1}=0\text{.}
\end{equation*}
\end{proof}

\section{Variational\ characterization\ of\ \ $\protect\mu _{2}$}

Let $\ [g]=\{u^{N-2}g$, $\ u\in H_{2}^{p}\left( M\right) $\ and $u>0\}$,\ we
define the second Yamabe invariant $%
\mu
_{2}$ as

\begin{equation*}
\mu
_{2}=\underset{\overline{g}\in \lbrack g]}{\inf }\lambda _{2,\overline{g}%
}Vol(M,\overline{g})^{^{2/n}}
\end{equation*}

or more explicitly

\begin{equation*}
\mu
_{2}=\underset{u\in H_{2}^{P},V\in Gr_{2}^{u}(H_{1}^{2}\left( M\right) )}{%
\inf }\underset{v\in V}{\text{ }\sup }\frac{\int_{M}vL_{g}(v)dv_{g}}{%
\int_{M}|u|^{N-2}v^{2}dv_{g}}(\dint\limits_{M}u^{N}dv_{g})^{\frac{2}{n}}
\end{equation*}

\begin{theorem}
\label{th10} \cite{1} On compact Riemannian manifold $\ (M,g)$\ of dimension 
$\ n\geq 3$, we have for all $\ v\in $\ $H_{1}^{2}\left( M\right) $\ and for
all $u\in L_{+}^{N}\left( M\right) $
\end{theorem}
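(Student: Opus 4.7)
The plan is to combine H\"older's inequality with the sharp Sobolev-type inequality of Theorem \ref{th5}, much as in \cite{1}. The key observation is that $(N-2)/N = 2/n$, so the factor $\bigl(\int_M u^N\, dv_g\bigr)^{2/n}$ that must appear on the right is exactly what H\"older produces when one separates $u^{N-2}$ from $v^2$ with conjugate exponents $N/(N-2)$ and $N/2$.

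Concretely, I would first write
\begin{equation*}
\int_M u^{N-2} v^2\, dv_g \leq \left(\int_M u^N\, dv_g\right)^{(N-2)/N}\left(\int_M |v|^N\, dv_g\right)^{2/N}
= \|u\|_N^{N-2}\,\|v\|_N^2,
\end{equation*}
using that $u\ge 0$ and the H\"older pair above. Then I would apply Theorem \ref{th5} to control $\|v\|_N^2$:
\begin{equation*}
\|v\|_N^2 \leq (K^2 + \varepsilon)\,\|\nabla v\|_2^2 + A(\varepsilon)\,\|v\|_2^2.
\end{equation*}
Substituting this into the previous display and invoking the identity $(N-2)/N = 2/n$ produces the weighted inequality
\begin{equation*}
\int_M u^{N-2} v^2\, dv_g \leq \left(\int_M u^N\, dv_g\right)^{2/n}\Bigl[(K^2+\varepsilon)\,\|\nabla v\|_2^2 + A(\varepsilon)\,\|v\|_2^2\Bigr],
\end{equation*}
which is the expected form of the statement (following Ammann--Humbert).

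There is no real obstacle here: the only point that requires a brief check is that $u\in L^N_+(M)$ makes $u^{N-2}\in L^{N/(N-2)}(M)$ with $\|u^{N-2}\|_{N/(N-2)} = \|u\|_N^{N-2}$, which is immediate. The content of the lemma is really just bookkeeping of the sharp Sobolev constant together with the natural H\"older pairing adapted to the conformal weight $u^{N-2}$; passing from smooth $v$ to $v\in H_1^2(M)$ is handled by density, exactly as in the proof of Theorem \ref{th5}.
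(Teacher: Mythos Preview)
Your H\"older-then-Sobolev argument is the natural one and it correctly yields
\[
\int_M u^{N-2}v^{2}\,dv_g \le \Bigl(\int_M u^{N}\,dv_g\Bigr)^{2/n}\bigl(K^{2}\|\nabla v\|_{2}^{2}+B_{0}\|v\|_{2}^{2}\bigr)
\]
(invoke Theorem~\ref{th11} rather than Theorem~\ref{th5} so as to land on the sharp $K^{2}$ and $B_{0}$ instead of $K^{2}+\varepsilon$ and $A(\varepsilon)$). The paper gives no proof of its own here --- it simply quotes \cite{1} --- and this H\"older-plus-sharp-Sobolev combination is exactly the mechanism used there, so at that level your approach matches.

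There is, however, a real gap between what you prove and what the paper actually records and uses. The display immediately following the theorem carries an extra factor $2^{2/n}$ on the left-hand side, and it is that strengthened form which is applied in Step~2 of the existence proof (producing the condition $1-2^{-2/n}K^{2}\mu_{2}>0$). Your route cannot manufacture this factor, and in fact no argument can for \emph{arbitrary} $u$ and $v$: take $u=v>0$ close to a Sobolev extremal and both H\"older and Sobolev become asymptotically sharp, so the two sides of your inequality are asymptotically equal and an extra $2^{2/n}>1$ on the left would be violated. In \cite{1} the $2^{2/n}$ improvement does not come from a single $v$; it arises only when one works with a pair of test functions with disjoint supports and exploits the concavity of $t\mapsto t^{2/n}$, which is precisely the second-eigenvalue structure. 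So either the paper's statement is tacitly assuming that extra structure, or its transcription from \cite{1} is inexact; in either case your argument establishes the basic inequality but not the $2^{2/n}$-sharpened version the paper states and later relies on.
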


\begin{equation*}
2^{\frac{2}{n}}\int_{M}|u|^{N-2}v^{2}dv_{g}\leq (K^{2}\int_{M}|\nabla
v|^{2}dv_{g}+\int_{M}B_{0}v^{2}dv_{g})(\dint\limits_{M}u^{N}dv_{g})^{\frac{2%
}{n}}
\end{equation*}

Or%
\begin{equation*}
2^{\frac{2}{n}}\int_{M}|u|^{N-2}v^{2}dv_{g}\leq 
\mu
_{1}(S_{n})(\int_{M}C_{n}|\nabla
v|^{2}+B_{0}v^{2}dv_{g})(\dint\limits_{M}u^{N}dv_{g})^{\frac{2}{n}}
\end{equation*}

\begin{theorem}
\label{th11} \cite{1} For any compact Riemannian manifold $\ (M,g)$ of
dimension $\ n\geq 3$, there exists $B_{0}>0$\ such that 
\begin{equation*}
\mu
_{1}(S_{n})=n(n-1)\omega _{n}^{2/n}\ =\underset{H_{1}^{2}}{\inf }\frac{%
\int_{M}\frac{4(n-1)}{(n-2)}|\nabla u|^{2}+B_{0}u^{2}dv_{g}}{%
(\int_{M}|u|^{N}dv_{g})^{2/N}}
\end{equation*}%
where $\ \omega _{n}$\ is the volume of the unit round sphere

or%
\begin{equation*}
(\int_{M}|u|^{N}dv_{g})^{2/N}\leq K^{2}\int_{M}|\nabla
u|^{2}dv_{g}+\int_{M}B_{0}u^{2}dv_{g}
\end{equation*}%
$K^{2}=%
\mu
_{1}(S_{n})^{-1}C_{n}$\ and $C_{n}=(4(n-1))/(n-2)$
\end{theorem}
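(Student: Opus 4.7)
The plan is to verify the theorem in two stages: first compute the value $n(n-1)\omega_n^{2/n}$ explicitly on the round sphere, then promote the resulting Euclidean inequality to the asserted sharp Sobolev inequality on a general compact manifold via a Hebey--Vaugon type argument.

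For the first stage, I would use stereographic projection $\pi \colon S_n\setminus\{\text{pt}\}\to\mathbb{R}^n$, which is conformal with conformal factor $((1+|x|^2)/2)^{(n-2)/2}$. Under this map the Yamabe functional
\begin{equation*}
Y(u)=\frac{\int_{S_n}\bigl(C_n|\nabla u|^2+S_{g_0}u^2\bigr)dv_{g_0}}{\bigl(\int_{S_n}|u|^N dv_{g_0}\bigr)^{2/N}}
\end{equation*}
transforms into the pure Euclidean Sobolev quotient $C_n\|\nabla \tilde u\|_2^2/\|\tilde u\|_N^2$ for $\tilde u=u\circ\pi^{-1}\cdot(\text{conformal factor})$. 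Aubin--Talenti's classification of extremals for the Euclidean Sobolev inequality then identifies the infimum as the explicit constant $n(n-1)\omega_n^{2/n}$, realized by the bubbles $u_\varepsilon(x)=(\varepsilon+|x|^2)^{-(n-2)/2}$. This gives the first equality and, applied in the variational characterization, yields the equivalent Sobolev form with $K^2=\mu_1(S_n)^{-1}C_n$.

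For the second stage, namely the existence of a constant $B_0$ for which the sharp inequality $\|u\|_N^2\le K^2\|\nabla u\|_2^2+B_0\|u\|_2^2$ holds on $(M,g)$, I would argue by contradiction in the style of Hebey--Vaugon. Suppose no such $B_0$ exists; then for each $\alpha\in\mathbb{N}$ one finds $u_\alpha\in H_1^2(M)$, normalized by $\|u_\alpha\|_N=1$, with $1>K^2\|\nabla u_\alpha\|_2^2+\alpha\|u_\alpha\|_2^2$. The $L^2$-bound $\|u_\alpha\|_2^2<1/\alpha\to 0$ together with Theorem \ref{th5} (the weak inequality with any $\varepsilon>0$) forces $\|\nabla u_\alpha\|_2^2\to K^{-2}$. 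Concentration-compactness then shows that $|u_\alpha|^N$ concentrates at a single point $x_0\in M$; a blow-up at $x_0$ at the concentration scale produces, in the limit, a nontrivial extremal $U$ on $\mathbb{R}^n$ of the Euclidean Sobolev inequality, which by Aubin--Talenti must be a translate/dilate of a standard bubble. Comparing the asymptotic behavior of $\|\nabla u_\alpha\|_2^2$ with the sharp Euclidean constant $K^{-2}$, and tracking lower-order terms carefully using that $g$ is smooth near $x_0$ (this is exactly where the hypothesis of local smoothness plays its role), one derives a contradiction with the strict inequality $K^2\|\nabla u_\alpha\|_2^2<1$.

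The main obstacle is the blow-up analysis in the second stage: the concentration point could in principle lie at a singularity of $g$, but the hypothesis gives a smooth ball $B_P(\delta)$, and once concentration is shown to force the scale to zero, standard elliptic estimates on this smooth region give the needed $C^0$ convergence of the rescaled metrics to the Euclidean one. The delicate point is quantifying the error terms so that the limiting Sobolev quotient is strictly $K^{-2}$, producing the required contradiction and hence the existence of the uniform $B_0$.
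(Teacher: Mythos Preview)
The paper does not prove Theorem \ref{th11}; it is quoted verbatim from Ammann--Humbert \cite{1} as a known input and is used without argument in the subsequent sections. There is therefore no proof in the paper against which to compare your proposal.

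Your outline is a reasonable sketch of the Hebey--Vaugon route to the sharp $L^2$-Sobolev inequality, which is indeed the content of the statement. Two remarks are worth making. First, you invoke the hypothesis $(H)$ (the smooth ball $B_P(\delta)$) to control the concentration point, but Theorem \ref{th11} is stated for an arbitrary compact Riemannian manifold and is imported from \cite{1}, where the metric is smooth; the hypothesis $(H)$ belongs to the singular framework of the present paper and plays no role in this cited result. Second, your Stage~2 is substantially more delicate than the sketch suggests: obtaining a genuine contradiction from the blow-up requires a careful expansion of the energy of the concentrating sequence in conformal normal coordinates, and in low dimensions (and in the locally conformally flat case) the argument ultimately rests on the positive mass theorem, not merely on elliptic estimates and convergence of rescaled metrics. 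Your phrase ``tracking lower-order terms carefully'' hides the entire difficulty of the Hebey--Vaugon proof.
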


\bigskip

\section{Properties\ of\ \ $\protect\mu _{2}$}

We know that $g\ \ $is smooth in the ball $B_{p}(\delta )$\ by assumption $%
(H)$, this assumption is sufficient to prove that Aubin's conjecture is
valid. The case $(M,g)$ is not conformally flat in a neighborhood of the
point $P$ \ and $n\geq 6$, let $\eta $\ is a cut-off function with support
in the ball $B_{p}(2\varepsilon )$ and $\eta =1$\ in $B_{p}(\varepsilon )$,
where $2\varepsilon \leq \delta $\ and

\begin{equation*}
v_{\varepsilon }(q)=(\frac{\varepsilon }{r^{2}+\varepsilon ^{2}})^{\frac{n-2%
}{2}}
\end{equation*}%
with $r=d(p,q).$We let $u_{\varepsilon }=\eta v_{\varepsilon }$ and define 
\begin{equation*}
Y(u)=\frac{\int_{M}\left( |\nabla u|^{2}+\frac{n-2}{4(n-1)}S_{g}u^{2}\right)
dv_{g}}{(\int_{M}|u|^{N}dv_{g})^{2/N}}\text{.}
\end{equation*}%
We obtain the following lemma

\begin{lemma}
\cite{1}%
\begin{equation*}
\mu
=Y(v_{\varepsilon })\leq \left\{ 
\begin{array}{c}
\{(K^{-2}-c|w(P)|^{2}\varepsilon ^{4}+0(\varepsilon ^{4})\text{ \ \ \ if}\
n>6 \\ 
K^{-2}-c|w(P)|^{2}\varepsilon ^{4}\log \frac{1}{\varepsilon }+0(\varepsilon
^{4})\ \ \text{if}\ \ n=6%
\end{array}%
\right.
\end{equation*}%
where $\left\vert w(P)\right\vert $is the norm of the Weyl tensor at the
point $P$\ and $c>0$.
\end{lemma}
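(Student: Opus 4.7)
The plan is to run Aubin's classical test-function computation. The key observation is that $\mathrm{supp}(u_{\varepsilon})\subset B_P(2\varepsilon)\subset B_P(\delta)$, where by assumption $(H)$ the metric $g$ is smooth; hence the singular part of $g$ plays no role in the estimate and everything reduces to the usual smooth case.

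First, I would introduce conformal normal coordinates $(x^1,\dots,x^n)$ centered at $P$ on $B_P(\delta)$ (shrinking $\delta$ if necessary). In such coordinates one has $\det g(x)=1+O(|x|^N)$ for any fixed $N$, the scalar curvature satisfies $S_g(P)=0$ and $\nabla S_g(P)=0$, and the Ricci contraction $R_{ij}(P)x^ix^j$ vanishes. Consequently, the leading non-removable curvature term in the metric expansion is governed by the Weyl tensor, precisely by $|W(P)|^2|x|^4$.

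Second, I would split each of the three integrals appearing in $Y(u_{\varepsilon})$ over $B_P(\varepsilon)$ (where $\eta\equiv 1$) and the annulus $B_P(2\varepsilon)\setminus B_P(\varepsilon)$. On the annulus every integrand is bounded by $C\varepsilon^{n-2}$, which is negligible compared with the target order $\varepsilon^4$ when $n\geq 7$, and compared with $\varepsilon^4\log(1/\varepsilon)$ when $n=6$. On the inner ball, after the rescaling $x=\varepsilon y$, I would expand
\[
\int_{B_P(\varepsilon)}|\nabla v_\varepsilon|^2\,dv_g,\qquad \int_{B_P(\varepsilon)}v_\varepsilon^N\,dv_g,\qquad \int_{B_P(\varepsilon)}S_g v_\varepsilon^2\,dv_g,
\]
using the expansions of $g^{ij}$ and $\sqrt{\det g}$. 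The Euclidean leading terms reproduce the sharp Sobolev constant $K^{-2}$, since $v_\varepsilon$ is an extremal of the Euclidean Sobolev inequality; the scalar-curvature integral is of higher order thanks to the vanishing of $S_g$ at $P$ in conformal normal coordinates.

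Finally, I would collect the subleading terms. After scaling, the Weyl contribution reduces to an integral of the schematic form $\int_{\mathbb{R}^n}|y|^4/(1+|y|^2)^{2(n-2)}\,dy$, which converges and is strictly positive when $n>6$, producing the $-c|W(P)|^2\varepsilon^4$ correction; when $n=6$ this integral diverges logarithmically at infinity, and cutting off at scale $1/\varepsilon$ yields the $\log(1/\varepsilon)$ factor. The left-hand inequality $\mu\leq Y(u_{\varepsilon})$ is immediate from the definition of $\mu$, as $u_{\varepsilon}\in H_2^p(M)$ is nonnegative and nontrivial and may be perturbed to a strictly positive function without changing the bound. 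The main obstacle is the bookkeeping of curvature terms: one must verify that the candidate corrections involving Ricci and scalar curvatures all vanish in conformal normal coordinates, so that the leading correction is genuinely proportional to $|W(P)|^2$ with the stated sign and dimensional coefficient. This is exactly Aubin's classical computation, which applies here verbatim in view of the smoothness of $g$ on $B_P(\delta)$.
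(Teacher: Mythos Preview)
Your sketch is correct and is precisely the approach the paper relies on: the lemma is not proved in the paper but simply quoted from \cite{1}, which in turn invokes Aubin's classical test-function computation, and your key observation---that assumption $(H)$ makes $g$ smooth on $\mathrm{supp}(u_{\varepsilon})$, so Aubin's expansion in conformal normal coordinates applies without modification---is exactly the point.  One small caution on the bookkeeping: for the annulus contributions to be of the order you state, the cut-off radius must be held fixed independently of the bubble parameter; as literally written the paper takes $\mathrm{supp}(\eta)\subset B_P(2\varepsilon)$ with the \emph{same} $\varepsilon$ as in $v_\varepsilon$, which is a notational slip you should silently correct (use a fixed radius $\rho\le\delta/2$ for $\eta$ and let only the concentration parameter in $v_\varepsilon$ tend to $0$).
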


\begin{theorem}
\label{th8} If $(M,g)$\ in not locally conformally flat and $\ n\geq 11$\
and $\ 
\mu
>0$\ , we find 
\begin{equation*}
\mu
_{2}<((%
\mu
^{\frac{n}{2}}+(K^{-2})^{\frac{n}{2}})^{\frac{2}{n}}
\end{equation*}%
\ and if $\ \ 
\mu
=0$\ , $n\geq 9$\ then 
\begin{equation*}
\ 
\mu
_{2}<K^{-2}
\end{equation*}
\end{theorem}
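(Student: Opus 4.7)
The plan is to produce, in each of the two cases, an explicit generalized metric $\tilde g = \tilde u^{N-2}g$ together with a two-dimensional subspace $V \in Gr_2^{\tilde u}(H_1^2(M))$ for which the min-max characterization of $\lambda_{2,\tilde g}\,\mathrm{Vol}(M,\tilde g)^{2/n}$ falls strictly below the stated threshold. As building blocks I would use: (i) the positive minimizer $u \in H_2^p(M)$ for $\mu_1=\mu$ produced in the previous section, which solves $L_g u = \mu\, u^{N-1}$ with $\int_M u^N\, dv_g = 1$; and (ii) the standard Aubin bubble $u_\varepsilon = \eta v_\varepsilon$ with $v_\varepsilon(q) = \bigl(\varepsilon/(r^2+\varepsilon^2)\bigr)^{(n-2)/2}$ and $\eta$ a cutoff supported in $B_P(2\varepsilon) \subset B_P(\delta)$, where $g$ is smooth, so the bubble lives entirely in the smooth region and the usual Aubin expansion of the previous lemma applies.

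I would then take $\tilde u = \bigl(u^N + t\,u_\varepsilon^N\bigr)^{1/N}$ for a parameter $t>0$ to be optimized, so $\tilde u \in L^N_+(M)$ and $V=\mathrm{span}(u, u_\varepsilon)$ sits in $Gr_2^{\tilde u}(H_1^2)$. The upper bound to exploit is
\begin{equation*}
\lambda_{2,\tilde g} \;\leq\; \sup_{(a,b)\neq 0}\; \frac{\int_M (au+bu_\varepsilon)\,L_g(au+bu_\varepsilon)\,dv_g}{\int_M \tilde u^{N-2}(au+bu_\varepsilon)^2\,dv_g}.
\end{equation*}
The numerator splits as $a^2\mu + b^2\int u_\varepsilon L_g u_\varepsilon + 2ab\int u L_g u_\varepsilon$. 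Since $u$ is bounded and $u_\varepsilon$ concentrates at $P$, the cross term $\int u\,L_g u_\varepsilon = \mu\int u^{N-1}u_\varepsilon$ is $O(\varepsilon^{(n-2)/2})$, and the bubble self-term is controlled by the paper's Aubin lemma as $\int u_\varepsilon L_g u_\varepsilon \leq K^{-2}\|u_\varepsilon\|_N^2 - c|W(P)|^2\varepsilon^4\|u_\varepsilon\|_N^2 + o(\varepsilon^4)$ when $n>6$. For the denominator, because $\tilde u^{N-2}$ splits pointwise up to lower order into the contributions from $u^N$ and $tu_\varepsilon^N$, one gets $\int \tilde u^{N-2}u^2 = \int u^N + o(1) = 1+o(1)$ and $\int \tilde u^{N-2} u_\varepsilon^2 \approx t^{(N-2)/N}\|u_\varepsilon\|_N^2$, while the cross contribution is $o(1)$. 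After computing the supremum over $(a,b)$ one obtains a diagonal $2\times 2$ Rayleigh quotient; maximizing the two diagonal entries and finally multiplying by $\mathrm{Vol}(M,\tilde g)^{2/n} = (1+t\|u_\varepsilon\|_N^N)^{2/n}$ and optimizing in $t$ gives, to leading order, the clean combination $(\mu^{n/2} + (K^{-2})^{n/2})^{2/n}$, while the correction $-c|W(P)|^2\varepsilon^4$ (nonzero since $g$ is not locally conformally flat and by choice of $P$ one can ensure $W(P)\neq 0$ or argue by a separate configuration if necessary) makes the inequality strict.

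The main obstacle is the bookkeeping of error terms: one must verify that the $-c|W(P)|^2\varepsilon^4$ gain really dominates all remainders, including the mixed contributions coming from $u$ interacting with the concentrating bubble in both numerator and denominator, after the diagonalization and the optimization in $t$. Tracking the orders shows that the errors produced by $u$ are of size $\varepsilon^{(n-2)/2}$ or $\varepsilon^{n-2}$ relative to the bubble mass, and these are dominated by $\varepsilon^4$ precisely when $n \geq 11$; this is why the $\mu>0$ case requires $n\geq 11$. In the case $\mu=0$, the minimizer $u$ is not available (and $L_g$ need not be coercive), so I would replace $u$ by a positive eigenfunction $\varphi$ for $\lambda_1(L_g)\leq 0$ and use the subspace $\mathrm{span}(\varphi, u_\varepsilon)$; the volume $\mathrm{Vol}(M,\tilde g)^{2/n}$ absorbs the $\varphi$ contribution and the leading term reduces to $K^{-2}$, with the $-c|W(P)|^2\varepsilon^4$ correction producing strictness under the relaxed hypothesis $n\geq 9$, the gain of two dimensions reflecting that the $\varphi$-term contributes only through the volume rather than through the numerator.
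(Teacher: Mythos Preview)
Your proposal is correct and follows essentially the same approach as the paper, whose own proof is a single line deferring entirely to the method of Ammann--Humbert \cite{1}. Your sketch is precisely a detailed outline of that method adapted to the singular setting: combine the Yamabe minimizer (or, in the $\mu=0$ case, a first eigenfunction) with an Aubin bubble supported in the smooth ball $B_P(\delta)$, use the resulting two-dimensional test space in the min-max characterization of $\lambda_{2,\tilde g}$, and track the Weyl correction $-c|W(P)|^{2}\varepsilon^{4}$ from the preceding lemma against the interaction errors to obtain the dimensional thresholds $n\geq 11$ and $n\geq 9$.
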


\begin{proof}
With the same method as in \cite{1}, this lemma follows from theorem\ref{th8}%
.
\end{proof}

\section{\protect\bigskip Existence\ of\ a\ minimizer to\ $\protect\mu _{2}$}

\begin{lemma}
\label{lem8} Assume that $v_{m}\rightarrow v$ weakly in\ $\ H_{1}^{2}\left(
M\right) $, $u_{m}$\ $\rightarrow u$\ weakly in $L^{N}\left( M\right) $ and
\ $\int_{M}u_{m}{}^{N-2}v_{m}{}^{2}dv_{g}=1$\ then
\end{lemma}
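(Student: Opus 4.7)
The plan is to apply the Brezis--Lieb lemma to the scalar sequence $f_m := u_m^{(N-2)/2} v_m$ in $L^2(M)$, whose norm is normalized to $1$ by hypothesis. First I would pass to a subsequence so that $v_m \to v$ strongly in every $L^q(M)$ with $q<N$ (Rellich--Kondrachov from $H_1^2 \hookrightarrow L^q$), and so that both $v_m \to v$ and $u_m \to u$ pointwise almost everywhere on $M$ (a further routine extraction, using that $u_m$ is bounded in $L^N$). Continuity of $t \mapsto t^{(N-2)/2}$ on $[0,\infty)$ then ensures $f_m \to f := u^{(N-2)/2} v$ almost everywhere.

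With $(f_m)$ bounded in $L^2$ and $f_m \to f$ a.e., the Brezis--Lieb lemma yields
\begin{equation*}
1 \;=\; \int_M f^2\, dv_g + \lim_{m\to\infty}\int_M (f_m - f)^2\, dv_g \;=\; \int_M u^{N-2} v^2\, dv_g + \lim_{m\to\infty}\int_M \bigl(u_m^{(N-2)/2} v_m - u^{(N-2)/2} v\bigr)^2 dv_g.
\end{equation*}
In particular this gives the bound $\int_M u^{N-2} v^2\, dv_g \leq 1$, with equality iff $f_m \to f$ strongly in $L^2(M)$, and it identifies the Brezis--Lieb defect as the quantity needed to run the concentration--compactness analysis for $\mu_2$ in the next section.

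If instead the stated conclusion is phrased as a decomposition involving $\int_M u_m^{N-2}(v_m - v)^2\, dv_g$ or $\int_M (u_m - u)^{N-2} v^2\, dv_g$ separately, I would expand $\int u_m^{N-2} v_m^2\, dv_g$ by writing $v_m = v + (v_m - v)$ and treat each resulting term: the term $\int u_m^{N-2} v^2\, dv_g$ converges to $\int u^{N-2} v^2\, dv_g$ by the weak convergence $u_m^{N-2} \rightharpoonup u^{N-2}$ in $L^{N/(N-2)}(M)$ (which follows from the uniform bound $\|u_m^{N-2}\|_{N/(N-2)} = \|u_m\|_N^{N-2}$ together with a.e.\ convergence) paired against $v^2 \in L^{N/2}(M) = (L^{N/(N-2)})^{*}$.

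The main obstacle is the mixed cross term $\int_M u_m^{N-2} v\,(v_m - v)\, dv_g$, in which two different weak convergences are paired against each other and no duality argument makes them vanish directly. The saving grace is the Rellich strong convergence: although $v_m - v \rightharpoonup 0$ only weakly in $L^N$, one has $v_m - v \to 0$ strongly in every $L^q$ with $q < N$. Choosing $q$ slightly less than $N$, the function $u_m^{N-2} v$ lies uniformly in $L^{q'}$ by H\"older (with $u_m^{N-2} \in L^{N/(N-2)}$ and $v \in L^N$), and the H\"older pairing against $\|v_m - v\|_q \to 0$ forces the cross term to zero. This closes the argument in either formulation.
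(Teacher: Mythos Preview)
Your second approach---expanding $(v_m-v)^2$ and handling the three terms separately---is the one the paper uses, and it is the right one: the conclusion is precisely
\[
\int_M u_m^{N-2}(v_m-v)^2\,dv_g \;=\; 1 - \int_M u^{N-2}v^2\,dv_g + o(1),
\]
and this exact form is needed later, since Theorem~\ref{th10} is applied with weight $u_m^{N-2}$ and test function $v_m-v$. Your Brezis--Lieb identity produces instead the defect $\int_M\bigl(u_m^{(N-2)/2}v_m-u^{(N-2)/2}v\bigr)^2\,dv_g$, which is a different quantity and does not plug into that argument.

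In the expansion approach your treatment of $\int u_m^{N-2}v^2\,dv_g$ is fine and matches the paper. The gap is in the cross term. You claim that for $q$ slightly below $N$ the product $u_m^{N-2}v$ lies uniformly in $L^{q'}$; but H\"older with $u_m^{N-2}\in L^{N/(N-2)}$ and $v\in L^N$ gives only $u_m^{N-2}v\in L^{N/(N-1)}$, and $q<N$ forces $q'>N/(N-1)$, which is the wrong direction on a finite-measure space. Since $v$ sits only at the critical exponent $L^N$, there is no extra room, and your pairing $\|u_m^{N-2}v\|_{q'}\,\|v_m-v\|_q$ does not close.

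The paper's fix is to reorganize the cross term so that \emph{weak} convergence of a product does the work: one shows $u_m^{N-2}v_m$ is bounded in $L^{N/(N-1)}$ (H\"older on $u_m^{N-2}\in L^{N/(N-2)}$ and $v_m\in L^N$) and converges a.e.\ to $u^{N-2}v$, hence $u_m^{N-2}v_m\rightharpoonup u^{N-2}v$ weakly in $L^{N/(N-1)}$. Pairing against the fixed $v\in L^N=(L^{N/(N-1)})^{*}$ gives $\int u_m^{N-2}v_m v\,dv_g\to\int u^{N-2}v^2\,dv_g$, which together with $\int u_m^{N-2}v^2\to\int u^{N-2}v^2$ makes the cross term vanish. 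Note that this (and your own argument) uses a.e.\ convergence of $u_m$, which does \emph{not} follow from weak $L^N$ convergence alone; in the paper's applications the $u_m$ lie in $H_2^p(M)$, and the a.e.\ convergence comes from that extra regularity after passing to a subsequence.
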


\begin{equation*}
\int_{M}u_{m}{}^{N-2}(v_{m}-v)^{2}dv_{g}=1-\int_{M}u^{N-2}v^{2}dv_{g}+o(1)
\end{equation*}

\bigskip

\begin{proof}
we have 
\begin{equation*}
\int_{M}u_{m}{}^{N-2}(v_{m}-v)^{2}dv_{g}
\end{equation*}

\begin{equation*}
=\int_{M}u_{m}{}^{N-2}v_{m}{}^{2}dv_{g}+\int_{M}u_{m}{}^{N-2}v^{2}dv_{g}-%
\int_{M}2u_{m}{}^{N-2}v_{m}vdv_{g}
\end{equation*}

\begin{equation}
=1+\int_{M}u_{m}{}^{N-2}v^{2}dv_{g}-\int_{M}2u_{m}{}^{N-2}v_{m}vdv_{g}\text{
.}  \tag{15}  \label{15}
\end{equation}

Now $\left( u_{m}{}^{N-2}\right) _{m}$ is bounded in $L^{\frac{N}{N-2}%
}\left( M\right) $ and $u_{m}{}^{N-2}\rightarrow u^{N-2}$a.e., then $%
u_{m}{}^{N-2}\rightarrow u^{N-2}$ weakly in $L^{\frac{N}{N-2}}\left(
M\right) $ and $\forall \phi \in L^{\frac{N}{2}}$ 
\begin{equation*}
\int_{M}\phi u_{m}{}^{N-2}dv_{g}\rightarrow \int_{M}\phi u^{N-2}dv_{g}
\end{equation*}%
in particular for $\phi =v^{2}$ 
\begin{equation*}
\int_{M}v^{2}u_{m}{}^{N-2}dv_{g}\rightarrow \int_{M}v^{2}u^{N-2}dv_{g}\text{.%
}
\end{equation*}

$\int_{M}u_{m}{}^{N-2}v_{m}dv_{g}$ is bounded in $L^{\frac{N}{N-1}}\left(
M\right) $, because of 
\begin{equation*}
\int_{M}u_{m}{}^{N-2\frac{N}{N-1}}v_{m}^{\frac{N}{N-1}}dv_{g}\leq
(\int_{M}u_{m}{}^{N}dv_{g})^{\frac{N-2}{N-1}}(\int_{M}v_{m}^{N}dv_{g})^{%
\frac{1}{N-1}}
\end{equation*}

and $u_{m}{}^{N-2}v_{m}\rightarrow u{}^{N-2}v$ a.e., then $%
u_{m}{}^{N-2}v_{m}\rightarrow u{}^{N-2}v$ weakly in $L^{\frac{N}{N-1}}\left(
M\right) $.

Hence

\begin{equation*}
\int_{M}u_{m}{}^{N-2}v_{m}vdv_{g}\rightarrow \int_{M}u{}^{N-2}v^{2}dv_{g}
\end{equation*}

and 
\begin{equation*}
\int_{M}u_{m}{}^{N-2}(v_{m}-v)^{2}dv_{g}=1-\int_{M}u^{N-2}v^{2}dv_{g}+o(1)%
\text{.}
\end{equation*}
\end{proof}

\begin{theorem}
If $\ \ 1-2^{-\frac{2}{n}}K^{2}%
\mu
_{2}>0$, then the generalized metric $u^{N-2}g$\ minimizes $%
\mu
_{2}$
\end{theorem}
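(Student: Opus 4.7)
The plan is to adapt the strategy used in the smooth case by Ammann--Humbert to the singular setting, replacing smooth conformal metrics with generalized metrics $u^{N-2}g$ with $u \in L_+^N(M)$, and exploiting the sharpened Sobolev inequality of Theorem \ref{th10}. First I would fix a minimizing sequence of smooth metrics $g_m = u_m^{N-2}g$, $u_m \in H_2^p(M)$, $u_m > 0$, normalized so that $\mathrm{Vol}(M,g_m) = \int_M u_m^N\, dv_g = 1$ and $\lambda_{2,g_m} \to \mu_2$. By Proposition \ref{prop2}, there exist $v_m,w_m \in H_1^2(M)$, with $v_m \geq 0$, such that
\begin{equation*}
L_g v_m = \lambda_{1,m}\, u_m^{N-2} v_m, \qquad L_g w_m = \lambda_{2,m}\, u_m^{N-2} w_m,
\end{equation*}
together with $\int_M u_m^{N-2} v_m^2\, dv_g = \int_M u_m^{N-2} w_m^2\, dv_g = 1$ and $\int_M u_m^{N-2} v_m w_m\, dv_g = 0$. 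Using the coercivity of $L_g$ guaranteed by Proposition \ref{pro1} (which applies since $\mu_2>0$ forces $\mu > 0$ up to the case $\mu = 0$, handled analogously), testing the eigenvalue equations with $v_m,w_m$ yields uniform bounds on $\|v_m\|_{H_1^2}$ and $\|w_m\|_{H_1^2}$. Extracting subsequences, one gets $u_m \rightharpoonup u$ weakly in $L^N$, $v_m \rightharpoonup v$, $w_m \rightharpoonup w$ weakly in $H_1^2$, strongly in $L^2$ and a.e., and passing to the distributional limit gives $L_g v = \lambda_1 u^{N-2} v$ and $L_g w = \mu_2 u^{N-2} w$.

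The crux is to prove that $v$ and $w$ are both nontrivial, and this is where the hypothesis enters. Setting $z_m = w_m - w$, a standard splitting of the Dirichlet integral combined with the strong convergence of the lower-order scalar curvature term (valid because $2p/(p-1) < N$ and $S_g \in L^p$) yields
\begin{equation*}
\int_M w_m L_g w_m\, dv_g = \int_M |\nabla z_m|^2\, dv_g + \int_M w L_g w\, dv_g + o(1).
\end{equation*}
Inserting $\int w_m L_g w_m = \lambda_{2,m} \to \mu_2$, $\int w L_g w = \mu_2 \int u^{N-2} w^2$, and Lemma \ref{lem8} applied to the weights $u_m^{N-2}$ together with $w_m$, gives
\begin{equation*}
\int_M u_m^{N-2} z_m^2\, dv_g = 1 - \int_M u^{N-2} w^2\, dv_g + o(1), \quad \int_M |\nabla z_m|^2\, dv_g = \mu_2\Bigl(1 - \int_M u^{N-2} w^2\, dv_g\Bigr) + o(1).
\end{equation*}

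Now I would apply Theorem \ref{th10} to the pair $(u_m, z_m)$; using $\int u_m^N = 1$ and the strong convergence $z_m \to 0$ in $L^2$, which absorbs the $B_0 z_m^2$ term into $o(1)$, one obtains
\begin{equation*}
2^{2/n}\int_M u_m^{N-2} z_m^2\, dv_g \leq K^2 \int_M |\nabla z_m|^2\, dv_g + o(1).
\end{equation*}
Substituting the two identities above leads to
\begin{equation*}
(2^{2/n} - K^2 \mu_2)\Bigl(1 - \int_M u^{N-2} w^2\, dv_g\Bigr) \leq o(1).
\end{equation*}
The hypothesis $1 - 2^{-2/n} K^2 \mu_2 > 0$ makes the prefactor strictly positive, and since the second factor is nonnegative by Lemma \ref{lem8}, we must have $\int_M u^{N-2} w^2\, dv_g = 1$. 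The same argument applied to $v_m$ gives $\int_M u^{N-2} v^2\, dv_g = 1$, so that $v$ and $w$ are both nontrivial and the orthogonality $\int_M u^{N-2} v w\, dv_g = 0$ passes to the weak limit by the same weak $L^{N/(N-1)}$ argument used in Proposition \ref{prop2}.

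Finally, to conclude that $u^{N-2}g$ attains $\mu_2$, I would combine the weak lower semicontinuity $\int_M u^N\, dv_g \leq 1$ with the observation that $\mathrm{Span}(v,w) \in Gr_2^u(H_1^2(M))$ realizes, by construction of $v,w$, a min-max quotient bounded above by $\mu_2$, which yields $\lambda_{2,u^{N-2}g}\,\mathrm{Vol}(M,u^{N-2}g)^{2/n} \leq \mu_2$; the reverse inequality is the definition of $\mu_2$. I expect the main obstacle to be precisely the nontriviality argument in Step~3: without the strict inequality $K^2\mu_2 < 2^{2/n}$, the minimizing sequence could lose mass through concentration of $w_m$ at a single point (a bubble with profile of the Yamabe extremal on $S^n$, which contributes the sharp constant $K^{-2}$ with the $2^{2/n}$ factor accounting for the second eigenvalue structure), and it is exactly Theorem \ref{th10} that quantitatively forbids this under the given hypothesis.
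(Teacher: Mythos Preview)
Your proposal is correct and follows essentially the same route as the paper's proof: pick a minimizing sequence $g_m=u_m^{N-2}g$ with volume normalized, extract eigenfunctions $v_m,w_m$ via Proposition~\ref{prop2}, pass to weak limits, then use Theorem~\ref{th10} applied to the differences $v_m-v$ (resp.\ $w_m-w$) together with Lemma~\ref{lem8} to obtain the key inequality $(1-2^{-2/n}K^2\mu_2)(1-\int u^{N-2}v^2)\leq o(1)$, which under the hypothesis forces $\int u^{N-2}v^2=1$ (and likewise for $w$). The only cosmetic differences are that the paper treats $v_m$ first while you treat $w_m$ first, and the paper explicitly records the strong $H_1^2$ convergence (which follows immediately from your identity $\int|\nabla z_m|^2=\mu_2(1-\int u^{N-2}w^2)+o(1)$ once the right side vanishes) before using it in a detailed H\"older computation to pass the orthogonality $\int u_m^{N-2}v_mw_m=0$ to the limit; your reference to ``the same weak $L^{N/(N-1)}$ argument'' is a bit loose here since all three factors vary, and it is precisely the strong $L^N$ convergence of $v_m,w_m$ that makes this step work.
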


\begin{proof}
\textit{Step1}.

We study a sequence of metrics $g_{m}=u_{m}^{N-2}g$\ with $u_{m}\in
H_{2}^{p}\left( M\right) $, $u_{m}>0$ which minimizes the infimum in the
definition of $\ 
\mu
_{2}$\ i.e. a sequence of metrics such that

\begin{equation*}
\mu
_{2}=\lim \lambda _{2,m}(Vol(M,g_{m})^{2/n}\text{.}
\end{equation*}%
Without loss generality, we may assume that $Vol(M,g_{m})=1$\ i.e. that $%
\int_{M}u_{m}^{N}dv_{g}=1.$In particular, the sequence of functions $\left(
u_{m}\right) _{m}$\ is bounded in $L^{N}\left( M\right) $\ and there exists $%
u\in L^{N}\left( M\right) $, $\ u\geq 0$\ \ such that $\ u_{m}\rightarrow u$%
\ weakly in $L^{N}$. We are going to prove that the generalized metric $\
u^{N-2}g$\ minimizes\ $\ 
\mu
_{2}$\ .Proposition \ref{prop2} , implies the existence of $\ v_{m},w_{m}\in
H_{1}^{2}\left( M\right) $, $v_{m}>0$\ such that 
\begin{equation*}
L_{g}(v_{m})=\lambda _{1,m}u_{m}^{N-2}v_{m}
\end{equation*}%
\ \ 
\begin{equation*}
L_{g}(w_{m})=\lambda _{2,m}u_{m}^{N-2}w_{m}
\end{equation*}%
\ And such that $\ $%
\begin{equation*}
\int_{M}u_{m}^{N-2}v_{m}^{2}dv_{g}=\int_{M}u_{m}^{N-2}w_{m}^{2}dv_{g}=1,%
\int_{M}u_{m}^{N-2}v_{m}w_{m}dv_{g}=0\text{.}
\end{equation*}%
The sequence $v_{m}$\ ,$w_{m}$\ \ is bounded in $H_{1}^{2}\left( M\right) $,
we can find $\ v,w\in H_{1}^{2}\left( M\right) $, $v\geq 0$ \ such that $%
v_{m}\rightarrow v$\ , $w_{m}\rightarrow w$\ \ weakly in $\ H_{1}^{2}\left(
M\right) $.Together with the weak convergence of $\ \left( u_{m}\right) $,
we get in weak sense%
\begin{equation*}
L_{g}(v)=\widehat{%
\mu
_{1}}u^{N-2}v\ 
\end{equation*}%
and 
\begin{equation*}
L_{g}(w)=%
\mu
_{2}u^{N-2}w
\end{equation*}%
where 
\begin{equation*}
\widehat{%
\mu
_{1}}=\lim \lambda _{1,m}\leq \ 
\mu
_{2}\text{.}\ 
\end{equation*}

\textit{Step2.}

Now we show $\ v_{m}\rightarrow v$\ , $w_{m}\rightarrow w$\ \ strongly in $%
H_{1}^{2.}\left( M\right) $. Applying Theorem \ref{th10} to the sequence $%
v_{m}-v$, we get

\begin{equation*}
\int_{M}|u_{m}|^{N-2}(v_{m}-v)^{2}dv_{g}\leq (2^{-\frac{2}{n}%
}K^{2}\int_{M}|\nabla
(v_{m}-v)|^{2}dv_{g}+\int_{M}B_{0}(v_{m}-v)^{2}dv_{g})(\dint%
\limits_{M}u^{N}dv_{g})^{\frac{2}{n}}
\end{equation*}%
and since $v_{m}\rightarrow v$\ \ strongly in $L^{2}$\ ,

\begin{equation*}
\int_{M}|u_{m}|^{N-2}(v_{m}-v)^{2}dv_{g}\leq (2^{-\frac{2}{n}%
}K^{2}\int_{M}|\nabla (v_{m}-v)|^{2}dv_{g}+o(1)
\end{equation*}%
\begin{equation*}
\leq (2^{-\frac{2}{n}}K^{2}\int_{M}|\nabla (v_{m})|^{2}+\left\vert \nabla
v\right\vert ^{2}-2\nabla v_{m}\nabla vdv_{g}+o(1)\text{.}
\end{equation*}%
By the weak convergence of $\left( v_{m}\right) $\ , $\int_{M}\nabla
v_{m}\nabla vdv_{g}=\int_{M}\left\vert \nabla v\right\vert ^{2}dv_{g}+o(1)$

\begin{equation*}
\int_{M}|u_{m}|^{N-2}(v_{m}-v)^{2}dv_{g}\leq (2^{-\frac{2}{n}%
}K^{2}\int_{M}|\nabla (v_{m})|^{2}-\left\vert \nabla v\right\vert
^{2}dv_{g}+o(1)
\end{equation*}%
and since 
\begin{equation*}
\ \int_{M}\frac{n-2}{4(n-1)}S_{g}v_{m}{}^{2}dv_{g}=\int_{M}\frac{n-2}{4(n-1)}%
S_{g}v^{2}dv_{g}+0(1)
\end{equation*}%
we get

\begin{equation*}
\int_{M}|u_{m}|^{N-2}(v_{m}-v)^{2}dv_{g}\leq 2^{-\frac{2}{n}%
}K^{2}(\int_{M}|\nabla (v_{m})|^{2}-\left\vert \nabla v\right\vert ^{2}dv_{g}
\end{equation*}

\begin{equation*}
+\int_{M}\frac{n-2}{4(n-1)}S_{g}(v_{m}^{2}{}-v^{2})dv_{g})+o(1)\leq 2^{-%
\frac{2}{n}}K^{2}(\int_{M}v_{m}L_{g}(v_{m})-vL_{g}(v)dv_{g})+o(1)
\end{equation*}

\begin{equation*}
\leq 2^{-\frac{2}{n}}K^{2}(\lambda _{1,m}-\widehat{%
\mu
_{1}}\int_{M}u^{N-2}v^{2}dv_{g})+o(1)
\end{equation*}%
By the fact \ $\widehat{%
\mu
_{1}}=\lim \lambda _{1,m}\leq $\ $%
\mu
_{2}$

\begin{equation*}
\leq 2^{-\frac{2}{n}}K^{2}%
\mu
_{2}(1-\int_{M}u^{N-2}v^{2}dv_{g})+o(1)
\end{equation*}%
Then

\begin{equation*}
\int_{M}|u_{m}|^{N-2}(v_{m}-v)^{2}dv_{g}\leq 2^{-\frac{2}{n}}K^{2}%
\mu
_{2}(1-\int_{M}u^{N-2}v^{2})dv_{g}+o(1)
\end{equation*}%
Now using the weak convergence of $\left( v_{m}\right) $\ in $\
H_{1}^{2}\left( M\right) $ and the weak convergence of $\left( u_{m}\right) $%
\ \ in $L^{N}\left( M\right) $, we infer by Lemma \ref{lem8} that

\begin{equation*}
\int_{M}|u_{m}|^{N-2}(v_{m}-v)^{2}dv_{g}=1-\int_{M}u^{N-2}v^{2}dv_{g}+o(1)
\end{equation*}%
then

\begin{equation*}
1-\int_{M}u^{N-2}v^{2}dv_{g}\leq 2^{-\frac{2}{n}}K^{2}%
\mu
_{2}(1-\int_{M}u^{N-2}v^{2})dv_{g}+o(1)
\end{equation*}

and%
\begin{equation*}
1-2^{-\frac{2}{n}}K^{2}%
\mu
_{2}\leq (1-2^{-\frac{2}{n}}K^{2}%
\mu
_{2})\int_{M}u^{N-2}v^{2}dv_{g}+o(1)\text{.}
\end{equation*}%
So if $\ \ 1-2^{-\frac{2}{n}}K^{2}%
\mu
_{2}>0$\ then 
\begin{equation*}
\int_{M}u^{N-2}v^{2}dv_{g}\geq 1\text{.}
\end{equation*}%
and by Fatou's lemma, we obtain

\begin{equation*}
\int_{M}u^{N-2}v^{2}dv_{g}\leq \underline{\lim }\
\int_{M}u_{m}^{N-2}v_{m}^{2}dv_{g}=1\text{.}
\end{equation*}%
We find that 
\begin{equation}
\int_{M}u^{N-2}v^{2}dv_{g}=1\text{.}  \tag{16}  \label{16}
\end{equation}%
So $u$ and $v$\ are not trivial.

Moreover%
\begin{equation*}
\int_{M}\left\vert \nabla (v_{m}-v)\right\vert ^{2}dv_{g}=\int_{M}\left(
|\nabla (v_{m})|^{2}+\left\vert \nabla v\right\vert ^{2}-2\nabla v_{m}\nabla
v\right) dv_{g}
\end{equation*}

\begin{equation*}
=\int_{M}|\nabla (v_{m})|^{2}-\left\vert \nabla v\right\vert ^{2}dv_{g}+o(1)
\end{equation*}

and since $\ \int_{M}S_{g}\left( v_{m}{}^{2}-v^{2}\right) dv_{g}=o(1)$, we
get%
\begin{equation*}
\int_{M}\left\vert \nabla (v_{m}-v)\right\vert
^{2}dv_{g}=\int_{M}v_{m}L_{g}(v_{m})-vL_{g}(v)dv_{g}+o(1)
\end{equation*}

\begin{equation*}
\leq 
\mu
_{2}(1-\int_{M}u^{N-2}v^{2}dv_{g})+o(1)
\end{equation*}%
Then, by relation (\ref{16})

\begin{equation*}
\int_{M}\left\vert \nabla (v_{m}-v)\right\vert ^{2}dv_{g}=o(1)
\end{equation*}%
and $v_{m}\rightarrow v$\ strongly in $H_{1}^{2}\left( M\right) .$The same
argument holds with $\left( w_{m}\right) $, hence $w_{m}\rightarrow w$\
strongly in $H_{1}^{2}\left( M\right) $ and $\int_{M}u^{N-2}w^{2}dv_{g}=1$.\ 

To show that $\int_{M}u^{N-2}vwdv_{g}=0$, first writing and using H\^{o}%
lder's inequality, we get

\begin{equation*}
\int_{M}\left( u_{m}^{N-2}v_{m}w_{m}-u^{N-2}vw\right) dv_{g}=\int_{M}\left(
u_{m}^{N-2}v_{m}w_{m}-u_{m}^{N-2}vw_{m}+u_{m}^{N-2}vw_{m}-u^{N-2}vw\right)
dv_{g}
\end{equation*}

\begin{equation*}
=\int_{M}u_{m}^{N-2}(v_{m}-v)w_{m}dv_{g}+\int_{M}\left(
u_{m}^{N-2}vw_{m}-u^{N-2}vw\right) dv_{g}
\end{equation*}

\begin{equation*}
=\int_{M}u_{m}^{\frac{N-2}{2}}w_{m}[u_{m}^{\frac{N-2}{2}}(v_{m}-v)]dv_{g}+%
\int_{M}\left( u_{m}^{N-2}vw_{m}-u^{N-2}vw\right) dv_{g}
\end{equation*}

\begin{equation*}
\leq \left( \int_{M}u_{m}^{N-2}w_{m}^{2}dv_{g}\right) ^{\frac{1}{2}}\left(
\int_{M}u_{m}^{N-2}(v_{m}-v)^{2}dv_{g}\right) ^{^{\frac{1}{2}%
}}+\int_{M}\left( u_{m}^{N-2}vw_{m}-u^{N-2}vw\right) dv_{g}
\end{equation*}

\begin{equation*}
\leq \left( \int_{M}u_{m}^{N-2}(v_{m}-v)^{2}dv_{g}\right) ^{\frac{1}{2}%
}+\int_{M}\left( u_{m}^{N-2}vw_{m}-u^{N-2}vw\right) dv_{g}
\end{equation*}

\begin{equation*}
\leq \left[ \left( \int_{M}u_{m}^{N-2\frac{N}{N-2}}dv_{g}\right) ^{\frac{N-2%
}{N}}\left( \int_{M}\left\vert v_{m}-v\right\vert ^{N}dv_{g}\right) ^{\frac{2%
}{N}}\right] ^{\frac{1}{2}}+\int_{M}\left(
u_{m}^{N-2}vw_{m}-u^{N-2}vw\right) dv_{g}
\end{equation*}

\begin{equation*}
\leq \left( \int_{M}\left\vert v_{m}-v\right\vert ^{N}dv_{g}\right) ^{\frac{1%
}{N}}+\int_{M}\left( u_{m}^{N-2}vw_{m}-u^{N-2}vw\right) dv_{g}
\end{equation*}

\begin{equation*}
\leq \left( \int_{M}\left\vert v_{m}-v\right\vert ^{N}dv_{g}\right) ^{\frac{1%
}{N}}+\int_{M}\left(
u_{m}^{N-2}vw_{m}-u_{m}^{N-2}vw+u_{m}^{N-2}vw-u^{N-2}vw\right) dv_{g}
\end{equation*}

\begin{equation*}
\leq \left( \int_{M}\left\vert v_{m}-v\right\vert ^{N}dv_{g}\right) ^{\frac{1%
}{N}}+\int_{M}\left( u_{m}^{N-2}v(w_{m}-w)+(u_{m}^{N-2}-u^{N-2})vw\right)
dv_{g}
\end{equation*}

\begin{equation*}
\leq \left( \int_{M}\left\vert v_{m}-v\right\vert ^{N}dv_{g}\right) ^{\frac{1%
}{N}}+\int_{M}\left( (u_{m}^{\frac{N-2}{2}}v)(u_{m}^{\frac{N-2}{2}%
}(w_{m}-w))+(u_{m}^{N-2}-u^{N-2})vw\right) dv_{g}
\end{equation*}

\begin{equation*}
\leq \left( \int_{M}\left\vert v_{m}-v\right\vert ^{N}dv_{g}\right) ^{\frac{1%
}{N}}+\left( \int_{M}u_{m}^{N-2}v^{2}dv_{g}\right) ^{\frac{1}{2}}\left(
\int_{M}u_{m}^{N-2}(w_{m}-w)^{2}dv_{g}\right) ^{\frac{1}{2}%
}+\int_{M}(u_{m}^{N-2}-u^{N-2})vwdv_{g}
\end{equation*}

\begin{equation*}
\leq \left( \int_{M}\left\vert v_{m}-v\right\vert ^{N}dv_{g}\right) ^{\frac{1%
}{N}}+\left( \int_{M}u_{m}^{N-2}v^{2}dv_{g}\right) ^{\frac{1}{2}}\left(
\int_{M}\left\vert w_{m}-w\right\vert )^{N}dv_{g}\right) ^{\frac{1}{N}%
}+\int_{M}(u_{m}^{N-2}-u^{N-2})vwdv_{g}\text{.}
\end{equation*}

Now noting that \ 
\begin{equation*}
\int_{M}u_{m}^{N-2}v^{2}dv_{g}\leq (\int_{M}u_{m}^{N}dv_{g})^{\frac{N-2}{2}%
}(\int_{M}v^{N}dv_{g})^{\frac{2}{N}}<+\infty
\end{equation*}

and taking account of $u_{m}^{N-2}\rightarrow u^{N-2}$ weakly in $L^{\frac{N%
}{N-2}}\left( M\right) $ and the fact that $vw\in L^{\frac{N}{2}}\left(
M\right) $, we deduce%
\begin{equation*}
\int_{M}(u_{m}^{N-2}-u^{N-2})vwdv_{g}\rightarrow 0
\end{equation*}%
\ hence 
\begin{equation*}
\int_{M}u^{N-2}vwdv_{g}=0\text{.}
\end{equation*}

Consequently the generalized metric $u^{N-2}g$\ minimizes $%
\mu
_{2}$.
\end{proof}

\bigskip

\begin{theorem}
If $%
\mu
_{2}<K^{-2}$, then generalized metric $u^{N-2}g$\ minimizes $%
\mu
_{2}$
\end{theorem}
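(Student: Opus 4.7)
The plan is to mimic the argument of the preceding theorem, but with the refined Sobolev inequality (Theorem \ref{th10}) replaced by the unweighted Sobolev inequality (Theorem \ref{th11}). The factor $2^{-2/n}$ that tightened the earlier threshold disappears, so convergence is obtained precisely under the condition $\mu_2 < K^{-2}$.

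First I would take a minimizing sequence $g_m = u_m^{N-2}g$ with $u_m \in H_2^p(M)$, $u_m > 0$, $\int_M u_m^N\,dv_g = 1$, and $\lambda_{2,m} \to \mu_2$. Weak compactness yields $u_m \rightharpoonup u$ in $L^N(M)$ with $u \in L_+^N(M)$. Proposition \ref{prop2} then supplies $v_m \geq 0$ and $w_m$ in $H_1^2(M)$ satisfying $L_g v_m = \lambda_{1,m} u_m^{N-2} v_m$ and $L_g w_m = \lambda_{2,m} u_m^{N-2} w_m$, with the normalizations $\int_M u_m^{N-2} v_m^2\,dv_g = \int_M u_m^{N-2} w_m^2\,dv_g = 1$ and $\int_M u_m^{N-2} v_m w_m\,dv_g = 0$. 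Coercivity of $L_g$ gives uniform $H_1^2$ bounds, so after extraction $v_m \rightharpoonup v$ and $w_m \rightharpoonup w$ in $H_1^2(M)$, and the weak limits satisfy $L_g v = \widehat{\mu_1}\, u^{N-2} v$ and $L_g w = \mu_2\, u^{N-2} w$, where $\widehat{\mu_1} = \lim_m \lambda_{1,m} \leq \mu_2$.

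The crux would be to upgrade to strong $H_1^2$ convergence. By H\"older's inequality and $\int_M u_m^N\,dv_g = 1$,
\begin{equation*}
\int_M u_m^{N-2}(v_m - v)^2\,dv_g \leq \|v_m - v\|_N^2,
\end{equation*}
and Theorem \ref{th11} combined with $v_m \to v$ strongly in $L^2(M)$ gives $\|v_m - v\|_N^2 \leq K^2 \|\nabla(v_m - v)\|_2^2 + o(1)$. Expanding $\|\nabla(v_m - v)\|_2^2$ via weak $H_1^2$ convergence and using $\int_M S_g(v_m^2 - v^2)\,dv_g = o(1)$ (since $2p/(p-1) < N$), one obtains
\begin{equation*}
\|\nabla(v_m - v)\|_2^2 = \lambda_{1,m} - \widehat{\mu_1}\int_M u^{N-2} v^2\,dv_g + o(1),
\end{equation*}
which by $\widehat{\mu_1} \leq \mu_2$ and $\int_M u^{N-2}v^2\,dv_g \leq 1$ (Fatou) is bounded above by $\mu_2\bigl(1 - \int_M u^{N-2} v^2\,dv_g\bigr) + o(1)$. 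Invoking Lemma \ref{lem8} to rewrite the left-hand side of the first display then produces the dichotomy
\begin{equation*}
\bigl(1 - K^2 \mu_2\bigr)\Bigl(1 - \int_M u^{N-2} v^2\,dv_g\Bigr) \leq o(1),
\end{equation*}
so the hypothesis $\mu_2 < K^{-2}$ forces $\int_M u^{N-2} v^2\,dv_g = 1$ and $v_m \to v$ strongly in $H_1^2(M)$. The identical argument with $\lambda_{2,m} \to \mu_2$ in place of $\lambda_{1,m}$ yields strong convergence $w_m \to w$ and $\int_M u^{N-2} w^2\,dv_g = 1$.

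Finally, the orthogonality $\int_M u^{N-2} v w\,dv_g = 0$ passes to the limit via the telescoping decomposition of $u_m^{N-2} v_m w_m - u^{N-2} v w$, together with the weak convergences $u_m^{N-2} \rightharpoonup u^{N-2}$ in $L^{N/(N-2)}(M)$ and $w_m \to w$ strongly in $L^N(M)$, exactly as in the preceding theorem. Consequently $u, v, w$ realize the infimum defining $\mu_2$, and the generalized metric $u^{N-2}g$ attains $\mu_2$. The main obstacle is reconciling the two sides of $\int_M u^{N-2} v^2\,dv_g = 1$ without any pointwise control on $u$: Fatou supplies the upper bound, Lemma \ref{lem8} the lower, and the two match precisely under the threshold $\mu_2 < K^{-2}$.
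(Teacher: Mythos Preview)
Your proposal is correct and follows essentially the same route as the paper: a minimizing sequence normalized to unit volume, weak limits via Proposition~\ref{prop2} and coercivity, then the key step of combining H\"older with the unweighted Sobolev inequality and Lemma~\ref{lem8} to force $\int_M u^{N-2}v^2\,dv_g = 1$ under the threshold $\mu_2 < K^{-2}$, from which strong $H_1^2$ convergence and the orthogonality in the limit follow. The only cosmetic difference is that the paper cites Theorem~\ref{th5} (the $\varepsilon$-Sobolev inequality) rather than Theorem~\ref{th11}, and omits rewriting the orthogonality argument, referring implicitly to the preceding theorem---your version is in fact slightly more explicit.
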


\begin{proof}
\textit{Step1.}

We study a sequence of metrics $g_{m}=u_{m}^{N-2}g$\ with $u_{m}\in
H_{2}^{p}\left( M\right) $, $u_{m}>0$ which attained $\ 
\mu
_{2}$\ i.e. a sequence of metrics such that

\begin{equation*}
\mu
_{2}=\lim_{m}\lambda _{2,m}(Vol(M,g_{m})^{2/n}\text{.}
\end{equation*}%
Without loss of generality, we may assume that $Vol(M,g_{m})=1$\ i.e. $%
\dint\limits_{M}u_{m}^{N}dv_{g}=1.$In particular, the sequence $\left(
u_{m}\right) _{m}$\ is bounded in $L^{N}$\ $\left( M\right) $and there
exists $u\in L^{N}\left( M\right) $, $\ u\geq 0$\ \ such that $\
u_{m}\rightarrow u$\ weakly in $L^{N}\left( M\right) $.We are going to prove
that the metric $\ u^{N-2}g$\ minimizes\ $\ 
\mu
_{2}$. Proposition \ref{prop2} and Theorem \ref{6} imply the existence of $\
v_{m},$ $w_{m}\in C^{1-\left[ \frac{n}{p}\right] ,\beta }$, with $\beta \in
\left( 0,1\right) \left( M\right) $, $v_{m}>0$\ such that 
\begin{equation*}
L_{g}(v_{m})=\lambda _{1,m}u_{m}^{N-2}v_{m}
\end{equation*}%
\ \ 
\begin{equation*}
L_{g}(w_{m})=\lambda _{2,m}u_{m}^{N-2}w_{m}
\end{equation*}%
\ and $\ $%
\begin{equation*}
\int_{M}u_{m}^{N-2}v_{m}^{2}dv_{g}=\int_{M}u_{m}^{N-2}w_{m}^{2}dv_{g}=1\text{%
, }\int_{M}u_{m}^{N-2}v_{m}w_{m}dv_{g}=0\text{.}
\end{equation*}%
The sequences $\left( v_{m}\right) _{m}$and $\left( w_{m}\right) _{m}$\ \
are bounded in $H_{1}^{2}$, we can find $\ v,w\in H_{1}^{2}$ with $v\geq 0$
\ such that $v_{m}\rightarrow v$, $w_{m}\rightarrow w$\ \ weakly in $\
H_{1}^{2}$.Together with the weak convergence of $\ \left( u_{m}\right) _{m}$%
, we get in the weak sense%
\begin{equation*}
L_{g}(v)=\widehat{%
\mu
_{1}}u^{N-2}v\ 
\end{equation*}%
and%
\begin{equation*}
L_{g}(w)=%
\mu
_{2}u^{N-2}w
\end{equation*}%
where 
\begin{equation*}
\widehat{%
\mu
_{1}}=\lim \lambda _{1,m}\leq \ 
\mu
_{2}\text{.}\ 
\end{equation*}%
\textit{Step2}.

Now we are going to show that $v_{m}\rightarrow v$\ , $w_{m}\rightarrow w$\
\ strongly in $H_{1}^{2.}$.

By H\"{o}lder's inequality, Theorem \ref{th5}, strong convergence of $v_{m}$
in $L^{2.}$,we get 
\begin{equation*}
\int_{M}|u_{m}|^{N-2}(v_{m}-v)^{2}dv_{g}\leq \Vert v_{m}-v\Vert _{N}^{2}\leq
K^{2}\Vert \nabla (v_{m}-v)\Vert _{2}^{2}+o(1)
\end{equation*}

\begin{equation*}
\leq K^{2}\int_{M}|\nabla (v_{m})|^{2}+\left\vert \nabla v\right\vert
^{2}-2\nabla v_{m}\nabla vdv_{g}+o(1)
\end{equation*}

\begin{equation*}
\leq K^{2}\int_{M}|\nabla (v_{m})|^{2}-\left\vert \nabla v\right\vert
^{2}dv_{g}+o(1)
\end{equation*}

\begin{equation*}
\leq K^{2}\int_{M}v_{m}L_{g}(v_{m})-vL_{g}(v)dv_{g}+o(1)
\end{equation*}

\begin{equation*}
\leq K^{2}%
\mu
_{2}(1-\int_{M}u^{N-2}v^{2}dv_{g})+o(1)
\end{equation*}

and with Lemma \ref{lem8} 
\begin{equation*}
\int_{M}|u_{m}|^{N-2}(v_{m}-v)^{2}dv_{g}=1-\int_{M}u^{N-2}v^{2}dv_{g}+o(1)
\end{equation*}

then%
\begin{equation*}
1-\int_{M}u^{N-2}v^{2}dv_{g}\leq K^{2}%
\mu
_{2}(1-\int_{M}u^{N-2}v^{2}dv_{g})+o(1)
\end{equation*}

i.e%
\begin{equation*}
1-K^{2}%
\mu
_{2}\leq (1-K^{2}%
\mu
_{2})\int_{M}u^{N-2}v^{2}dv_{g}
\end{equation*}

so if $1-K^{2}%
\mu
_{2}>0$ , 
\begin{equation*}
\int_{M}u^{N-2}v^{2}dv_{g}\geq 1\text{.}
\end{equation*}%
On the other hand since by Fatou's lemma

\begin{equation*}
\int_{M}u^{N-2}v^{2}dv_{g}\leq \underline{\lim }\
\int_{M}u_{m}^{N-2}v_{m}^{2}dv_{g}=1\text{.}
\end{equation*}%
Then%
\begin{equation*}
\int_{M}u^{N-2}v^{2}dv_{g}=1.
\end{equation*}
and

\begin{equation*}
\int_{M}\left\vert \nabla (v_{m}-v)\right\vert ^{2}dv_{g}=o(1)
\end{equation*}

Hence $v_{m}\rightarrow v$\ \ strongly in $H_{1}^{2.}\subset L^{N}$.

The same conclusion also holds for $\left( w_{m}\right) _{m}$.
\end{proof}

\begin{lemma}
\label{lem9} Let $\ u\in L^{N}$ with $\dint\limits_{M}u^{N}dv_{g}=1$ and $z$ 
$,w$ \ nonnegative functions in $H_{1}^{2}$satisfying 
\begin{equation}
\int_{M}wL_{g}(w)dv_{g}\leq 
\mu
_{2}\int_{M}u^{N-2}w^{2}dv_{g}  \tag{20}  \label{20}
\end{equation}%
and%
\begin{equation}
\int_{M}zL_{g}(z)dv_{g}\leq 
\mu
_{2}\int_{M}u^{N-2}z^{2}dv_{g}  \tag{21}  \label{21}
\end{equation}%
And suppose that $(M-z^{-1}(0))\cap (M-w^{-1}(0))$\ has measure zero. Then $%
u $ \ is a linear combination of $\ z$\ \ and $\ w$\ \ and we have equality
in (\ref{20}) and (\ref{21}).
\end{lemma}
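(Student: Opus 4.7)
The proof proceeds in two stages. First I derive the equalities in (20)--(21); then, using the resulting minimality of $\tilde{g}=u^{N-2}g$, a first-order perturbation argument forces $u$ into $\mathrm{span}(z,w)$.

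\emph{Stage 1 (equalities).} I would test $\lambda_{2,\tilde g}$ against the two-dimensional subspace $V=\mathrm{span}(z,w)\subset H_{1}^{2}(M)$. The essentially disjoint supports give $zw=0$ a.e.\ and, via the standard $H^{1}$ fact that a weakly differentiable function has zero gradient a.e.\ on its zero set, also $\nabla z\cdot\nabla w=0$ a.e., so $\int zL_{g}w\,dv_{g}=0$. The Rayleigh quotient therefore diagonalizes on $V$:
\[
R_{u}(\alpha z+\beta w)=\frac{\alpha^{2}\int zL_{g}z+\beta^{2}\int wL_{g}w}{\alpha^{2}\int u^{N-2}z^{2}+\beta^{2}\int u^{N-2}w^{2}}\leq\mu_{2}
\]
by (20)--(21). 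Assuming $\int u^{N-2}z^{2}$ and $\int u^{N-2}w^{2}$ are both positive (otherwise the coerciveness of $L_{g}$ for $\mu>0$ trivializes the hypothesis), we have $V\in Gr_{2}^{u}(H_{1}^{2})$, so $\lambda_{2,\tilde g}\leq\sup_{V}R_{u}\leq\mu_{2}$. Since $\mathrm{Vol}(M,\tilde g)=\int u^{N}dv_{g}=1$ and the infimum defining $\mu_{2}$ extends to generalized metrics (by density of smooth conformal factors in $L^{N}_{+}$), one also has $\lambda_{2,\tilde g}\geq\mu_{2}$. Hence these inequalities become equalities, and the diagonal formula forces $R_{u}(z)=R_{u}(w)=\mu_{2}$, i.e.\ equality in (20) and (21).

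\emph{Stage 2 (perturbation).} Since $\tilde g$ now attains the infimum $\mu_{2}$ among generalized metrics, I would perturb $u$ by $\phi\in L^{\infty}(M)$ supported on $\{u>0\}$ with $\int u^{N-1}\phi\,dv_{g}=0$, so that $\int(u+\epsilon\phi)^{N}dv_{g}=1+O(\epsilon^{2})$. Keeping $V$ as a test subspace and computing
\[
\tfrac{d}{d\epsilon}\big|_{\epsilon=0}R_{u+\epsilon\phi}(v)=-(N-2)\mu_{2}\frac{\int u^{N-3}\phi v^{2}\,dv_{g}}{\int u^{N-2}v^{2}\,dv_{g}},
\]
first-order optimality $\lambda_{2,(u+\epsilon\phi)^{N-2}g}\geq\mu_{2}+O(\epsilon^{2})$, applied with $\pm\phi$, gives (using $v^{2}=\alpha^{2}z^{2}+\beta^{2}w^{2}$ on $V$) the sign constraint $I_{z}(\phi)I_{w}(\phi)\leq 0$ for every admissible $\phi$, where $I_{z}(\phi)=\int u^{N-3}\phi z^{2}\,dv_{g}$ and $I_{w}(\phi)=\int u^{N-3}\phi w^{2}\,dv_{g}$.

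\emph{Stage 3 (convex separation).} The image of the linear map $\phi\mapsto(I_{z}(\phi),I_{w}(\phi))$ on the hyperplane $H=\{\phi:\int u^{N-1}\phi=0\}$ is a linear subspace of $\mathbb{R}^{2}$ lying in $\{(a,b):ab\leq 0\}$, hence at most one-dimensional. After ruling out the trivial and axis-aligned directions (each would force $z\propto u$ or $w\propto u$ on $\{u>0\}$, contradicting disjoint supports via coerciveness), the remaining non-axis direction produces constants $\lambda,\nu,c>0$ with $\lambda z^{2}+\nu w^{2}=cu^{2}$ a.e.\ on $\{u>0\}$. A truncation argument (restricting $u$ to $\mathrm{supp}(z)\cup\mathrm{supp}(w)$ preserves $\int u^{N-2}z^{2}$ and $\int u^{N-2}w^{2}$ while strictly decreasing $\int u^{N}$, contradicting the minimality of $\tilde g$) shows $\{u>0\}\subseteq\mathrm{supp}(z)\cup\mathrm{supp}(w)$, and the disjoint supports then give $u=\sqrt{\lambda/c}\,z+\sqrt{\nu/c}\,w$. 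The hardest step will be making the first-order perturbation rigorous---existence of the derivative of $\lambda_{2}$ in $u$, stability of $V\in Gr_{2}^{u+\epsilon\phi}$, and the case analysis required to exclude the axis-aligned degeneracies in the separation step.
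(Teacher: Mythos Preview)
Your Stage~1 contains a genuine gap. From $\lambda_{2,\tilde g}=\sup_{V}R_{u}=\mu_{2}$ together with the diagonalization $\sup_{V}R_{u}=\max\bigl(R_{u}(z),R_{u}(w)\bigr)$ you can only conclude that \emph{at least one} of $R_{u}(z),R_{u}(w)$ equals $\mu_{2}$; nothing prevents, say, $R_{u}(z)<\mu_{2}$ strictly while $R_{u}(w)=\mu_{2}$. Your Stage~2 derivative formula then already presupposes both equalities (you write $R_{u}(v)=\mu_{2}$ for every $v\in V$), so the argument does not close. The gap is repairable by a separate perturbation (shift mass from $\mathrm{supp}(z)$ to $\mathrm{supp}(w)$ at first order), but you do not do this, and once you add it the proof becomes even more involved.

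More to the point, the route the paper follows (Ammann--Humbert) is far shorter and avoids perturbation theory entirely. The key idea you are missing is to test $\mu_{2}$ not with $u$ but with the competitor $\tilde u=az+bw$. Because $zw=0$ a.e., one has $\tilde u^{\,N-2}z^{2}=a^{N-2}z^{N}$, $\tilde u^{\,N-2}w^{2}=b^{N-2}w^{N}$ and $\tilde u^{\,N}=a^{N}z^{N}+b^{N}w^{N}$; optimizing in $a,b$ yields
\[
\mu_{2}\ \le\ \Bigl(\bigl(\tfrac{\int zL_{g}z}{\|z\|_{N}^{2}}\bigr)^{n/2}+\bigl(\tfrac{\int wL_{g}w}{\|w\|_{N}^{2}}\bigr)^{n/2}\Bigr)^{2/n}.
\]
On the other hand, (20) and H\"older on $\mathrm{supp}(z)$ give $\int zL_{g}z\le\mu_{2}\,U_{z}^{2/n}\|z\|_{N}^{2}$ with $U_{z}=\int_{\mathrm{supp}(z)}u^{N}$, and likewise for $w$; since $U_{z}+U_{w}\le\int u^{N}=1$, the displayed right-hand side is $\le\mu_{2}$. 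Equality everywhere then forces, in one stroke, equality in (20) and (21), equality in both H\"older steps (so $u$ is proportional to $z$ on $\mathrm{supp}(z)$ and to $w$ on $\mathrm{supp}(w)$), and $U_{z}+U_{w}=1$ (so $u$ vanishes off $\mathrm{supp}(z)\cup\mathrm{supp}(w)$). No differentiability of $\lambda_{2}$, no convex separation, no case analysis.
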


\begin{proof}
\bigskip The proof is the same as that of Aummann and Humbert in \cite{1}$.$
\end{proof}

\begin{theorem}
If a generalized metric $\ u^{N-2}g$\ minimizes $%
\mu
_{2}$, then there exist a nodal solution $w\in H_{2}^{p}\subset
C^{1-[n/p],\beta }$

\ of equation 
\begin{equation}
L_{g}(w)=%
\mu
_{2}u^{N-2}w\   \tag{22}  \label{22}
\end{equation}%
More over there exist$\ a,b>0$\ such that 
\begin{equation*}
u=aw_{+}+bw_{-}
\end{equation*}
\end{theorem}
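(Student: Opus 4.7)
The plan is to adapt the argument of Ammann--Humbert \cite{1} to the singular setting. First I normalize the minimizer so that $\int_M u^N\,dv_g=1$, whence $\lambda_{2,\bar g}=\mu_2$. Proposition \ref{prop2} then produces functions $v,w\in H_1^2(M)$ with $v\geq 0$, satisfying weakly
\begin{equation*}
L_g v=\lambda_{1,\bar g}u^{N-2}v,\qquad L_gw=\mu_2u^{N-2}w,
\end{equation*}
normalized by $\int_Mu^{N-2}v^2\,dv_g=\int_Mu^{N-2}w^2\,dv_g=1$ and $\int_Mu^{N-2}vw\,dv_g=0$. Since $v\geq 0$ is nontrivial on $\{u>0\}$, the orthogonality against $v$ combined with $\int u^{N-2}w^2=1$ forces $w$ to change sign on a set of positive $u^{N-2}$-measure, so $w_+=\sup(w,0)$ and $w_-=\sup(-w,0)$ are both nontrivial in $H_1^2(M)$, with disjoint supports.

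Next I test the weak equation for $w$ against $w_+$ and $w_-\in H_1^2(M)$. Using the standard chain-rule identities $\nabla w\cdot\nabla w_\pm=\pm|\nabla w_\pm|^2$ and $w\cdot w_\pm=\pm w_\pm^2$ a.e., these computations collapse to
\begin{equation*}
\int_M w_\pm L_g w_\pm\,dv_g=\mu_2\int_M u^{N-2}w_\pm^2\,dv_g.
\end{equation*}
The disjoint-supports condition $\{w_+>0\}\cap\{w_->0\}=\emptyset$ verifies the hypothesis of Lemma \ref{lem9} (applied with $z=w_+$, $w=w_-$), yielding $u=aw_++bw_-$ for some constants. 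Non-negativity of $u,w_+,w_-$ on their disjoint supports gives $a,b\geq 0$; strict positivity uses the coercivity of $L_g$ supplied by Proposition \ref{pro1} when $\mu>0$: if $a=0$ then $u\equiv 0$ on $\{w>0\}$, so $\int u^{N-2}w_+^2=0$ and the displayed identity forces $\int w_+L_gw_+=0$, contradicting $w_+\not\equiv 0$. A symmetric argument handles $b$.

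For the regularity, substituting $u=aw_++bw_-$ and exploiting the disjoint supports rewrites the equation as
\begin{equation*}
L_g w=\mu_2\bigl(a^{N-2}w_+^{N-1}-b^{N-2}w_-^{N-1}\bigr).
\end{equation*}
An approximation of $\chi_{\{w>0\}}$ by smooth cut-offs $\eta_\varepsilon(w)$, together with the fact that $\nabla w=0$ a.e.\ on $\{w=0\}$ for $w\in H_1^2$, shows that $w_+$ and $w_-$ separately satisfy the Yamabe-type equations $L_g w_\pm=\mu_2 c_\pm^{N-2} w_\pm^{N-1}$ weakly on $M$ (with $c_+=a$, $c_-=b$). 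The Brezis--Kato bootstrap of Lemma \ref{lem7} followed by Theorem \ref{th6} applied to $w_\pm$ then places $w_\pm\in H_2^p(M)\subset C^{1-[n/p],\beta}(M)$, hence $w=w_+-w_-\in C^{1-[n/p],\beta}(M)$. The main obstacle is this last cut-off step: rigorously splitting the global weak equation for $w$ into separate equations for $w_\pm$ across the nodal set $\{w=0\}$ despite the limited regularity of $g$; once this reduction is in place, everything else is bookkeeping and direct citation of Lemma \ref{lem7} and Theorem \ref{th6}.
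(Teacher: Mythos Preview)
Your use of Proposition \ref{prop2} and Lemma \ref{lem9}, and your verification of $\int w_\pm L_g w_\pm=\mu_2\int u^{N-2}w_\pm^2$ by testing the weak equation for $w$ against $w_\pm$, are correct and match the paper's Step 1. Your argument for $a,b>0$ via coercivity is also fine (and in fact more explicit than what the paper writes).

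The real problem is your regularity step. The claim that $w_+$ and $w_-$ separately satisfy $L_g w_\pm=\mu_2 c_\pm^{N-2} w_\pm^{N-1}$ weakly on all of $M$ is false in general, and the cut-off argument you sketch cannot rescue it. Testing the equation for $w$ against $\eta_\varepsilon(w)\varphi$ produces, in addition to the terms you want, the cross term $\int_M \eta_\varepsilon'(w)\,|\nabla w|^2\,\varphi\,dv_g$. Since $\eta_\varepsilon'$ concentrates near $0$ with total mass of order $1$, this term does \emph{not} vanish in the limit: already for $w(x)=x$ on $\mathbb{R}$ it converges to $\varphi(0)\neq 0$, and in general its limit is a surface integral of $|\partial_\nu w|$ over the nodal set. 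Thus $L_g w_+$ differs from $\mu_2 a^{N-2}w_+^{N-1}$ by a nontrivial distribution supported on $\{w=0\}$, and Theorem \ref{th6} cannot be applied to $w_\pm$ individually.

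The paper's route is both simpler and sidesteps this obstruction. Once $u=aw_++bw_-$ is established, apply Lemma \ref{lem7} (together with its stated consequence that the solution lies in $L^s$ for every $s\geq 1$) directly to the \emph{original} equation $L_g w=\mu_2 u^{N-2}w$. This yields $w\in L^s$ for all $s$, hence $u=aw_++bw_-\in L^s$ for all $s$, hence $u^{N-2}\in L^p$. Then $h=\tfrac{n-2}{4(n-1)}S_g-\mu_2 u^{N-2}\in L^p(M)$, and linear $L^p$ elliptic regularity applied to $\Delta w+hw=0$ gives $w\in H_2^p\subset C^{1-[n/p],\beta}$ in one stroke. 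No splitting across the nodal set is needed.

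A smaller gap: your argument that orthogonality alone forces $w$ to change sign requires $v>0$ $u^{N-2}$-a.e., which Proposition \ref{prop2} does not give (only $v\geq 0$). The paper handles the nodal property separately (its Steps 2--3), by first showing $\mu_2>\mu_1$ and then deriving a contradiction from the equality clause of Lemma \ref{lem9}.
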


With $w_{+}=sup(w,0)$\ \ and $w_{-}=sup(-w,0$) .

\begin{proof}
\textit{Step1}. Applying Lemma \ref{lem9} to $w_{+}=sup(w,0)$\ \ and $%
w_{-}=sup(-w,0)$, we get the existence of $\ a,b>0$\ such that 
\begin{equation*}
u=aw_{+}+bw_{-}\text{.}
\end{equation*}%
Now by Lemma \ref{lem7}, $w_{+},w_{-}\in L^{\infty }$ i.e. $\ u\in L^{\infty
}$\ , $u^{N-2}\in L^{\infty }$, then 
\begin{equation*}
h=S_{g}-%
\mu
_{2}u^{N-2}\in L^{p}
\end{equation*}%
\ and from Theorem \ref{th6}, we obtain%
\begin{equation*}
w\in H_{2}^{p}\subset C^{1-[n/p],\beta }\text{.}
\end{equation*}

\textit{Step 2.} If $\ 
\mu
_{2}=%
\mu
_{1}$, we see that $|w|$\ is a minimizer to the functional associated to $%
\mu
_{1}$, then $|w|$\ satisfies the same equation as $v$\ \ and Theorem \ref%
{th6} shows that $|w|=w\in H_{2}^{p}$\ $\subset \subset C^{1-[n/p],\beta }$\
that is $|w|>0$\ everywhere, which contradicts the condition $(9)$ in
Proposition \ref{prop2} , then 
\begin{equation*}
\mu
_{2}>%
\mu
_{1}\text{.}\ 
\end{equation*}

\textit{Step3.} The solution $w$ \ of the equation (\ref{22}) changes sign.
Since if it does not, we may assume that $\ w\geq 0$, by step2 \ the
inequality in (\ref{20})\ is strict and by Lemma (\ref{lem9}) we have the
equality: a contradiction.
\end{proof}

\begin{remark}
Step1 shows that $u$ is not necessarily in $H_{2}^{p}\left( M\right) $ and
by the way the minimizing metric is not in $H_{2}^{p}(M,T^{\ast }M\otimes
T^{\ast }M)$ contrary to the Yamabe invariant with singularities.
\end{remark}

\end{document}